\numberwithin{equation}{section}
\theoremstyle{plain}
\newtheorem{Lemma}{Lemma}[section]
\newtheorem{Corollary}[Lemma]{Corollary}
\newtheorem*{Corollary*}{Corollary}
\newtheorem{Theorem}[Lemma]{Theorem}
\newtheorem*{Theorem*}{Theorem}
\theoremstyle{definition}
\newtheorem{Example}[Lemma]{Example}
\newtheorem{Remark}[Lemma]{Remark}
\newtheorem{Question}[Lemma]{Question}
\setlist[enumerate,1]{label=(\alph*),font=\upshape}
\setlist[enumerate,2]{label=(\roman*),font=\upshape}
\def\HH{\mathscr{H}}
\def\P{\mathscr{P}}
\def\C{\mathbb{C}}
\def\D{\mathbb{D}}
\def\T{\mathbb{T}}
\def\phi{\varphi}
\newcommand{\supess}{\operatorname{supess}}
\renewcommand{\dim}{\operatorname{dim}}
\newcommand{\beqa}{\begin{eqnarray*}}
\newcommand{\eeqa}{\end{eqnarray*}}
\renewcommand{\leq}{\leqslant}
\renewcommand{\subset}{\subseteq}
\title[Composition operators on de Branges--Rovnyak spaces ]{Composition operators on de Branges--Rovnyak spaces associated to a rational (not inner) function}
\author[Alhajj]{Rim Alhajj}
\address{Laboratoire Paul Painlev\'e, Universit\'e de Lille, 59 655 Villeneuve d'Ascq C\'edex }
\email{Rim.hajj@hotmail.com}
\author[Fricain]{Emmanuel Fricain}
 \address{Laboratoire Paul Painlev\'e, Universit\'e de Lille, 59 655 Villeneuve d'Ascq C\'edex }
 \email{emmanuel.fricain@univ-lille.fr}
\keywords{Composition operators, de Branges--Rovnyak spaces, angular derivatives, compactness}
\thanks{The authors were supported by Labex CEMPI (ANR-11-LABX-0007-01) and the project FRONT (ANR-17-CE40 - 0021).}
\subjclass[2010]{30J05, 30H10, 46E22}
\begin{document}

\begin{abstract}
In this paper, we characterize the boundedness, the compactness and the Hilbert--Schmidt property for composition operators acting from a de Branges--Rovnyak space $\HH(b)$ into itself, when $b$ is a rational function in the closed unit ball of $H^\infty$ (but not a finite Blaschke product). In particular, we extend some of the results obtained by D. Sarason and J.N. Silva in the context of local Dirichlet spaces. 
\end{abstract}

\maketitle

\section{Introduction}
In this paper, we study the composition operators $C_\varphi(f)=f\circ \varphi$ acting on a de Branges--Rovnyak space $\HH(b)$, associated to a function $b$ belonging to the closed unit ball of $H^\infty$ and satisfying $\log(1-|b|)\in L^1(\mathbb T)$. The de Branges--Rovnyak spaces $\HH(b)$ (see the precise definition in Section~\ref{sec2}) have been introduced by L. de Branges and J. Rovnyak in the context of model theory. A whole class of Hilbert space contractions is unitarily equivalent to the restriction of the backward shift operator to $\HH(b)$ for an appropriate $b$ belonging to the closed unit ball of $H^\infty$. \\

The study of composition operators has quite a recent history, but the literature on this subject has grown very quickly and many efforts have been dedicated to characterizing the standard spectral properties in various reproducing kernel Hilbert spaces \cite{MR1397026,MR1237406}. It finds its roots in the pioneering works of E. Nordgren \cite{MR223914} and H.J. Schwartz \cite{MR2618707} in the sixties. The idea is to connect the properties of the operator $C_\varphi$ (boundedness, compactness, Hilbert--Schmidt property,...) with the properties of its symbol $\varphi$. It is well--known that for every analytic self-map $\varphi$ of the open unit disk $\D$, the operator $C_\varphi$ maps boundedly the Hardy space $H^2$ of the unit disk into itself. This is known as the Littlewood subordination principle. The compactness is more subtle and has been characterized by J. Shapiro \cite{MR881273} in terms of the behavior of the Nevanlinna counting function $N_\varphi$ associated to the symbol $\varphi$. Recall that the Nevanlinna counting function $N_\varphi$ is a tool from value distribution theory and is defined by 
\[
N_\varphi(w)=-\sum_{\varphi(z)=w}\log|z|,\quad \mbox{if }w\in\varphi(\D)\setminus\{\varphi(0)\}\quad\mbox{and}\quad N_\varphi(w)=0,\quad \mbox{if }w\in\D\setminus\varphi(\D).
\]

In \cite{MR3411049}, Y. Lyubarskii and E. Malinnikova extended Shapiro's compactness criterion for composition operators $C_\varphi$ acting from $\HH(\Theta)=K_\Theta$ into $H^2$, where $K_\Theta=(\Theta H^2)^\perp$ is the so-called model space associated to an inner function $\Theta$. In \cite{MR3915413}, the second author with M. Karaki and J. Mashreghi generalized some of the results of Y. Lyubarskii and E. Malinnikova for $C_\varphi$ viewed as an operator from $\HH(b)$ into $H^2$, when $b$ is a function belonging to the closed unit ball of $H^\infty$.  Note that, since $\HH(b)$ is contractively contained into $H^2$ (and even is closed in $H^2$ if $b=\Theta$ is inner), then, according to the Littlewood subordination principle, the composition operator $C_\varphi$ always maps boundedly $\HH(b)$ into $H^2$. 

If we require that the operator $C_\varphi$ maps $\HH(b)$ into itself, the situation becomes dramatically more difficult and it imposes severe restrictions on the symbol $\varphi$. It has been studied by J. Mashreghi and M. Shabankhah in \cite{MR3176147,MR3001330} for model spaces $K_\Theta$, mainly when the inner function $\Theta$ is a finite Blaschke product, which implies that the space $K_\Theta$ is of finite dimension. In this case of course, the question of boundedness and compactness reduces to the question of knowing whether $f\circ\varphi\in K_\Theta$, for every $f\in K_\Theta$. \\

 Let us recall that when $b(z)=(1+z)/2$, $z\in\D$, the associated de Branges--Rovnyak space $\HH(b)$ coincides with the local Dirichlet space $\mathcal D(\delta_1)$ (with equivalence of the norms). More generally, if $b \in \operatorname{ball}(H^{\infty})$ is a rational function (and not a finite Blaschke product) such that its pythagorean mate $a$ (see Section 2 for the definition) has only simple zeros on $\T$, then it is known that $\HH(b)=\mathcal D(\mu)$ (with equivalence of the norms), where $\mu$ is a positive discrete measure supported on the set of the zeros of $a$ on $\mathbb T$. See \cite{MR3110499}. Here $\mathcal D(\mu)$ is the space of holomorphic functions on $\mathbb D$ whose derivatives are square-integrable when weighted against the Poisson integral of the measure $\mu$. In \cite{MR1945292}, D. Sarason and J.N. Silva studied the composition operators on $\mathcal D(\mu)$. They gave a criterion for the boundedness, the compactness and the Hilbert--Schmidt property of $C_\varphi$ on $\mathcal D(\delta_1)$ in terms of the behavior of a counting function appropriate for the space $\mathcal D(\delta_1)$. Of course, their results can be translated immediately in the context of $\HH(b)$ space where $b(z)=(1+z)/2$. \\

One of the main difficulties when we deal with $\HH(b)$ spaces is to check if a given function $f$ belongs or not to $\HH(b)$. Contrary to most of the classical spaces (Hardy space, Bergman space, Dirichlet space,...), the membership to $\HH(b)$ cannot be characterized directly by an integral condition (at least it is not known). That causes some difficulties to check if a composition operator $C_\varphi$ maps $\HH(b)$ into itself.  In this paper, we will restrict ourselves to the case when $b$ is a rational function in the closed unit ball of $H^\infty$, which is not a finite Blaschke product. In this case, we have a concrete description of the $\HH(b)$ space which makes the situation more tractable. In particular, the key point in our method is a link we establish between the properties of a composition operator $C_\varphi:\HH(b)\longrightarrow \HH(b)$ and the properties of some related weighted composition operator $W_{u,\varphi}(f)=u (f\circ \varphi)$, acting on $H^2$. Here $u$ will be some appropriate function depending on the values of $\varphi$ at the zeros of the pythagorean mate $a$ of $b$. Then, using known results on $W_{u,\varphi}$, we characterize the boundedness, the compactness and the Hilbert--Schmidt property of $C_\varphi$ on $\HH(b)$.

 Let us mention that all the results we obtain in this paper can be translated in the context of $\mathcal D(\mu)$ spaces where $\mu$ is a finite sum of Dirac measures, and we then extend or recover many results obtained by D. Sarason and J.N. Silva in the context of local Dirichlet space $\mathcal D(\delta_1)$. However, our two approaches are different. Indeed, D. Sarason and J.N. Silva used an approach based on an appropriate counting function, whereas we use an approach based on an interesting link with some weighted composition operators. \\

In Section~\ref{sec2}, we present a quick overview of some known properties of de Branges--Rovnyak spaces, useful for our study of composition operators. We also prove a new result on multipliers which is interesting in its own right.  In Section~\ref{sec3}, we give some necessary/sufficient conditions for boundedness. In particular, we prove that the boundedness of $C_\varphi$ imposes some restrictive conditions on the values of $\varphi$ at the zeros of $a$ on $\T$. Section~\ref{sec4} contains a characterization for the boundedness, whereas, in the last section, we give a characterization for the compactness and the Hilbert--Schmidt property. As we will see, major differences between the $H^2$ case and the $\HH(b)$ case emerge, and in particular, we exhibit some examples of symbols $\varphi$ for which $C_\varphi$ is Hilbert--Schimdt on $\HH(b)$ but not compact on $H^2$.  We also discuss interesting connections with angular derivatives in the sense of Carath\'eodory. 

 \section{Preliminaries on $\HH(b)$ spaces}\label{sec2}
 \subsection{Definition of de Branges--Rovnyak spaces}
Let  
 $$\operatorname{ball}(H^{\infty}) := \Big\{b \in H^{\infty}: \|b\|_{\infty} = \sup_{z \in \D} |b(z)| \leq 1\Big\}$$
 be the closed unit ball of $H^\infty$, the space of bounded analytic functions on the open unit disk $\mathbb D=\{z\in\C:|z|<1\}$, endowed with the sup norm. For $b \in \operatorname{ball}(H^{\infty})$, the \emph{de Branges--Rovnyak space} $\HH(b)$ is the reproducing kernel Hilbert space on $\mathbb D$ associated with the positive definite kernel $k_\lambda^b$, $\lambda\in\D$, defined as
\begin{equation}\label{eq:original kernel H(b)}
k^{b}_{\lambda}(z) = \frac{1 - \overline{b(\lambda)}b(z)}{1 - \overline{\lambda} z}, \quad  z \in \D.
\end{equation}
 We refer the reader to the book \cite{Sa} by D. Sarason and to the recent monograph \cite{MR3617311,MR3497010} by the second author and J. Mashreghi for an in-depth study of de Branges--Rovnyak spaces and their connections to numerous other topics in operator theory and complex analysis.
 
It is known that $\HH(b)$ is contractively contained in the Hardy space $H^2$ of analytic functions $f$ on $ \D $ for which 
$$\|f\|_{H^2} := \Big(\sup_{0 < r < 1} \int_{\T} |f(r \xi)|^2 dm(\xi)\Big)^{\frac{1}{2}}<\infty,$$
where $m$ is the normalized Lebesgue measure on the unit circle $\T = \{\xi \in \C: |\xi| = 1\}$. For $f \in H^2$, the non-tangential limit 
$f(\xi):=\lim_{\substack{z\to\xi\\  \sphericalangle}} f(z)$ exists for $m$-almost every $\xi \in \T$ and
\[
\|f\|_{H^2}  = \Big(\int_{\T} |f(\xi)|^2 dm(\xi)\Big)^{\frac{1}{2}}.
\]
See \cite{MR0268655, garnett}. 
Though $\HH(b)$ is contractively contained in $H^2$, it is generally not closed in the $H^2$ norm. Indeed, $\HH(b)$ is closed in $H^2$ if and only if $b=I$ is an inner function, meaning that $|I(\xi)|=1$ for a.e. $\xi\in\mathbb T$. In this case, $\HH(b)=K_I=(I H^2)^\perp$ is the so-called \emph{model space} associated to $I$. In the particular case when $I=B$ is a finite Blaschke product associated to the sequence $\lambda_1,\dots,\lambda_n$ (with multiplicities $m_1,\dots,m_n$), meaning that 
\[
B(z)=\gamma \prod_{j=1}^n \left(\frac{z-\lambda_j}{1-\overline{\lambda_j}z}\right)^{m_j},\qquad z\in\D,
\]
where, for $1\leq j\leq n$, $\lambda_j\in\D$, $m_j\in\mathbb N$  and $\gamma\in\T$, then we have an explicit description of $K_B$ given by 
\begin{equation}\label{descrip-model-space-FBP}
K_B=\left\{\frac{p}{\prod_{j=1}^n (1-\overline{\lambda_j}z)^{m_j}}:p\in \P_{N - 1}\right\},
\end{equation}
where $N=\sum_{j=1}^n m_j$ and $\P_{N-1}$ denotes the set of polynomials of degree less or equal to $N-1$. See \cite[Corollary 5.18]{MR3526203}.\\

Another particular case is when $\|b\|_\infty<1$. In this case, the space $\HH(b)$ coincides with the Hardy space $H^2$ as sets (with an equivalent norm). As already mentioned in the introduction, the problem of boundedness and compactness for $C_\varphi$ on $H^2$ has been completely solved. Therefore, {\bf we will assume in the rest of the paper that $\|b\|_\infty=1$}.

\subsection{A description of $\HH(b)$ when $b$ is a rational function}
 
Although the contents of the space $\HH(b)$ may seem mysterious for a general $b \in \operatorname{ball}(H^{\infty})$, it turns out that when $b \in \operatorname{ball}(H^{\infty})$ is a rational function (and not a finite Blaschke product)  the description of $\HH(b)$ is quite explicit. Such a $b$ is a non-extreme point of $\operatorname{ball}(H^{\infty})$, which is equivalent to $\log(1-|b|)\in L^1(\mathbb T)$, and so there is a unique outer function $a$, called the \emph{pythagorean mate} for $b$, such that $a(0)>0$ and $|a|^2+|b|^2=1$ a.e. on $\mathbb T$. When $b$ is rational, $a$ is also a rational function and can be obtained from the Fej\'{e}r--Riesz theorem. See \cite{MR3503356}.

Let $ \xi_1, \dots, \xi_n $ denote the {\em distinct} roots of $ a $ on $ \T $, with corresponding  multiplicities $ m_1, \dots, m_n $, and define the polynomial $ a_1 $ by 
\begin{equation}\label{eq:definition of a}
a_1(z): =\prod_{j=1}^n (z-\xi_j)^{m_j}.
\end{equation}
Observe that since we assume that $\|b\|_\infty=1$, the function $a$ has necessarily some zeros on $\T$ (corresponding to the points where $b$ achieves its maximum on the closed unit disk). Results from  \cite{MR3110499, MR3503356} show that $\HH(b)$ has an explicit description as 
\begin{equation}\label{eq:formula for H(b)}
\HH(b)=a_1H^2 \oplus \P_{N-1},
\end{equation}
where $ N=m_1+\dots+m_n $ and $\oplus$ above denotes a topological direct sum in $\HH(b)$. Moreover, if $ f\in\HH(b) $ is decomposed with respect to \eqref{eq:formula for H(b)} as 
\begin{equation}\label{uUUiipPPS}
 f=a_1\widetilde{f}+p_f, \quad  \mbox{where $\widetilde{f}  \in H^2$ and  $p_f \in \P_{N - 1}$},
 \end{equation}
an equivalent norm on $ \HH(b) $  (to the natural one $\|\cdot\|_b$ induced by the positive definite kernel $k_{\lambda}^{b}$, $\lambda\in\D$, above)  is
\begin{equation}\label{eq:norm in h(b)}
\vvvert a_1\widetilde f+p_f\vvvert^{2}_{b}:=\|\widetilde{f}\|^2_{H^2}+\|p_f\|^2_{H^2}.
\end{equation}
Also we use the scalar product $\langle\cdot,\cdot\rangle_b$ associated to $\vvvert\cdot\vvvert_b$ defined as
\begin{equation}\label{eq:scalar-product in h(b)}
\langle f_1 f_2\rangle_b=\langle \widetilde f_1,\widetilde f_2\rangle_2+\langle p_{f_1},p_{f_2}\rangle_2,
\end{equation}
for every $f_1=a_1\widetilde f_1+p_{f_1}$, $f_2=a_1\widetilde f_2+p_{f_2}$ in $\HH(b)$. In particular, with the scalar product given by \eqref{eq:scalar-product in h(b)}, the direct sum in \eqref{eq:formula for H(b)} becomes an orthogonal sum. It is important to note that $ \vvvert \cdot\vvvert_b $ is only equivalent to the original norm $\|\cdot\|_b$ associated to the kernel in~\eqref{eq:original kernel H(b)}, and its scalar product as well as the reproducing kernels and the adjoints of operators defined on $\HH(b)$ will be different. However, the boundedness and compactness properties for the operator $C_\varphi$ on $\HH(b)$ do not depend on the equivalent norm we consider. So in the rational case, there is no problem to work  with the norm given by \eqref{eq:norm in h(b)}  and the scalar product given by \eqref{eq:scalar-product in h(b)}, which {\bf{we will do in the rest of the paper}}. 

Notice that we exclude the case when $b$ is a finite Blaschke product because, as already mentioned, this case has already been studied in \cite{MR3176147,MR3001330} for the boundedness, and since the associated $\HH(b)$ space is of finite dimension (when $b$ is a finite Blaschke product), the problem of the compactness for the operator $C_\varphi$ on $\HH(b)$ reduces to the problem of its boundedness. 
 
 
An important property of functions $f$ in $\HH(b)$, when $b\in\operatorname{ball}(H^{\infty})$ is rational (but not a finite Blaschke product), is the existence of non-tangential limits for $f$ and some of its derivatives at certain points on $\T$. More precisely, for every  $1 \leq j \leq n$, for every $0\leq k\leq m_j-1$ and every $f \in \HH(b)$, we have 
 \begin{equation}\label{q}
 f^{(k)}(\xi_j):= \lim_{\substack{z\to\xi_j\\  \sphericalangle}} f^{(k)}(z) = p_f^{(k)}(\xi_j),
 \end{equation}
 where $f = a_1 \widetilde{f} + p_f$ with $\widetilde{f} \in H^2$ and $p_f \in \P_{N - 1}$. See \cite[Corollary 27.22]{MR3617311}.  
 \subsection{The multipliers of $\HH(b)$ spaces when $b$ is rational} 
 
 A tool which will turn out to be useful when studying the boundedness and compactness of composition operators on $\HH(b)$ is the notion of multipliers. Recall that the set $\mathfrak M(\HH(b))$ of multipliers of $\HH(b)$ is defined as 
 \[
 \mathfrak M(\HH(b))=\{\varphi\in\mbox{Hol}(\D):\varphi f\in\HH(b), \forall f\in\HH(b)\}.
 \]
 Using standard arguments, we see that $\mathfrak M(\HH(b))\subset H^\infty\cap\HH(b)$. In general, this inclusion is strict. However, when $b\in\operatorname{ball}(H^{\infty})$ is rational (but not a finite Blaschke product), it is proved in \cite{MR3967886} that we have equality, meaning that 
 \begin{equation}\label{eq:multiplier}
 \mathfrak M(\HH(b))=H^\infty\cap\HH(b).
 \end{equation} 
 
The following result will be useful in our study of boundedness of composition operators and is interesting in its own right.
\begin{Lemma}\label{Lemme:multiplicateur}
 Let $b \in \operatorname{ball}(H^{\infty})$ be a rational function (but not a finite Blaschke product). Let $\varphi\in\HH(b)$ and assume that $1/\varphi\in H^\infty$. Then $1/\varphi\in \mathfrak M(\HH(b))$.
\end{Lemma}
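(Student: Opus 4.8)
The plan is to reduce the statement to showing that $1/\varphi\in\HH(b)$. Indeed, by hypothesis $1/\varphi\in H^\infty$, so once we know $1/\varphi\in\HH(b)$ the multiplier characterization \eqref{eq:multiplier} immediately gives $1/\varphi\in H^\infty\cap\HH(b)=\mathfrak M(\HH(b))$. Write $\varphi=a_1\widetilde\varphi+p_\varphi$ according to \eqref{uUUiipPPS}, with $\widetilde\varphi\in H^2$ and $p_\varphi\in\P_{N-1}$. The first key observation is that $p_\varphi$ does not vanish at any of the points $\xi_1,\dots,\xi_n$: since $1/\varphi\in H^\infty$ is bounded, say by $M$, we have $|\varphi(z)|\ge 1/M$ for all $z\in\D$, and letting $z$ tend non-tangentially to $\xi_j$ and using \eqref{q} with $k=0$ yields $|p_\varphi(\xi_j)|=|\varphi(\xi_j)|\ge 1/M>0$ for each $j$.

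Since the roots of $a_1$ are exactly $\xi_1,\dots,\xi_n$, the previous step shows that $p_\varphi$ and $a_1$ have no common root, i.e. $\gcd(p_\varphi,a_1)=1$. Consequently $p_\varphi$ is invertible in the quotient ring $\C[z]/(a_1)$, and I would take $q\in\P_{N-1}$ to be the canonical representative of its inverse (which has degree $<\deg a_1=N$), so that $q\,p_\varphi\equiv 1\pmod{a_1}$; equivalently, there is a polynomial $s$ with $1-q\,p_\varphi=a_1 s$. The point of this algebraic step is to clear the pole of $1/\varphi$ on $\T$ without ever dividing by $a_1$ directly: rather than trying to estimate $1/\varphi-q$ near the $\xi_j$, I produce a congruence that makes the division by $a_1$ exact at the polynomial level.

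It then remains to check that $(1/\varphi-q)/a_1\in H^2$, which exhibits the decomposition $1/\varphi=a_1 g+q$ with $g\in H^2$ and $q\in\P_{N-1}$ required by \eqref{eq:formula for H(b)}. Writing $1/\varphi-q=(1-q\varphi)/\varphi$, and using $q\varphi=a_1(q\widetilde\varphi)+q\,p_\varphi$ together with $1-q\,p_\varphi=a_1 s$, one finds
\[
\frac{1-q\varphi}{a_1}=s-q\widetilde\varphi\in H^2,
\]
since $s$ and $q$ are polynomials (hence in $H^\infty$) and $\widetilde\varphi\in H^2$. Multiplying by $1/\varphi\in H^\infty$ gives $(1/\varphi-q)/a_1=\tfrac1\varphi\,(s-q\widetilde\varphi)\in H^2$, as wanted. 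Thus $1/\varphi\in\HH(b)$ and the proof concludes as in the first paragraph. The only genuinely delicate point is the non-vanishing of $p_\varphi$ at the $\xi_j$; everything afterwards is purely algebraic, and in particular the temptation to analyze the boundary behaviour of $1/\varphi$ near the $\xi_j$ is entirely bypassed by the congruence $q\,p_\varphi\equiv 1\pmod{a_1}$.
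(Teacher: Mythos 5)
Your proof is correct, and it takes a genuinely different route from the paper's. The paper also reduces to showing $1/\varphi\in\HH(b)$ via \eqref{eq:multiplier} and also starts from the non-vanishing of $\varphi$ at the $\xi_j$, but from there it proceeds analytically: it first shows that each derivative $(1/\varphi)^{(k)}$ has a non-tangential limit at $\xi_j$ (by writing it as a polynomial in $\varphi,\varphi',\dots,\varphi^{(k)}$ over $\varphi^{k+1}$), then builds the candidate polynomial $p_h\in\P_{N-1}$ by Hermite interpolation of these boundary values, and finally proves that $a_1$ divides $p_\varphi p_h-1$ by computing boundary derivatives with the Leibniz rule. You bypass all of this boundary analysis: your $q$ is produced purely algebraically as the inverse of $p_\varphi$ in $\C[z]/(a_1)$ (legitimate, since $p_\varphi(\xi_j)=\varphi(\xi_j)\neq 0$ for all $j$ gives $\gcd(p_\varphi,a_1)=1$), so the divisibility $a_1\mid 1-qp_\varphi$ holds by construction rather than by verification, and the identity
\[
\frac{1-q\varphi}{a_1}=\frac{1-qp_\varphi-a_1q\widetilde\varphi}{a_1}=s-q\widetilde\varphi\in H^2
\]
together with $1/\varphi\in H^\infty$ finishes the argument. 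Note that by uniqueness of the decomposition \eqref{eq:formula for H(b)}, your $q$ necessarily coincides with the paper's interpolating polynomial $p_h$, but you never need to know that it interpolates anything; you only invoke \eqref{q} at order $k=0$, whereas the paper needs it (and the Taylor expansion \eqref{eq:taylor}) up to order $m_j-1$. What you lose is the explicit boundary-value information about $1/\varphi$, which is in the spirit of the techniques the paper reuses later (e.g.\ in Lemma~\ref{lemme 3.9} and Lemma~\ref{lemme 0.4.0.17}); what you gain is a shorter, essentially algebraic proof.
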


\begin{proof}
Let $a$ be the pythagorean mate of $b$, to which we associate the polynomial $a_1(z)=\prod_{j=1}^n (z-\xi_j)^{m_j}$ as in \eqref{eq:definition of a}. Using \eqref{eq:formula for H(b)}, we can decompose $\varphi\in\HH(b)$ as 
\begin{equation}\label{eq 1.5.5}
 \varphi = a_1 \tilde{ \varphi} + p_{ \varphi},
\end{equation}
 where $ \tilde{ \varphi} \in H^2$ and $p_{ \varphi} \in \P_{N - 1}$. We also know from \eqref{q} that for every $1\leq j\leq n$ and every $0\leq k\leq m_j-1$, the function $\varphi^{(k)}$ has a non-tangential limit at point $\xi_j$ and 
\[ 
\varphi^{(k)} (\xi_j) = p_{ \varphi}^{(k)} (\xi_j).
\]
The assumption $1/\varphi\in H^\infty$ implies the existence of a constant $\delta>0$ such that for every $z\in\D$, $|\varphi(z)|\geq\delta$. In particular, letting $z$ tend non-tangentially to $\xi_j$ gives $\varphi(\xi_j)\neq 0$ for every $1\leq j\leq n$.  

Now set  $h=1/\varphi$. According to \eqref{eq:multiplier} and our assumption, we need to prove that $h\in \HH(b)$. Notice that for every $0 \leq k \leq m_j -1$, we have  
\[
h^{(k)}  = \frac{\psi_k ( \varphi , \varphi^{ \prime}, \ldots, \varphi^{(k)})}{\varphi^{k +1}},
\]
where $ \psi_k$ is a polynomial of $ k+1$ variables. In particular, we deduce that for every $1 \leq j \leq n$ and every $0 \leq k \leq m_j - 1$, the function $h^{(k)}$ has a non-tangential limit at $\xi_j$. Consider now the (unique) polynomial $p_h \in \P_{N-1}$ such that 
\[
p_h^{(k)} (\xi_j) = h^{(k)} (\xi_j),\qquad 1\leq j\leq n,\,0\leq k\leq m_j-1.
\]
The polynomial $p_h$ can be constructed using Hermite interpolating polynomials. See for instance \cite[Chap. 1, E.7]{MR1367960}. According to \eqref{eq:formula for H(b)}, we need to check that the function $ \psi := \dfrac{h - p_h}{a_1}$ is in $H^2$ in order to conclude that $h\in\HH(b)$. To this purpose, write 
\begin{align*}
\psi &= \dfrac{\frac{1}{ \varphi} - p_h}{a_1} = \dfrac{1}{ \varphi} . \dfrac{1 - \varphi p_h}{a_1}\\
&= \dfrac{1 }{ \varphi} . \dfrac{1 - p_\varphi p_h}{a_1} + \dfrac{1}{ \varphi} . \dfrac{p_h (p_\varphi - \varphi)}{a_1 }.
\end{align*}
It follows from \eqref{eq 1.5.5} that $(p_{ \varphi} - \varphi)/a_1= - \tilde{\varphi} \in H^2$ and since $1/\varphi$ and $p_h $ are in $H^{ \infty}$, the second term $p_h(p_\varphi-\varphi)/(\varphi a_1)$ is in $H^2$. For the first term, using one more time that  $1/\varphi \in H^{ \infty}$, it is sufficient to prove that $ \dfrac{  p_{ \varphi} p_{h} - 1}{a_1} \in H^2$. To this purpose, observe that for $1\leq j\leq n$, we have
\[
(p_{ \varphi} p_h - 1)(\xi_j) = \varphi(\xi_j) h(\xi_j) - 1 = 0.
\]
Thus every point $\xi_j$, $1\leq j\leq n$, is a zero of the poynomial $p_{ \varphi} p_h - 1$. Moreover, for $1\leq j\leq n$ and  $1 \leq k \leq m_j - 1$,  we have
\begin{align*}
( p_{ \varphi} p_h  - 1)^{(k)} (\xi_j) &= (p_{ \varphi} p_h)^{(k)} (\xi_j)\\
&= \sum_{\ell = 0}^k  \binom{k}{\ell} p_{ \varphi}^{ ( \ell)} (\xi_j) p_h^{(k - \ell)} (\xi_j) \\
&= \sum_{ \ell = 0}^k  \binom{k}{\ell} \varphi^{( \ell)}(\xi_j) h^{ (k - \ell)} (\xi_j) \\
&= ( \varphi h )^{(k)} (\xi_j)\\
&= ( \varphi h - 1  )^{(k)} (\xi_j) = 0,
\end{align*}
because $ \varphi h - 1\equiv 0 $. Hence, for every $1 \leq j\leq n$,  $\xi_j$ is a zero of the polyomial $p_{ \varphi} p_h - 1$ with a multiplicity at least $m_j - 1$. In particular, the polynomial $a_1(z) = \prod_{i=1}^n (z - \zeta_i)^{m_i}$ divides the polynomial $p_{ \varphi} p_h - 1$, meaning that the function $(p_\varphi p_h-1)/a_1$ is also a polynomial and thus belongs to $H^2$. Finally $ \psi \in H^2$ and then $1/\varphi \in \HH(b)$. 
\end{proof}

\begin{Corollary}\label{cor 0.4.0.19}
 Let $b \in \operatorname{ball}(H^{\infty})$ be a rational function (but not a finite Blaschke product). Let $\varphi\in\HH(b)\cap H^{\infty}$. Then, for every  $\lambda \in \mathbb{C}$, $|\lambda|<\|\varphi\|_\infty^{-1}$, the function  $1/(1-\overline{\lambda}\varphi)\in \mathfrak M(\HH(b))$. 
\end{Corollary}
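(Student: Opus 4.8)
The plan is to deduce this directly from Lemma~\ref{Lemme:multiplicateur} applied to the function $\psi := 1 - \overline{\lambda}\varphi$. Since that lemma guarantees $1/\psi \in \mathfrak M(\HH(b))$ as soon as $\psi \in \HH(b)$ and $1/\psi \in H^\infty$, it suffices to verify these two hypotheses for the particular choice $\psi = 1 - \overline{\lambda}\varphi$, and then the conclusion $1/(1-\overline{\lambda}\varphi) \in \mathfrak M(\HH(b))$ is immediate.

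First I would check that $\psi \in \HH(b)$. Because we are working under the standing assumption $\|b\|_\infty = 1$, the pythagorean mate $a$ has at least one zero on $\T$, so that $N = m_1 + \dots + m_n \geq 1$; hence the constant function $1$ lies in $\P_{N-1}$, and therefore in $\HH(b)$ by the decomposition \eqref{eq:formula for H(b)}. Since $\HH(b)$ is a vector space and $\varphi \in \HH(b)$ by hypothesis, it follows that $\psi = 1 - \overline{\lambda}\varphi \in \HH(b)$.

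Next I would show $1/\psi \in H^\infty$. The assumption $|\lambda| < \|\varphi\|_\infty^{-1}$ yields $|\overline{\lambda}\varphi(z)| \leq |\lambda|\,\|\varphi\|_\infty < 1$ for every $z \in \D$, and hence the \emph{uniform} lower bound $|\psi(z)| \geq 1 - |\lambda|\,\|\varphi\|_\infty =: \delta > 0$. In particular $\psi$ does not vanish on $\D$, so $1/\psi$ is holomorphic and bounded by $1/\delta$, that is $1/\psi \in H^\infty$. Combining the two points, Lemma~\ref{Lemme:multiplicateur} applies to $\psi$ and gives $1/\psi = 1/(1-\overline{\lambda}\varphi) \in \mathfrak M(\HH(b))$.

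There is no genuine obstacle here, as the statement is a direct corollary of the preceding lemma; the only points deserving a little care are confirming that constants lie in $\HH(b)$ (which is exactly where the standing hypothesis $\|b\|_\infty = 1$, forcing $N \geq 1$, enters) and establishing the \emph{uniform} positivity of $|\psi|$ on $\D$, since this is what places $1/\psi$ in $H^\infty$ rather than merely making it a holomorphic nonvanishing function.
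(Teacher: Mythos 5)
Your proof is correct and follows exactly the paper's own route: apply Lemma~\ref{Lemme:multiplicateur} to $1-\overline{\lambda}\varphi$ after checking that this function lies in $\HH(b)$ and that the uniform lower bound $|1-\overline{\lambda}\varphi(z)|\geq 1-|\lambda|\,\|\varphi\|_\infty>0$ puts its reciprocal in $H^\infty$. The only difference is that you spell out why the constant $1$ belongs to $\HH(b)$ (via $N\geq 1$ and the decomposition \eqref{eq:formula for H(b)}), a detail the paper leaves implicit.
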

\begin{proof}
This result follows immediately from Lemma~\ref{Lemme:multiplicateur}, because first $1 - \bar{ \lambda} \varphi \in \HH(b)$ and second 
\[
 | 1 - \bar{ \lambda} \varphi (z) | \geq 1 - | \lambda | | \varphi (z) | \geq 1 - | \lambda | \|\varphi\|_\infty> 0,
 \]
which implies that $1/(1-\overline{\lambda}\varphi)\in H^{ \infty}$. 
\end{proof}

\subsection{Two technical results in $\HH(b)$ when $b$ is a rational function}
The next result introduces a function $\psi$, related to the symbol $\varphi$, which will be useful in our characterization for the boundedness of the operator $C_\varphi$ on $\HH(b)$. In order to prove the result, we need to recall a formula on boundary Taylor expansion. Let $h$ be an analytic function on the open unit disk $\D$. Assume that $h,h',\dots h^{(\ell)}$ have non-tangential limits at a point $\xi\in\T$. Then we can write
\begin{equation}\label{eq:taylor}
h(z)=\sum_{k=0}^\ell \frac{h^{(k)}(\xi)}{k!}(z-\xi)^k +(z-\xi)^\ell \varepsilon(z),\qquad z\in\D,
\end{equation}
where $\varepsilon$ is an analytic function on $\D$ with a zero non-tangential limit at $\xi$. A version of this formula appears in \cite[Lemma 22.5]{MR3617311} in the context of the upper half-plane but the proof can be easily adapted to the context of the open unit disk. 

\begin{Lemma}\label{lemme 3.9} 
 Let $b \in \operatorname{ball}(H^{\infty})$ be a rational function (but not a finite Blaschke product), let $a$ be its pythagorean mate to which we associate the polynomial $a_1(z)=\prod_{j=1}^n (z-\xi_j)^{m_j}$ as in \eqref{eq:definition of a}. To each $\varphi\in\HH(b)\cap H^{\infty}$, we associate the function $\psi$ defined as
 \[
 \psi(z)=\prod_{j=1}^n \left(\frac{\varphi(z)-\varphi(\xi_j)}{z-\xi_j}\right)^{m_j},\qquad z\in\D.
 \]
 Then $\psi$ belongs to $H^2$. 
\end{Lemma}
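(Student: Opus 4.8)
The plan is to first rewrite $\psi$ in a closed form that isolates the real difficulty. Putting all the factors over a common denominator and setting $c_j:=\varphi(\xi_j)$, one has
\[
\psi(z)=\frac{\prod_{j=1}^n\bigl(\varphi(z)-\varphi(\xi_j)\bigr)^{m_j}}{\prod_{j=1}^n(z-\xi_j)^{m_j}}=\frac{P(\varphi(z))}{a_1(z)},
\qquad P(w):=\prod_{j=1}^n(w-c_j)^{m_j},
\]
so $\psi=P(\varphi)/a_1$ with $P$ a polynomial of degree $N$. Since $a_1$ vanishes only on $\T$, this quotient is holomorphic on $\D$, and the whole issue is to control its boundary behaviour near the points $\xi_j$, where $a_1$ has its zeros. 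This division by $a_1$ is exactly where the hard part lies.

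To handle it, I would bring in the decomposition $\varphi=a_1\widetilde{\varphi}+p_\varphi$ from \eqref{eq:formula for H(b)}--\eqref{uUUiipPPS}, together with the boundary identity $\varphi(\xi_j)=p_\varphi(\xi_j)$ furnished by \eqref{q} (only the case $k=0$ is needed here). The key trick is to subtract off $P(p_\varphi)$ and split
\[
\psi=\frac{P(\varphi)-P(p_\varphi)}{a_1}+\frac{P(p_\varphi)}{a_1},
\]
treating the two terms separately. The point of this splitting is that the subtraction simultaneously exposes the factor $\varphi-p_\varphi=a_1\widetilde\varphi$ in the first numerator and leaves a remainder $P(p_\varphi)$ that turns out to be divisible by $a_1$.

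For the first term, I would use the elementary factorisation $P(x)-P(y)=(x-y)\,Q(x,y)$ with $Q$ a polynomial in two variables. This gives $P(\varphi)-P(p_\varphi)=(\varphi-p_\varphi)\,Q(\varphi,p_\varphi)=a_1\widetilde{\varphi}\,Q(\varphi,p_\varphi)$, so that $\bigl(P(\varphi)-P(p_\varphi)\bigr)/a_1=\widetilde{\varphi}\,Q(\varphi,p_\varphi)$. Since $\widetilde{\varphi}\in H^2$ and both $\varphi$ and the polynomial $p_\varphi$ lie in $H^\infty$ (hence $Q(\varphi,p_\varphi)\in H^\infty$), this product belongs to $H^2$. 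For the second term, I would observe that $p_\varphi(\xi_j)=c_j$ forces the factor $(p_\varphi-c_j)^{m_j}$ of $P(p_\varphi)$ to vanish to order at least $m_j$ at $\xi_j$; as the $\xi_j$ are distinct, the polynomial $a_1=\prod_j(z-\xi_j)^{m_j}$ divides $P(p_\varphi)$, so $P(p_\varphi)/a_1$ is itself a polynomial and in particular lies in $H^2$. Adding the two pieces yields $\psi\in H^2$. The only genuinely delicate point is the boundary division by $a_1$, and it is entirely neutralised by the $P(\varphi)-P(p_\varphi)$ splitting above; everything else is a routine use of the fact that $H^\infty\cdot H^2\subseteq H^2$.
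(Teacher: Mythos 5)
Your proof is correct, and it takes a genuinely different and more elementary route than the paper's. The paper first places $h:=\prod_{j}(\varphi-\varphi(\xi_j))^{m_j}$ in $\HH(b)$ by invoking the multiplier identity \eqref{eq:multiplier} (so that $\varphi\in\mathfrak M(\HH(b))$), writes $h=a_1\tilde h+p_h$, identifies $p_h$ as the Hermite interpolation polynomial of the boundary data $h^{(k)}(\xi_i)$, and then kills $p_h$ by showing $h^{(k)}(\xi_i)=0$ for all $0\le k\le m_i-1$ via the boundary Taylor expansion \eqref{eq:taylor}. You never need $P(\varphi)\in\HH(b)$ at all: you decompose only $\varphi$ itself as $a_1\widetilde\varphi+p_\varphi$, use just the $k=0$ case of \eqref{q} to get $p_\varphi(\xi_j)=\varphi(\xi_j)$, and finish with two pieces of polynomial algebra --- the factorization $P(x)-P(y)=(x-y)Q(x,y)$, which turns $(P(\varphi)-P(p_\varphi))/a_1$ into $\widetilde\varphi\,Q(\varphi,p_\varphi)\in H^2$ since $Q(\varphi,p_\varphi)\in H^\infty$, and the divisibility $a_1\mid P(p_\varphi)$ in $\C[z]$, which follows from the distinctness of the $\xi_j$ and makes the second term a polynomial. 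This buys you a shorter, essentially self-contained argument that bypasses the multiplier theorem \eqref{eq:multiplier} (a nontrivial cited result), the Hermite-interpolation description of $p_h$, and all boundary derivatives of order $\ge 1$. What the paper's heavier machinery buys, in turn, is reusability: the same scheme (boundary Taylor expansion plus vanishing of boundary derivatives, supplemented by an interpolating-sequence argument) is exactly what gets adapted in Lemma~\ref{lemme 0.4.0.17}, where the function $u$ involves $a_1\circ\varphi$ and the hypothesis $\varphi\in\HH(b)\cap H^\infty$ alone no longer suffices (Example~\ref{example-bounded-or-not-8}\,(c)). Your divisibility trick breaks down precisely there: for an index $i$ with $\varphi(\xi_i)=\xi_{\ell_i}\in\T$, the relevant polynomial built from $p_\varphi$ is only guaranteed to vanish at $\xi_i$ to order $m_{\ell_i}$, while order $m_i$ is needed, and the inequality $m_i\le m_{\ell_i}$ is itself a consequence of the boundedness of $C_\varphi$ (Corollary~\ref{cor 0.4.0.21}). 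So your route is the cleaner one for this particular lemma, while the paper's route sets up the technique needed for the harder statement that follows.
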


\begin{proof}
Let $h=\prod_{j=1}^n (\varphi-\varphi(\xi_j))^{m_j}$, so that $\psi=h/a_1$. By our assumption and \eqref{eq:multiplier}, the function $h$ belongs to $\HH(b)$. In particular, there exists $\tilde h \in H^2$ and $p_h \in \P_{N-1}$ such that $h = a_1 \tilde h + p_h$. It follows from \cite[Corollary 27.22]{MR3617311} that
\begin{equation}\label{eq:03238-lemme 3.9}
p_h = \sum_{i=1}^n \sum_{k=0}^{m_i - 1} h^{(k)} ( \xi_i) r_{i, k} ,
\end{equation}
where $(r_{i,k})_{1 \leq i \leq n,\,0 \leq k \leq m_i - 1}$ are the Hermite polynomials of degree less or equal to $N-1$ such that
$$r_{i,k}^{(\ell)} ( \xi_j)  = \left\{
    \begin{array}{ll}
        1 & \mbox{if $i=j$ and $k= \ell$} \\
        0 & \mbox{otherwise } 
    \end{array}.
\right.$$
Observe that it is sufficient to prove that
\begin{equation}\label{eq 3.10} 
h^{(k)} ( \xi_i) = 0, \qquad \mbox{for every }1 \leq i \leq n \hspace{0.1 cm}\mbox{and } 0 \leq k \leq m_i - 1.
\end{equation}
Indeed, it follows from \eqref{eq:03238-lemme 3.9} and \eqref{eq 3.10} that $p_h= 0$, which implies that 
$$ \psi=\frac{h}{a_1}=\tilde h,$$
whence $\psi \in H^2$.\

In order to prove \eqref{eq 3.10}, fix $1\leq i\leq n$. On one hand, observe that 
\begin{equation}\label{eq:forme-h-lemme3.10}
h=(\varphi-\varphi(\xi_i))^{m_i}\psi_i,
\end{equation} 
where $\psi_i$ is an analytic function which has a non-tangential limit at $\xi_i$. Moreover, according to \eqref{eq:taylor}, we can write 
\[
\varphi(z)=\varphi(\xi_i)+(z-\xi_i)\varphi'(\xi_i)+(z-\xi_i)\varepsilon(z),
\]
where $\varepsilon$ is an analytic function on $\D$ which has a zero non-tangential limit at point $\xi_i$. Thus, we get
\[
h(z)=\left((z-\xi_i)\varphi'(\xi_i)+(z-\xi_i)\varepsilon(z)\right)^{m_i} \psi_i(z)=(z-\xi_i)^{m_i}\left(\varphi'(\xi_i)+\varepsilon(z)\right)^{m_i}\psi_i(z).
\]
On the other hand, since $h\in\HH(b)$, using \eqref{q} and one more time \eqref{eq:taylor}, we also have
\[
h(z)=\sum_{k=0}^{m_i-1}\frac{h^{(k)}(\xi_i)}{k!}(z-\xi_i)^k+(z-\xi_i)^{m_i-1}\varepsilon_1(z),
\]
where $\varepsilon_1$ is an analytic function on $\D$ which has a zero non-tangential limit at point $\xi_i$. Therefore, we deduce that 
\[
(z-\xi_i)^{m_i}\left(\varphi'(\xi_i)+\varepsilon(z)\right)^{m_i}\psi_i(z)=\sum_{k=0}^{m_i-1}\frac{h^{(k)}(\xi_i)}{k!}(z-\xi_i)^k+(z-\xi_i)^{m_i-1}\varepsilon_1(z).
\]
It is now easy to see that this identity implies that $h^{(k)}(\xi_i)=0$ for $0\leq k\leq m_i-1$, which concludes the proof of \eqref{eq 3.10}. 
\end{proof} 
 
 We end this section with a result on some particular subspaces of $\HH(b)$ which will be of use to us in our study of compactness. It will enable us to restrict our composition operators on some subspaces of finite codimension.  
 
\begin{Lemma}\label{lemme 0.4.0.24}
 Let $b \in \operatorname{ball}(H^{\infty})$ be a rational function (but not a finite Blaschke product), let $a$ be its pythagorean mate to which we associate the polynomial $a_1$ as in \eqref{eq:definition of a}. For every inner function $I$, the subspaces $a_1 I H^2$ and $a_1 K_I$ are closed in $\HH(b)$. Moreover, we have
 \begin{equation}\label{eq 3.18} 
 \HH(b) \ominus_b a_1 I H^2 = a_1 K_{I} \oplus^\perp_b\P_{N-1}.
\end{equation} 
In particular, when $I$ is a finite Blaschke product, the subspace $a_1 I H^2$ has a finite codimension in $\HH(b)$.
\end{Lemma}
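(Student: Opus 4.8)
The plan is to reduce everything to the Hardy space by means of the natural isometry induced by multiplication by $a_1$. First I would observe that, by the very definitions \eqref{eq:norm in h(b)} and \eqref{eq:scalar-product in h(b)}, the map $U\colon H^2\to\HH(b)$, $Uf=a_1f$, sends $f$ to the element of $\HH(b)$ whose decomposition \eqref{uUUiipPPS} has $H^2$-component equal to $f$ and polynomial component equal to $0$. Hence $\vvvert Uf\vvvert_b=\|f\|_{H^2}$ and, more generally, $\langle Uf_1,Uf_2\rangle_b=\langle f_1,f_2\rangle_2$, so that $U$ is a unitary from $H^2$ onto the subspace $a_1H^2$ equipped with the inner product inherited from $\HH(b)$. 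Since \eqref{eq:formula for H(b)} is an orthogonal sum for $\langle\cdot,\cdot\rangle_b$, the subspace $a_1H^2=\HH(b)\ominus_b\P_{N-1}$ is an orthogonal complement and is therefore closed in $\HH(b)$.

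Next I would transport the standard orthogonal decomposition of $H^2$ through $U$. Recall that $H^2=IH^2\oplus K_I$ is an orthogonal (in $H^2$) decomposition into closed subspaces. Applying the unitary $U$, the images $U(IH^2)=a_1IH^2$ and $U(K_I)=a_1K_I$ are closed subspaces of $\bigl(a_1H^2,\langle\cdot,\cdot\rangle_b\bigr)$, they are orthogonal for $\langle\cdot,\cdot\rangle_b$, and
\[
a_1H^2=a_1IH^2\oplus^\perp_b a_1K_I .
\]
Because $a_1H^2$ is itself closed in $\HH(b)$, both $a_1IH^2$ and $a_1K_I$ are closed in $\HH(b)$, which settles the first assertion of the lemma.

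To obtain \eqref{eq 3.18}, I would simply combine this with the orthogonal sum $\HH(b)=a_1H^2\oplus^\perp_b\P_{N-1}$, which yields
\[
\HH(b)=a_1IH^2\oplus^\perp_b\bigl(a_1K_I\oplus^\perp_b\P_{N-1}\bigr),
\]
so that $\HH(b)\ominus_b a_1IH^2=a_1K_I\oplus^\perp_b\P_{N-1}$, exactly as in \eqref{eq 3.18}. Finally, when $I=B$ is a finite Blaschke product of degree $d$, the model space $K_B$ satisfies $\dim K_B=d$, and since $U$ is injective we also have $\dim(a_1K_B)=d$; together with $\dim\P_{N-1}=N$ this shows that $a_1BH^2$ has codimension $d+N<\infty$ in $\HH(b)$.

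As for difficulties, there is essentially no analytic obstacle: the one point that deserves a line of verification is that orthogonality in $H^2$ really does pass to the $\HH(b)$ inner product. This is immediate from \eqref{eq:scalar-product in h(b)}, which shows that on the $a_1H^2$ component the scalar product $\langle\cdot,\cdot\rangle_b$ coincides with the $H^2$ scalar product of the $\widetilde{\,\cdot\,}$-parts. In the end the whole lemma amounts to the single observation that multiplication by $a_1$ is a unitary from $H^2$ onto $\bigl(a_1H^2,\langle\cdot,\cdot\rangle_b\bigr)$, together with the fact that unitaries preserve orthogonal decompositions, closedness and dimensions.
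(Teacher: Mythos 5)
Your proof is correct and takes essentially the same route as the paper: both hinge on the observation that multiplication by $a_1$ is an isometry from $H^2$ onto $a_1H^2\subset\HH(b)$ with respect to $\vvvert\cdot\vvvert_b$, which gives closedness, and both obtain \eqref{eq 3.18} from the orthogonality built into \eqref{eq:scalar-product in h(b)}. The only difference is organizational: the paper computes $\HH(b)\ominus_b a_1IH^2$ directly by testing $f=a_1\widetilde f+p_f$ against elements $a_1Ih$, whereas you transport the decomposition $H^2=IH^2\oplus K_I$ through the unitary and then read off the complement from the resulting three-way orthogonal sum.
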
 

Here the notation $\ominus_b$ denotes the orthogonal complement in $\HH(b)$ with respect to the scalar product defined in \eqref{eq:scalar-product in h(b)}. 
 \begin{proof}
Let us introduce the operator $V : H^2  \rightarrow \HH(b)$ defined by $V(f) = a_1 f$, $f \in H^2$. According to \eqref{eq:norm in h(b)}, the operator $V$ is an isometry from $H^2$ into $\HH(b)$. Since the subspaces $K_I$ and $I H^2$ are closed subspaces of $H^2$, their ranges under $V$, respectively $a_1 K_I$ and $a_1 I H^2$, are closed in $\HH(b)$. 

Let us now check \eqref{eq 3.18}. To this purpose, let $f \in \HH(b)$. According to \eqref{uUUiipPPS}, write $f$ as  $f = a_1 \tilde{f} + p_f$ where $\tilde{f} \in H^2$ and $p_f \in \P_{N-1}$. Using \eqref{eq:scalar-product in h(b)}, the function $f\in \HH(b) \ominus_b a_1 I H^2$, if and only if for every $h\in H^2$, we have 
\[
0 = \langle f , a_1 I h\rangle_b = \langle a_1 \tilde{f} + p_f , a_1 I h\rangle_b=\langle \tilde{f}, I h\rangle_2,
\]
which is equivalent to $\tilde{f}\in H^2\ominus IH^2=K_I$. In other words, $f \in a_1 K_{I} \oplus \P_{N-1}$, which proves \eqref{eq 3.18}.

Now if $I$ is a finite  Blaschke product, then $\dim ( K_{I}) < \infty $ (see \cite[Section 14.2]{MR3497010}). Hence $ \dim (a_1 K_{I} \oplus_b^\perp \P_{N-1}) < \infty$, and the conclusion now follows from \eqref{eq 3.18}.
 \end{proof}

\section{Some basic necessary/sufficient conditions for boundedness}\label{sec3}
The aim of this section and the following is to study the boundedness of composition operators $C_\varphi:\HH(b)\longrightarrow\HH(b)$, where $C_\varphi(f)=f\circ \varphi$, $f\in\HH(b)$. Since $\HH(b)$ is a space of analytic functions on $\D$, it is necessary to require that $\varphi:\D\longrightarrow \D$ is an analytic self-map on $\D$, which is equivalent to require that $\varphi \in\operatorname{ball}(H^{\infty})$. 

\subsection{The general case}
We start with a very standard result in the theory of composition operators on reproducing kernel Hilbert spaces but for completeness, we give a proof in our context. 
\begin{Lemma}\label{lemme 4.2.1.1}
Let $b$ and $\varphi$ belong to $\operatorname{ball}(H^{\infty})$. Assume that $C_{ \varphi} ( \HH(b)) \subset \HH(b)$. Then the followings hold:
\begin{enumerate}
\item $C_{ \varphi}$ is a bounded operator on $ \HH(b)$. 
\item If furthermore $b$ is a non-extreme point of $\operatorname{ball}(H^{\infty})$, then $\varphi$ belongs to $\HH(b)$. 
\end{enumerate}
\end{Lemma}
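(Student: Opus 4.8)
The plan is to prove the two assertions in turn, relying on the standard reproducing kernel Hilbert space machinery together with the closed graph theorem for part (a) and the structural description of $\HH(b)$ for part (b).

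\medskip

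\textbf{Part (a): boundedness.} First I would show that $C_\varphi$ is a closed operator on $\HH(b)$ and then invoke the closed graph theorem. Concretely, suppose $f_k\to f$ in $\HH(b)$ and $C_\varphi f_k = f_k\circ\varphi \to g$ in $\HH(b)$. Since $\HH(b)$ is contractively contained in $H^2$ and point evaluations are bounded on $H^2$ (and hence on $\HH(b)$), convergence in $\HH(b)$ forces pointwise convergence on $\D$. Thus for every fixed $z\in\D$ we have $f_k(z)\to f(z)$ and, because $\varphi(z)\in\D$, also $f_k(\varphi(z))\to f(\varphi(z))$; on the other hand $f_k(\varphi(z))\to g(z)$. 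Comparing the two limits gives $g(z)=f(\varphi(z))=(C_\varphi f)(z)$ for all $z$, so $g=C_\varphi f$ and the graph is closed. Since by hypothesis $C_\varphi$ is everywhere defined on the Hilbert space $\HH(b)$ with values in $\HH(b)$, the closed graph theorem yields boundedness. The only point to state carefully is the boundedness of point evaluations on $\HH(b)$, which is immediate from the contractive inclusion $\HH(b)\hookrightarrow H^2$ (or directly from the reproducing kernel $k_\lambda^b$).

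\medskip

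\textbf{Part (b): $\varphi\in\HH(b)$.} Here I would exploit that $b$ is non-extreme, so the rational-type description is available and, more to the point, the identity function $\mathrm{id}(z)=z$ belongs to $\HH(b)$. The cleanest route: when $b$ is non-extreme, the constant function $1$ and the coordinate function $z$ both lie in $\HH(b)$ (this is a known feature of the non-extreme case, reflected in the excerpt's decomposition $\HH(b)=a_1H^2\oplus\P_{N-1}$, where $\P_{N-1}$ contains the relevant low-degree polynomials when $N\ge 2$; more robustly, non-extremity guarantees $S^* b/ \ldots$ type arguments placing $z\in\HH(b)$). Taking $f(z)=z$, we then have $C_\varphi f = f\circ\varphi = \varphi$, and since $C_\varphi$ maps $\HH(b)$ into $\HH(b)$ by hypothesis, we conclude $\varphi\in\HH(b)$ directly.

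\medskip

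\textbf{Anticipated main obstacle.} The substantive point in part (b) is justifying that the coordinate function $z$ (equivalently, that polynomials of degree one) belongs to $\HH(b)$ precisely when $b$ is non-extreme. In the rational setting of this paper one could read this off the decomposition \eqref{eq:formula for H(b)}, but the lemma is stated for a \emph{general} non-extreme $b$, so I would instead appeal to the characterization that for non-extreme $b$ one has $1\in\HH(b)$ and the shift/backward-shift structure places $z$ in $\HH(b)$ as well (a standard fact recorded in the de Branges--Rovnyak literature, e.g.\ in \cite{Sa} or \cite{MR3617311}). If one wished to avoid even that, an alternative is to test $C_\varphi$ against the reproducing kernels: for non-extreme $b$ the function $z\mapsto z$ can be produced as a suitable combination/limit of kernel functions, but the direct statement $z\in\HH(b)$ is cleaner and is the step I would highlight as requiring an explicit citation. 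Once $z\in\HH(b)$ is in hand, the conclusion $\varphi=C_\varphi(\mathrm{id})\in\HH(b)$ is immediate and needs no further computation.
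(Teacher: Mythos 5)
Your proposal is correct and follows essentially the same route as the paper: part (a) is the identical closed graph argument via pointwise convergence in the reproducing kernel Hilbert space, and part (b) is the identical observation that $\varphi=C_\varphi(e_1)$ once one knows the coordinate function lies in $\HH(b)$ for non-extreme $b$. The key citation you flag as needed is exactly the one the paper supplies (polynomials belong to $\HH(b)$ when $b$ is non-extreme, \cite[Theorem 23.13]{MR3617311}), so nothing is missing.
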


\begin{proof}
(a) We apply the closed graph theorem. Let $(f_k)_k$ be a sequence in $\HH(b)$, and assume that $f_k \longrightarrow f$ as $k \rightarrow \infty$, in $ \HH(b)$ and $f_k \circ \varphi \longrightarrow g$ as $k \rightarrow \infty$, in $ \HH(b)$. We need to show that $f\circ\varphi=g$. Since the convergence in $\HH(b)$ implies the pointwise convergence, for every $\lambda\in\D$, we have 
 $(f_k \circ \varphi)( \lambda) \longrightarrow g( \lambda )$, as $k \rightarrow \infty$.  But $ (f_k \circ \varphi)( \lambda)=f_k(\varphi ( \lambda))$ and $\varphi ( \lambda) \in \mathbb{D}$. Therefore, we also have $f_k ( \varphi ( \lambda )) \longrightarrow f( \varphi ( \lambda ))$ as  $k \rightarrow \infty$. By unicity of the limit, we then deduce that for every $ \lambda \in \mathbb{D}$,  $f( \varphi ( \lambda ))=g( \lambda )$. In other words, $f \circ \varphi = g$, which by the closed graph theorem implies that  $C_{ \varphi}$ is bounded from $\HH(b)$ into itself. \\
(b) Observe that when $b$ is a non-extreme point of the closed unit ball of $H^{  \infty}$, the polynomials belong to $\HH(b)$ (see \cite[Theorem 23.13]{MR3617311}). In particular, the identity function $e_1$, defined by $e_1 (z) = z$, $z\in\D$, is in $ \HH(b)$, which immediately implies that $\varphi=C_\varphi(e_1)\in\HH(b)$. 
\end{proof}

It is not surprising that the problem for the boundedness of $C_\varphi$ may occur when the symbol $\varphi$ touches the boundary. In the opposite case, if we assume furthermore that $\varphi$ extends analytically trough the closed unit disc, we prove that the operator $C_\varphi$ is always bounded on $\HH(b)$, when $b$ is a non-extreme point of $\operatorname{ball}(H^{\infty})$.

 First, let us recall that when $b$ is a non-extreme point of $\operatorname{ball}(H^{\infty})$, then $\mbox{Hol}(\overline{\D})$, the space of functions which are analytic in a neighborhood of the closed unit disk $\overline{\D}$, is contained in $\HH(b)$ (see \cite[Theorem 24.6]{MR3617311}). 

\begin{Lemma}\label{lem-fonction-analytique-closed-unit-disk}
Let $b$ be a non-extreme point of $\operatorname{ball}(H^{\infty})$ and let $\varphi\in \mbox{Hol}(\overline{\D})$ such that $\varphi(\overline{\D})\subset\D$. Then $C_\varphi$ is bounded on $\HH(b)$. 
\end{Lemma}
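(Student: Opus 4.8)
The plan is to reduce the boundedness to a containment statement and then exploit the extra regularity of $\varphi$. By part (a) of Lemma~\ref{lemme 4.2.1.1}, it suffices to prove that $C_\varphi(\HH(b))\subset\HH(b)$, that is, $f\circ\varphi\in\HH(b)$ for every $f\in\HH(b)$. The crucial resource here is the fact, recalled just above the statement, that for a non-extreme $b$ the space $\mbox{Hol}(\overline{\D})$ of functions analytic in a neighborhood of $\overline{\D}$ is contained in $\HH(b)$. So the whole argument will consist in showing that $f\circ\varphi$ actually belongs to $\mbox{Hol}(\overline{\D})$, no matter how $f\in\HH(b)$ is chosen.

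The main point is the following observation. Since $\varphi\in\mbox{Hol}(\overline{\D})$, it is analytic on some open set $V\supset\overline{\D}$ and continuous on the compact set $\overline{\D}$; together with the hypothesis $\varphi(\overline{\D})\subset\D$ this gives
\[
r:=\max_{z\in\overline{\D}}|\varphi(z)|<1 .
\]
First I would fix $\rho$ with $r<\rho<1$ and set $W:=\{z\in V:|\varphi(z)|<\rho\}$. By continuity of $\varphi$, the set $W$ is open and contains $\overline{\D}$, and by construction $\varphi(W)\subset\{|w|<\rho\}\subset\D$. Now, any $f\in\HH(b)$ is in particular in $H^2$, hence analytic on all of $\D$, and therefore analytic on the disk $\{|w|<\rho\}$ into which $\varphi$ maps $W$. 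Consequently $f\circ\varphi$ is a composition of analytic maps on $W$, so it is analytic on the open neighborhood $W$ of $\overline{\D}$; that is, $f\circ\varphi\in\mbox{Hol}(\overline{\D})$.

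Combining these two steps finishes the argument: because $b$ is non-extreme, $\mbox{Hol}(\overline{\D})\subset\HH(b)$, whence $f\circ\varphi\in\HH(b)$ for every $f\in\HH(b)$; thus $C_\varphi(\HH(b))\subset\HH(b)$, and Lemma~\ref{lemme 4.2.1.1}(a) yields the boundedness of $C_\varphi$ on $\HH(b)$. I do not expect a genuine obstacle here, since the statement sidesteps the delicate boundary behaviour that makes the general problem hard: the only slightly technical point is the passage from $\varphi(\overline{\D})\subset\D$ to the existence of an open neighborhood $W$ of $\overline{\D}$ on which $f\circ\varphi$ remains analytic, and this is settled by the compactness of $\overline{\D}$ together with the continuity of $\varphi$ on $V$.
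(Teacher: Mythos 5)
Your proof is correct and follows exactly the paper's argument: the paper likewise notes that $f\circ\varphi\in \mbox{Hol}(\overline{\D})\subset\HH(b)$ for every $f\in\HH(b)$ and then invokes Lemma~\ref{lemme 4.2.1.1}. The only difference is that you spell out, via compactness of $\overline{\D}$ and the neighborhood $W$, why $f\circ\varphi$ is analytic in a neighborhood of $\overline{\D}$ — a detail the paper treats as immediate.
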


\begin{proof}
Note that, for every $f\in\HH(b)$, we have $f\circ \varphi\in \mbox{Hol}(\overline{\D})\subset \HH(b)$. Then, according to Lemma~\ref{lemme 4.2.1.1}, the operator $C_\varphi$ is bounded from $\HH(b)$ into itself. 
\end{proof}

When $b\in\operatorname{ball}(H^{\infty})$ is rational (but not a finite Blaschke product), we will see, in Section~\ref{sec5}, that the assumptions of Lemma~\ref{lem-fonction-analytique-closed-unit-disk} not only imply that the operator $C_\varphi$ is bounded but even Hilbert-Schmidt on $\HH(b)$. We do not know if this is true for a general non-extreme point $b$ of $\operatorname{ball}(H^{\infty})$. In particular, the following question remains open.
\begin{Question}
Let $b$ be a non-extreme point of $\operatorname{ball}(H^{\infty})$ and let $\varphi\in \mbox{Hol}(\overline{\D})$ such that $\varphi(\overline{\D})\subset\D$. Does it follows that $C_\varphi$ is compact on $\HH(b)$? 
\end{Question}

\subsection{The case when $b$ is a rational function in $\operatorname{ball}(H^{\infty})$} Throughout the rest of this paper, we {\bf now assume} that $b\in\operatorname{ball}(H^{\infty})$ is a rational function (but not a finite Blaschke product) and $\|b\|_\infty=1$. Let $a$ be its pythagorean mate to which we associate the polynomial $a_1(z)=\prod_{j=1}^n (z-\xi_j)^{m_j}$ as in \eqref{eq:definition of a}. We also denote by $Z_\T(a)$ the set of the zeros of $a$ on $\T$, that is 
\[
Z_\T(a)=\{\xi_j:1\leq j\leq n\}.
\]
In order to exhibit some crucial necessary conditions on the symbol $\varphi$ for the boundedness of the composition operator $C_\varphi$ on $\HH(b)$, we need the concept of interpolating sequence for $H^\infty$, which we briefly recall now. We say that a sequence $(z_k)_k$ in $\D$ is an interpolating sequence for $H^\infty$ if, for every bounded sequence $(w_k)_k$, we can find a function $f\in H^\infty$ such that $f(z_k)=w_k$, for every $k\geq 1$. The interpolating sequences for $H^\infty$ have been characterized by L. Carleson \cite{MR117349}. We simply mention a sufficient condition: if there exists $0<q<1$ such that $(z_k)_k$ satisfies $|z_{k-1}|\leq |z_k|$ for every $k\geq 2$, and
\[
\limsup_{k\to\infty}\dfrac{1-|z_k|}{1-|z_{k-1}|}\leq q,
\]
then $(z_k)_k$ is an interpolating sequence for $H^\infty$. See \cite[page 159]{MR827223}.
The following technical and simple result will be useful for our study of composition operators on $\HH(b)$. 
 \begin{Lemma}\label{lemme 4.2.1}
 Let $\varphi:\D\to\D$ be a map which has a radial limit at $\xi\in\T$ satisfying $\varphi(\xi)\in\T$. Then there exists a sequence $(r_k)_k\subset (0,1)$ satisfying $r_k\to 1$, as $k\to \infty$, and the sequence $(\varphi(r_k\xi))_k$ is an interpolating sequence for $H^\infty$. 
 \end{Lemma}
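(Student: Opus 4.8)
The plan is to reduce the statement to the sufficient condition for interpolating sequences recalled just above the lemma: a sequence $(z_k)_k\subset\D$ with $|z_{k-1}|\leq|z_k|$ for all $k\geq 2$ and $\limsup_k (1-|z_k|)/(1-|z_{k-1}|)\leq q<1$ is interpolating for $H^\infty$. Accordingly, I would choose the radii $r_k$ so that the moduli $|\varphi(r_k\xi)|$ increase to $1$ at a fixed geometric rate. The whole proof is then an elementary continuity-and-intermediate-value-theorem construction, so there is no deep obstacle; the one point to be careful about is ruling out that the radii accumulate strictly inside $(0,1)$, which is where the hypothesis $\varphi(\D)\subset\D$ enters.

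Concretely, I would introduce the auxiliary function $g(r):=1-|\varphi(r\xi)|$ for $r\in(0,1)$. Since $\varphi$ is continuous and maps $\D$ into the \emph{open} disk, $g$ is continuous and strictly positive on $(0,1)$; and since $\varphi$ has a radial limit $\varphi(\xi)$ at $\xi$ with $|\varphi(\xi)|=1$, one has $\lim_{r\to 1^-}g(r)=0$. Fix some $q\in(0,1)$, pick any $r_1\in(0,1)$, and set $\delta_1:=g(r_1)$. Then I would build $(r_k)_k$ recursively: assuming $r_1<\dots<r_{k-1}$ have been chosen with $g(r_j)=q^{\,j-1}\delta_1$, put $\delta_k:=q^{\,k-1}\delta_1\in(0,g(r_{k-1}))$; because $g$ is continuous on $[r_{k-1},1)$ with $g(r_{k-1})>\delta_k$ and $\lim_{r\to 1^-}g(r)=0<\delta_k$, the intermediate value theorem yields some $r_k\in(r_{k-1},1)$ with $g(r_k)=\delta_k$. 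This produces an increasing sequence with $g(r_k)=q^{\,k-1}\delta_1\to 0$.

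It then remains to verify $r_k\to 1$ and the two required properties of $w_k:=\varphi(r_k\xi)$. Being increasing and bounded above by $1$, the $r_k$ converge to some $r^\star\leq 1$; were $r^\star<1$, continuity would force $g(r^\star)=\lim_k g(r_k)=0$, i.e.\ $|\varphi(r^\star\xi)|=1$, contradicting $\varphi(r^\star\xi)\in\D$. Hence $r_k\to 1$. Finally $1-|w_k|=g(r_k)=q^{\,k-1}\delta_1$ is strictly decreasing, so $|w_{k-1}|\leq|w_k|$, and $(1-|w_k|)/(1-|w_{k-1}|)=q$ for every $k\geq 2$, so the limsup condition holds with constant $q<1$. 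By the sufficient condition above, $(w_k)_k=(\varphi(r_k\xi))_k$ is an interpolating sequence for $H^\infty$, which is exactly the assertion. As noted, the only subtle step is the exclusion of an interior accumulation point for the radii, handled by the positivity of $g$ on $(0,1)$.
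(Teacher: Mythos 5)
Your construction is sound and reduces, as it should, to the sufficient condition recalled just before the lemma; for the analytic symbols to which the lemma is actually applied in the paper, it is a complete proof. There is, however, one mismatch with the statement as given: the lemma assumes only that $\varphi:\D\to\D$ is a \emph{map} having a radial limit of modulus one at $\xi$, whereas your argument uses the continuity of $g(r)=1-|\varphi(r\xi)|$ in two essential places --- the intermediate value theorem producing each $r_k$, and the exclusion of an accumulation point $r^\star<1$ (where you pass from $\lim_k g(r_k)=0$ to $g(r^\star)=0$). For a map that is merely assumed to have the stated radial limit, both steps fail, so as written your proof establishes the lemma only under an additional continuity hypothesis.

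The paper's proof avoids this entirely and is shorter: take any sequence $t_\ell\to 1^-$; by the radial limit hypothesis $|\varphi(t_\ell\xi)|\to 1$, so one can inductively extract a subsequence $r_k=t_{\ell_k}$ with
\[
1-|\varphi(r_k\xi)|\leq \tfrac12\bigl(1-|\varphi(r_{k-1}\xi)|\bigr),
\]
and this single inequality already gives both the monotonicity $|z_{k-1}|\leq |z_k|$ and $\limsup_k (1-|z_k|)/(1-|z_{k-1}|)\leq 1/2<1$, while $r_k\to 1$ holds automatically because $(r_k)_k$ is a subsequence of $(t_\ell)_\ell$. Your IVT construction buys exact geometric decay $1-|w_k|=q^{k-1}\delta_1$, but that precision is not needed, and it is precisely what forces the continuity requirement; an inequality obtained by subsequence extraction suffices and works for arbitrary maps. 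If you replace your IVT step by such an extraction (choose $r_k>\max\{r_{k-1},1-1/k\}$ with $g(r_k)\leq \tfrac12 g(r_{k-1})$, which is possible since $g(r)\to 0$ as $r\to 1^-$), your proof becomes essentially the paper's and covers the stated generality.
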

 
 \begin{proof}
 Let $(t_\ell)_\ell \subset (0,1)$, $t_\ell \longrightarrow 1$, as  $\ell \rightarrow \infty$. According to the assumption, we have $| \varphi ( t_\ell \xi)| \longrightarrow 1$ as  $\ell\rightarrow \infty$. Hence by induction, we can construct a subsequence $(t_{\ell_k})_{k \geq 1}$ such that for every $k \geq 1$, we have
$$ 1 - | \varphi (t_{\ell_k} \xi)| \leq \frac{1}{2} ( 1 - | \varphi ( t_{\ell_{k-1}}  \xi)|). $$
Define now $r_k=t_{\ell_k}$ and $z_k := \varphi (r_k \xi )$. Since
$$1 - |z_k| \leq \frac{1}{2} (1 - |z_{k-1}| ) ,$$
we deduce that
$$ \limsup_{k \rightarrow \infty} \frac{1 - |z_k|}{1 - |z_{k - 1}|} \leq \frac{1}{2} < 1, $$
whence $(z_k)_k$ is an interpolating sequence for $H^\infty$.
\end{proof}

The next result now imposes some restrictions on the symbol $\varphi$ for the boundedness of the composition operator $C_\varphi$ on $\HH(b)$. According to Lemma~\ref{lemme 4.2.1.1}, remind that, if $C_\varphi$ is bounded on $\HH(b)$, then $\varphi\in \HH(b)$, and therefore, with \eqref{q}, it follows that $\varphi$ admits a non-tangential limit $\varphi(\xi_j)$ at point $\xi_j$ for every $1\leq j\leq n$. 

\begin{Theorem}\label{lemme 4.0.4.0}
Let $ \varphi : \mathbb{D} \rightarrow \mathbb{D}$ be analytic. If $ C_{ \varphi}$ is bounded on $ \HH(b)$, then for every $j \in \{1,\ldots,n\},$ we have
$$
\varphi ( \xi_j) \in \mathbb{D} \cup Z_\T(a).
$$ 
\end{Theorem}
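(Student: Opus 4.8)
The plan is to argue by contradiction. By part (b) of Lemma~\ref{lemme 4.2.1.1}, the boundedness of $C_\varphi$ forces $\varphi\in\HH(b)$, so by \eqref{q} the non-tangential limit $\varphi(\xi_j)$ exists for each $j$ and lies in $\overline{\D}$. Hence it suffices to rule out the possibility that $\varphi(\xi_j)=\eta$ with $\eta\in\T\setminus Z_\T(a)$, i.e. with $a_1(\eta)\neq 0$. I would do this by testing $C_\varphi$ against the reproducing kernels of $\HH(b)$ for the equivalent inner product $\langle\cdot,\cdot\rangle_b$ of \eqref{eq:scalar-product in h(b)}, exploiting the adjoint identity $C_\varphi^* K_w=K_{\varphi(w)}$, which holds for any bounded composition operator on a reproducing kernel Hilbert space (indeed $\langle g,C_\varphi^*K_w\rangle_b=(C_\varphi g)(w)=g(\varphi(w))=\langle g,K_{\varphi(w)}\rangle_b$ for all $g$).

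First I would compute these kernels explicitly. Writing $s_w(z)=(1-\overline{w}z)^{-1}$ for the Szeg\H{o} kernel and $q_w(z)=\sum_{\ell=0}^{N-1}(\overline{w}z)^\ell$ for the reproducing kernel of $\P_{N-1}$ in $H^2$, and matching $\langle g,K_w\rangle_b=g(w)$ against the decomposition $g=a_1\widetilde g+p_g$ from \eqref{eq:formula for H(b)}, one finds $K_w=\overline{a_1(w)}\,a_1 s_w+q_w$. Consequently, by \eqref{eq:norm in h(b)},
\[
\vvvert K_w\vvvert_b^2=\frac{|a_1(w)|^2}{1-|w|^2}+\frac{1-|w|^{2N}}{1-|w|^2}.
\]
Now pick any sequence $r_k\nearrow 1$ and set $w_k=r_k\xi_j$ and $z_k=\varphi(w_k)$; since $\varphi$ has radial limit $\eta$ at $\xi_j$, we have $z_k\to\eta$, and in particular $|z_k|\to 1$. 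Because $a_1$ vanishes to order $m_j\geq 1$ at $\xi_j$, we have $|a_1(w_k)|\asymp(1-r_k)^{m_j}$, so $|a_1(w_k)|^2/(1-r_k^2)\asymp(1-r_k)^{2m_j-1}\to 0$ while the second term tends to $N$; hence $\vvvert K_{w_k}\vvvert_b$ stays bounded. On the other hand, $a_1(\eta)\neq 0$ gives $|a_1(z_k)|^2\to|a_1(\eta)|^2>0$, so $|a_1(z_k)|^2/(1-|z_k|^2)\to\infty$ and therefore $\vvvert K_{z_k}\vvvert_b\to\infty$.

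The contradiction is then immediate: the adjoint identity and boundedness give
\[
\vvvert K_{z_k}\vvvert_b=\vvvert C_\varphi^* K_{w_k}\vvvert_b\leq\|C_\varphi\|\,\vvvert K_{w_k}\vvvert_b,
\]
whose right-hand side is bounded, contradicting $\vvvert K_{z_k}\vvvert_b\to\infty$. This forces $\varphi(\xi_j)\in\D\cup Z_\T(a)$ for every $j$.

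The step I expect to need the most care is the explicit identification of the reproducing kernel for the non-standard equivalent inner product \eqref{eq:scalar-product in h(b)} together with the competing asymptotics at $\xi_j$ versus at $\eta$: the vanishing of $a_1$ at the points of $Z_\T(a)$ is exactly what keeps $\vvvert K_{w_k}\vvvert_b$ bounded, whereas its non-vanishing at $\eta$ is what makes $\vvvert K_{z_k}\vvvert_b$ blow up. A second, more hands-on route — probably the one suggested by the placement of Lemma~\ref{lemme 4.2.1} — is to avoid adjoints: use that lemma to extract from $(\varphi(r_k\xi_j))_k$ an interpolating sequence for $H^\infty$, build an explicit function in $\HH(b)$ that is unbounded along it, and compare the norms of $f$ and $f\circ\varphi$ directly. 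I would nevertheless pursue the kernel argument first, since it is shorter and localizes the whole phenomenon to the single factor $a_1$.
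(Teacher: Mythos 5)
Your proof is correct, and it takes a genuinely different route from the paper. The paper argues via interpolating sequences: it invokes Lemma~\ref{lemme 4.2.1} to make $(\varphi(r_k\xi_j))_k$ interpolating for $H^\infty$, builds $f\in H^\infty$ oscillating between $0$ and $1$ along that sequence, and then observes that $C_\varphi(a_1f)=(a_1\circ\varphi)(f\circ\varphi)$ must have a non-tangential limit at $\xi_j$ by \eqref{q}, which is only compatible with the oscillation of $f\circ\varphi$ if $a_1(\varphi(\xi_j))=0$. Your argument instead identifies the reproducing kernel of the equivalent inner product \eqref{eq:scalar-product in h(b)} explicitly as $K_w=\overline{a_1(w)}\,a_1s_w+q_w$ (this computation is right: the decomposition $\widetilde{K_w}=\overline{a_1(w)}s_w$, $p_{K_w}=q_w$ reproduces point evaluations, and the norm formula follows from \eqref{eq:norm in h(b)}), and then plays the standard adjoint identity $C_\varphi^*K_w=K_{\varphi(w)}$ against the competing asymptotics: $\vvvert K_{r_k\xi_j}\vvvert_b$ stays bounded precisely because $a_1$ vanishes to order $m_j\geq 1$ at $\xi_j$ (giving the factor $(1-r_k)^{2m_j-1}\to 0$), while $\vvvert K_{\varphi(r_k\xi_j)}\vvvert_b\to\infty$ precisely because $a_1(\eta)\neq 0$. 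Both proofs need the same preliminary step (boundedness forces $\varphi\in\HH(b)$, hence the limits $\varphi(\xi_j)$ exist by \eqref{q}). What your approach buys: it avoids Carleson interpolation entirely, is quantitative, and is in the same spirit as the paper's own later kernel-testing arguments (Corollary~\ref{cor 0.4.0.21} applies $W_{u,\varphi}^*k_\lambda=\overline{u(\lambda)}k_{\varphi(\lambda)}$ along the radius in exactly this way); indeed your exponent $2m_j-1$ is the germ of the multiplicity inequality $m_k\leq m_{\ell_k}$ and the ADC conclusion proved there. What the paper's approach buys: the interpolating-sequence technique is reused almost verbatim in Lemma~\ref{lemme 0.4.0.17}, where a kernel argument would not obviously suffice to kill all the derivatives $v^{(\ell)}(\xi_i)$, so it is the structurally load-bearing method for the rest of the paper.
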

\begin{proof}
The idea of the proof is similar to \cite[Lemma 4]{MR1945292} (in the context of local Dirichlet space). Observe that, for every $j \in \{ 1, \ldots , n \}$, we have $| \varphi ( \xi_j)| \leq 1$. Assume now that for some $j \in \{ 1, \ldots , n\}$, we have $|\varphi ( \xi_j)| = 1$, and let us show that  $ \varphi ( \xi_j) \in Z_\T(a)$. According to Lemma~\ref{lemme 4.2.1}, we can find a sequence $ (r_k)_k\subset (0,1)$, $r_k{\longrightarrow} 1$, as $k\to\infty$, such that if $z_k := \varphi (r_k \xi_j )$, $k \geq 1$, then $(z_k)_k$  is an interpolating sequence for $H^{ \infty}$. In particular, there is a function $f \in H^{ \infty} $ such that 
$$f(z_k)= f( \varphi (r_k \xi_j))   = \left\{
    \begin{array}{ll}
        1 & \mbox{if } k \hspace{0.1 cm}  \text{is even} \\
        0 & \mbox{if } k \hspace{0.1 cm}  \text{is odd}
    \end{array} .
\right.$$
According to \eqref{eq:formula for H(b)}, the function $a_1 f \in \HH(b)$ and thus $C_{ \varphi} (a_1 f ) = (a_1 \circ \varphi).(f \circ \varphi)$ also belongs to $ \HH(b)$. In particular, $ (a_1 \circ \varphi ). ( f\circ \varphi )$ has a non-tangential limit at $ \xi_j$. Thus $ (a_1 \circ \varphi )(r_k \xi_j)\,( f\circ \varphi ) (r_k \xi_j)$ should have a limit when $k \rightarrow \infty$. But observe that $(f \circ \varphi )( r_k \xi_j ) = f(z_k)$ has two different cluster points and 
\[
 (a_1 \circ \varphi)(r_k \xi_j) = \prod_{i=1}^n ( \varphi (r_k \xi_j) - \xi_i )^{m_i} \longrightarrow \prod_{i=1}^n ( \varphi ( \xi_j) - \xi_i)^{m_i} , \hspace{0.2 cm}\mbox{as } k \rightarrow \infty.
 \]
Thus necessarily, we should have  
$$ \prod_{i=1}^n ( \varphi ( \xi_j) - \xi_i)^{m_i} = 0.$$
Hence there exists $1 \leq i \leq n$ such that $ \varphi ( \xi_j) = \xi_i\in Z_\T(a)$.
\end{proof}

We recover the following result observed by Sarason-Silva \cite{MR1945292} in the context of $\mathcal D(\delta_1)$ space. 
\begin{Corollary}\label{cor 4.2.6}
Let $b(z)=(1+z)/2$ and $\varphi: \mathbb{D} \rightarrow \mathbb{D}$ be analytic. Assume that $C_{ \varphi}$ is bounded on $\HH(b)$. Then either $\varphi(1)\in \mathbb{D}$ or $ \varphi(1)=1$. 
\end{Corollary}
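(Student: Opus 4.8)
The plan is to reduce the statement directly to Theorem~\ref{lemme 4.0.4.0}, the only work being the identification of the pythagorean mate $a$ of $b(z)=(1+z)/2$. First I would compute the boundary modulus of $b$: writing $z=e^{i\theta}$ on $\T$, one has $|b(e^{i\theta})|^2=|1+e^{i\theta}|^2/4=\cos^2(\theta/2)$, so that $|a(e^{i\theta})|^2=1-|b(e^{i\theta})|^2=\sin^2(\theta/2)=|1-e^{i\theta}|^2/4$. Recalling that the pythagorean mate is the \emph{unique} outer function $a$ with $a(0)>0$ and $|a|^2=1-|b|^2$ a.e. on $\T$, I would then check that $a(z)=(1-z)/2$ satisfies all three requirements: its only zero lies at the point $1\in\T$ so it is outer, $a(0)=1/2>0$, and it has the correct boundary modulus just computed. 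Hence $a(z)=(1-z)/2$.

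Next I would read off the zero set of $a$ on $\T$. The function $a(z)=(1-z)/2$ has a single, simple zero on $\T$, namely at the point $1$. Thus in the notation fixed at the start of Section~\ref{sec3} we have $n=1$, $\xi_1=1$, $m_1=1$, $a_1(z)=z-1$, $N=1$, and $Z_\T(a)=\{1\}$. This places $b(z)=(1+z)/2$ squarely in the rational (not finite Blaschke) framework to which Theorem~\ref{lemme 4.0.4.0} applies, with the zero set consisting of the single point $1$.

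Finally I would invoke Theorem~\ref{lemme 4.0.4.0}. Under the hypothesis that $C_\varphi$ is bounded on $\HH(b)$, that theorem yields $\varphi(\xi_1)\in\D\cup Z_\T(a)$, that is $\varphi(1)\in\D\cup\{1\}$. In other words, either $\varphi(1)\in\D$ or $\varphi(1)=1$, which is exactly the asserted dichotomy. There is essentially no obstacle to overcome here: the only computation is the determination of the pythagorean mate, which is immediate from the Fej\'er--Riesz description recalled in Section~\ref{sec2}, and all the substantive content already resides in Theorem~\ref{lemme 4.0.4.0}. The corollary is simply its specialization to the rank-one case $b(z)=(1+z)/2$, for which $\HH(b)$ is (with equivalent norm) the local Dirichlet space $\mathcal D(\delta_1)$, thereby recovering the observation of Sarason--Silva.
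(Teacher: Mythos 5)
Your proposal is correct and matches the paper's own proof, which likewise consists of identifying the pythagorean mate $a(z)=(1-z)/2$ and invoking Theorem~\ref{lemme 4.0.4.0}; your verification that $a$ is outer, positive at the origin, and has the right boundary modulus is just a more detailed justification of the same step. No issues to report.
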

\begin{proof}
It is sufficient to note that the pythagorean mate for $b$ is given by  $a(z)=(1-z)/2$ and  apply Theorem~\ref{lemme 4.0.4.0}.
\end{proof}

In the study of boundedness of $C_{ \varphi}$ on $ \HH(b)$, according to  Lemma~\ref{lemme 4.2.1.1} and Theorem~\ref{lemme 4.0.4.0},  we may assume without loss of generality that $ \varphi \in \HH(b)\cap \operatorname{ball}(H^{\infty})$ and for every $j \in \{1, \ldots , n \} ,$ we have
$$\varphi ( \xi_j) \in \mathbb{D} \cup Z_\T(a). $$

Up to rearranging the sequence $ \xi_j$, $1\leq j \leq n$,  { \bf{we will now assume throughout this paper that}}
\begin{equation}\label{eqa}
\varphi( \xi_j) \in  Z_\T(a)\qquad \mbox{for } 1\leq j\leq p,
\end{equation}
and  
 \begin{equation}\label{eqb}
\varphi( \xi_j) \in \D\qquad  \mbox{for } p+1\leq j\leq n,
\end{equation}
where $0 \leq p \leq n$. When $p=0$, the condition \eqref{eqa} is void and it corresponds to the case when all the non-tangential limits $\varphi(\xi_j)$ belong to $\D$.  When $p=n$, the condition \eqref{eqb} is void and it corresponds to the case when all the non-tangential limits $\varphi(\xi_j)$ belong to $\T$. \\

We end this section by an important necessary condition for the boundedness of the operator $C_\varphi$ on $\HH(b)$ which will be of use to us when making a connection with the boundedness of some related weighted composition operators on $H^2$. 
\begin{Lemma}\label{lemme 0.4.0.17}
Let $ \varphi\in\HH(b)\cap \operatorname{ball}(H^{\infty})$ and assume that $\varphi$ satisfies  \eqref{eqa} and \eqref{eqb}. If $ C_{ \varphi} $ is bounded on $ \HH(b)$, then the function
\begin{equation}\label{defn-u} 
u = \frac{(a_1 \circ \varphi ) \prod_{j = p + 1}^n ( \varphi - \varphi ( \xi_j))^{m_j}}{a_1}
\end{equation}
belongs to $H^2$. 
\end{Lemma}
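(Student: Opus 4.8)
The plan is to realise $a_1u$ as the image under $C_\varphi$ of a concrete element of $\HH(b)$, and then to read off the membership $u\in H^2$ from the direct sum decomposition \eqref{eq:formula for H(b)}. Set $q(z)=\prod_{j=p+1}^n (z-\varphi(\xi_j))^{m_j}$. Since $q$ is a polynomial, $a_1q\in a_1H^2\subset\HH(b)$, and by \eqref{eqb} all the roots $\varphi(\xi_j)$, $p+1\le j\le n$, lie in $\D$. A direct computation gives $(a_1q)\circ\varphi=(a_1\circ\varphi)\prod_{j=p+1}^n(\varphi-\varphi(\xi_j))^{m_j}=a_1u$, so, $C_\varphi$ being bounded on $\HH(b)$, we get $a_1u=C_\varphi(a_1q)\in\HH(b)$. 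Decomposing it along \eqref{eq:formula for H(b)} as $a_1u=a_1h+p$ with $h\in H^2$ and $p\in\P_{N-1}$, I find $u=h+p/a_1$; as $h\in H^2$ and $p/a_1$ is a proper rational function analytic on $\D$ whose only possible singularities are the $\xi_j\in\T$, the quotient $p/a_1$ lies in $H^2$ if and only if it has no pole on $\T$, i.e. $a_1\mid p$, which by the degree constraint forces $p=0$. Hence it suffices to prove $p=0$.

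By \eqref{q} the polynomial $p$ is the Hermite interpolant determined by $p^{(k)}(\xi_j)=(a_1u)^{(k)}(\xi_j)$ for $1\le j\le n$ and $0\le k\le m_j-1$, so $p=0$ is equivalent to showing that $a_1u$ vanishes non-tangentially to order at least $m_j$ at each $\xi_j$. For the indices $p+1\le j\le n$ (where $\varphi(\xi_j)\in\D$) this is immediate: the factor $(\varphi-\varphi(\xi_j))^{m_j}$ occurs explicitly in $a_1u$, and since $\varphi-\varphi(\xi_j)$ has vanishing non-tangential limit at $\xi_j$, the boundary Taylor formula \eqref{eq:taylor}, applied exactly as in the proof of Lemma~\ref{lemme 3.9}, gives $(a_1u)^{(k)}(\xi_j)=0$ for $0\le k\le m_j-1$.

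The case $1\le j\le p$, where $\varphi(\xi_j)=\xi_{\sigma(j)}\in Z_\T(a)$ for some index $\sigma(j)$, is the crux. Here the only vanishing of $a_1u$ at $\xi_j$ comes from the single factor $(\varphi-\xi_{\sigma(j)})^{m_{\sigma(j)}}$ inside $a_1\circ\varphi$, the remaining factors tending to nonzero limits. When $m_j=1$ the required order-$1$ vanishing holds trivially since $(a_1\circ\varphi)(\xi_j)=a_1(\xi_{\sigma(j)})=0$. When $m_j\ge2$, equation \eqref{q} guarantees that $\varphi'(\xi_j)$ exists finitely, and the Julia--Carath\'eodory theorem (applicable as $|\varphi(\xi_j)|=1$) forces $\varphi'(\xi_j)\ne0$; thus $\varphi-\xi_{\sigma(j)}$ vanishes to order exactly one and $a_1u$ to order exactly $m_{\sigma(j)}$ at $\xi_j$. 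The desired vanishing to order $m_j$ therefore holds precisely when $m_{\sigma(j)}\ge m_j$, and the heart of the matter is to extract this multiplicity inequality from the boundedness of $C_\varphi$. I would argue by contradiction: assuming $m_{\sigma(j)}<m_j$, put $g_s(z)=(1-\overline{\xi_{\sigma(j)}}z)^{-s}$ with $0<s<\tfrac12$ and $f_s=a_1g_s$. Then $g_s\in H^2$, so $f_s\in a_1H^2\subset\HH(b)$, while near $\xi_j$ one has $a_1\circ\varphi\sim c_1(z-\xi_j)^{m_{\sigma(j)}}$ and $g_s\circ\varphi\sim c_2(z-\xi_j)^{-s}$ (again using $\varphi(z)-\varphi(\xi_j)\sim\varphi'(\xi_j)(z-\xi_j)$ with $\varphi'(\xi_j)\ne0$). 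Hence $C_\varphi f_s\sim c\,(z-\xi_j)^{m_{\sigma(j)}-s}$ with exponent $m_{\sigma(j)}-s<m_j-1$, so the non-tangential limit of $(C_\varphi f_s)^{(m_j-1)}$ at $\xi_j$ is infinite, contradicting \eqref{q}. Therefore $m_{\sigma(j)}\ge m_j$, the order-$m_j$ vanishing of $a_1u$ follows, and $p=0$, whence $u=h\in H^2$.

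I expect the main obstacle to be exactly this last point, namely converting boundedness into the multiplicity inequality $m_{\sigma(j)}\ge m_j$, together with making rigorous the boundary asymptotics near $\xi_j$. Passing from $\varphi(z)-\varphi(\xi_j)\sim\varphi'(\xi_j)(z-\xi_j)$ to honest control of the $(m_j-1)$-st non-tangential derivative of a composition $(a_1\circ\varphi)(g\circ\varphi)$ should be done through the refined expansion \eqref{eq:taylor} for $\varphi$ and a careful chain-rule (Fa\`a di Bruno) bookkeeping rather than the informal $\sim$ estimates above; this is the technical core of the argument.
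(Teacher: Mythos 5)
Your proof is correct, and up to the crux it has the same skeleton as the paper's: both show that $v:=a_1u$ lies in $\HH(b)$ (you write $v=C_\varphi(a_1 q)$ and use boundedness; the paper uses $\varphi\in H^\infty\cap\HH(b)=\mathfrak M(\HH(b))$), both reduce $u\in H^2$ to the vanishing $v^{(k)}(\xi_j)=0$, $0\le k\le m_j-1$, via the Hermite description of the polynomial part in \eqref{eq:formula for H(b)}, and both dispose of the indices $p+1\le j\le n$ by the argument of Lemma~\ref{lemme 3.9}. The divergence is in the hard case $1\le j\le p$, and there your route is genuinely different. The paper stays elementary: via Lemma~\ref{lemme 4.2.1} it produces an interpolating sequence $z_k=\varphi(r_k\xi_j)$ and an $f\in H^\infty$ oscillating between $0$ and $1$ along it, observes that $v\cdot(f\circ\varphi)=h\circ\varphi\in\HH(b)$ for $h=a_1\prod_{j'>p}(z-\lambda_{j'})^{m_{j'}}f\in a_1H^2$, and kills the Taylor coefficients of $v$ at $\xi_j$ inductively by playing the oscillation of $f(z_k)$ against the existence of non-tangential limits; no information on $\varphi'(\xi_j)$ is needed and no case distinction on $m_j$ arises. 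You instead invoke Julia--Carath\'eodory: \eqref{q} gives the finite non-tangential limit $\varphi'(\xi_j)$ when $m_j\ge 2$, and $|\varphi(\xi_j)|=1$ forces $\varphi'(\xi_j)\ne 0$ (the paper itself records $c=|\varphi'(\xi)|>0$ in Section~\ref{sec4}), so $v$ vanishes to exact order $m_{\sigma(j)}$; you then extract the multiplicity inequality $m_{\sigma(j)}\ge m_j$ from boundedness by testing $C_\varphi$ on $f_s=a_1\,(1-\overline{\xi_{\sigma(j)}}z)^{-s}$, $0<s<1/2$, and comparing the growth $|z-\xi_j|^{m_{\sigma(j)}-s}$ with the finite Taylor data that \eqref{q} guarantees for $C_\varphi f_s\in\HH(b)$. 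That contradiction closes exactly as you indicate (the deferred bookkeeping is the inductive comparison of coefficients in \eqref{eq:taylor}, which works since $m_{\sigma(j)}-s$ is non-integer and below $m_{\sigma(j)}\le m_j-1$), and it buys something the paper's proof of this lemma does not: it establishes directly the inequality $m_j\le m_{\sigma(j)}$, which in the paper appears only later, as Corollary~\ref{cor 0.4.0.21}, deduced from the weighted-composition-operator machinery that is itself built on top of this very lemma. The price is heavier classical input (Julia--Carath\'eodory, fractional-power test functions) and the $m_j=1$ versus $m_j\ge 2$ case split, which the paper's interpolation trick avoids.
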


\begin{proof} 
Denote by 
\[
v=(a_1\circ\varphi)\cdot \prod_{j=p+1}^n (\varphi-\varphi(\xi_j))^{m_j},
\]
so that $u=v/a_1$. The proof is similar to the proof of Lemma~\ref{lemme 3.9}, with some additional difficulties due to the fact that we cannot decompose $v$ as in \eqref{eq:forme-h-lemme3.10} for the indices $i$ such that $1\leq i\leq p$.

Since $\varphi\in\HH(b)\cap H^\infty= \mathfrak M(\HH(b))$, the function $v$ belongs to $\HH(b)$. As in the proof of Lemma~\ref{lemme 3.9}, it is sufficient to prove that 
\begin{equation}\label{eq 4.10} 
v^{(k)} ( \xi_i) = 0, \qquad \mbox{for every }1 \leq i \leq n \hspace{0.1 cm}\mbox{and } 0 \leq k \leq m_i - 1.
\end{equation}
We decompose the proof of \eqref{eq 4.10} into two cases depending $1\leq i\leq p$ or $p+1\leq i\leq n$. 

First, let $1\leq i\leq p$. According to \eqref{eqa}, we have $a_1(\varphi(\xi_i))=0$, whence $v(\xi_i)=0$. In order to prove that $v^{(\ell)} ( \xi_i) = 0$ for every $1\leq \ell\leq m_i-1$, we use a similar argument used in the proof of Theorem~\ref{lemme 4.0.4.0}. We construct a function $f\in H^\infty$ such that the membership of $v\cdot (f\circ\varphi)$ to $\HH(b)$ will force the derivatives of $v$ at $\xi_i$ to vanish up to order $m_i-1$. So, first apply Lemma~\ref{lemme 4.2.1} to construct a sequence $(r_k)_k\subset (0,1)$, $r_k\to 1$ as $k\to\infty$, and such that if $z_k=\varphi(r_k\xi_i)$, $k\geq 1$, then the sequence $(z_k)_k$ is an interpolating sequence for $H^\infty$. Thus, there exists $f\in H^\infty$ such that 
$$f(z_k)= f( \varphi (r_k \xi_j))   = \left\{
    \begin{array}{ll}
        1 & \mbox{if } k \hspace{0.1 cm}  \text{is even} \\
        0 & \mbox{if } k \hspace{0.1 cm}  \text{is odd}
    \end{array} .
\right.$$
Define $h(z):=a_1(z)\prod_{j=p+1}^n(z-\varphi(\xi_j))^{m_j}f(z)$, $z\in\D$, and observe that $h\circ\varphi=v\cdot(f\circ \varphi)$. Since $h\in a_1 H^2\subset\HH(b)$ and since $C_\varphi$ is bounded on $\HH(b)$, the function $h\circ\varphi\in\HH(b)$. In particular, according to \eqref{q}, for every $0\leq \ell\leq m_i-1$, the function $(h\circ\varphi)^{(\ell)}$ has a non-tangential limit at $\xi_i$. It follows from \eqref{eq:taylor} that we can write
\[
(h\circ\varphi)(z)=\sum_{\ell=0}^{m_i-1}\frac{(h\circ\varphi)^{(\ell)}(\xi_i)}{\ell!}(z-\xi_i)^\ell+(z-\xi_i)^{m_i-1}\varepsilon(z),
\]
where $\varepsilon$ is an analytic function on $\D$ which has a zero non-tangential limit at point $\xi_i$. Observe that 
\[
|(h \circ \varphi)(r \xi_i)| \leq |v ( r \xi_i )| \hspace{0.1 cm} \|f\|_{ \infty} \longrightarrow 0, \quad \mbox{as }r\to 1^-.
\]
Hence $( h \circ \varphi )( \xi_i) = 0$. Then we deduce from Taylor's formula above that
\begin{align*}
(h \circ \varphi)'(\xi_i) &= \lim\limits_{k \rightarrow \infty} \frac{(h \circ \varphi)(r_k \xi_i)-(h\circ\varphi)(\xi_i)}{(r_k - 1) \xi_i}\\
&= \lim\limits_{k \rightarrow \infty} \left(\frac{v(r_k \xi_i)-v(\xi_i)}{(r_k - 1) \xi_i} . f(z_k)  \right).
\end{align*}
Since $ \lim\limits_{k \rightarrow \infty} \dfrac{v(r_k \xi_i)}{(r_k - 1) \xi_i} = v' ( \xi_i)$, and $f(z_k) = 1$ if $k$ is even and $0$ if $k$ is odd, it follows necessarily that we should have $v' ( \xi_i ) = 0 = ( h \circ \varphi )' ( \xi_i)$.\\
Now using an induction argument, we easily prove that for every $0 \leq  \ell \leq m_i - 1 ,$
$$ v^{(\ell)} ( \xi_i) = (h \circ \varphi )^{(\ell)} ( \xi_i) = 0,$$
which concludes the proof of \eqref{eq 4.10} when $1\leq i\leq p$.

Second, let $p+1\leq i\leq n$. Observe that in this case, we have $v=(\varphi-\varphi(\xi_i))^{m_i}\psi_i$, where $\psi_i$ is an analytic function which has a non-tangential limit at $\xi_i$. Then, argue as in Lemma~\ref{lemme 3.9}, to get that $v^{(\ell)}(\xi_i)=0$ for every $0\leq\ell\leq m_i-1$. 
\end{proof}

\begin{Remark}
Compared to Lemma~\ref{lemme 3.9}, the conclusion of Lemma~\ref{lemme 0.4.0.17} is no longer true if we replace the assumption that $C_\varphi$ is bounded on $\HH(b)$ by the weaker assumption that $\varphi\in\HH(b)$. See Example~\ref{example-bounded-or-not-8} (c) where we exhibit a function $\varphi$ belonging to $\HH(b)\cap \operatorname{ball}(H^{\infty})$ but the corresponding $u$ is not in $H^2$.  
\end{Remark}

\section{Some characterization of boundedness}\label{sec4}

Recall that $b$ is assumed to be a rational function (but not a finite Blaschke product) in $\operatorname{ball}(H^{\infty})$, $a$ is its pythagorean mate to which we associate the polynomial $a_1(z)=\prod_{j=1}^n (z-\xi_j)^{m_j}$ as in \eqref{eq:definition of a}.  
\subsection{The link with weighted composition operators on $H^2$}
The following result is the key point to characterize the boundedness of composition operators on $\HH(b)$, making a connection with the boundedness of some weighted composition operators on $H^2$. 

\begin{Theorem}\label{thm 0.4.0.20}
Let $ \varphi\in\HH(b)\cap \operatorname{ball}(H^{\infty})$ and assume that $\varphi$ satisfies \eqref{eqa} and \eqref{eqb}. Let $u$ be the function defined by \eqref{defn-u}. Then the following assertions are equivalent: 
\begin{enumerate}
\item[$(i)$] $C_{\varphi}: \HH(b) \rightarrow \HH(b)$ is bounded;
\item[$(ii)$] $W_{u, \varphi}: H^2 \rightarrow H^2$ is bounded, where $W_{u,\varphi}(f)=u (f\circ\varphi)$, $f\in H^2$.
\end{enumerate}
\end{Theorem}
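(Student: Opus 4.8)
The plan is to exhibit $W_{u,\varphi}$ as essentially a compression of $C_\varphi$ to a finite-codimensional subspace of $\HH(b)$, treating the finite-dimensional complement separately via the multiplier results of Section~\ref{sec2}. Set $Q(z)=\prod_{j=p+1}^n(z-\varphi(\xi_j))^{m_j}$, so that $Q\circ\varphi=\prod_{j=p+1}^n(\varphi-\varphi(\xi_j))^{m_j}$ and, directly from \eqref{defn-u}, $a_1u=(a_1\circ\varphi)(Q\circ\varphi)$ on $\D$. Multiplying by $g\circ\varphi$ I obtain the identity
\begin{equation}\label{eq:key-identity}
a_1\,W_{u,\varphi}(g)=C_\varphi(a_1Qg),\qquad g\in H^2,
\end{equation}
between functions holomorphic on $\D$. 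Since the roots $\varphi(\xi_j)$ ($p+1\le j\le n$) lie in $\D$, I factor $Q=OI$, where $I$ is the finite Blaschke product with these zeros and $O(z)=\prod_{j=p+1}^n(1-\overline{\varphi(\xi_j)}z)^{m_j}$ is invertible in $H^\infty$ (its zeros lie off $\overline{\D}$); hence $QH^2=IH^2$, and Lemma~\ref{lemme 0.4.0.24} yields $\HH(b)=a_1IH^2\oplus^\perp_b(a_1K_I\oplus^\perp_b\P_{N-1})$ with $a_1IH^2=a_1QH^2$ of finite codimension.

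For $(i)\Rightarrow(ii)$, I first invoke Lemma~\ref{lemme 0.4.0.17} to get $u\in H^2$. I would then test \eqref{eq:key-identity} on a \emph{polynomial} $g$: now $g\circ\varphi=\sum_kc_k\varphi^k\in H^\infty$ (as $\|\varphi\|_\infty\le1$), so $W_{u,\varphi}(g)=u\,(g\circ\varphi)$ is the product of an $H^2$ and an $H^\infty$ function, hence lies in $H^2$. Thus $a_1W_{u,\varphi}(g)\in a_1H^2$, so in \eqref{eq:key-identity} the function $C_\varphi(a_1Qg)$ has vanishing $\P_{N-1}$-component, and \eqref{eq:norm in h(b)} gives $\vvvert C_\varphi(a_1Qg)\vvvert_b=\|W_{u,\varphi}(g)\|_{H^2}$. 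Boundedness of $C_\varphi$ then yields
\[
\|W_{u,\varphi}(g)\|_{H^2}=\vvvert C_\varphi(a_1Qg)\vvvert_b\le\|C_\varphi\|\,\vvvert a_1Qg\vvvert_b=\|C_\varphi\|\,\|Qg\|_{H^2}\le\|C_\varphi\|\,\|Q\|_\infty\,\|g\|_{H^2}.
\]
Finally I would pass to the limit: polynomials are dense in $H^2$, the estimate makes $(W_{u,\varphi}(g_k))_k$ Cauchy in $H^2$, and since $H^2$-convergence forces locally uniform convergence on $\D$, the limit must coincide pointwise with $u\,(g\circ\varphi)$; this gives $W_{u,\varphi}(g)\in H^2$ with the same bound for every $g\in H^2$, i.e. $(ii)$.

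For $(ii)\Rightarrow(i)$, by Lemma~\ref{lemme 4.2.1.1}(a) it suffices to prove $C_\varphi(\HH(b))\subset\HH(b)$, which I check on the two summands above. On $a_1QH^2$, the hypothesis $W_{u,\varphi}(g)\in H^2$ together with \eqref{eq:key-identity} gives $C_\varphi(a_1Qg)=a_1W_{u,\varphi}(g)\in a_1H^2\subset\HH(b)$. On the finite-dimensional space $a_1K_I\oplus\P_{N-1}$ I argue by linearity on generators. For $p\in\P_{N-1}$, $C_\varphi(p)=p\circ\varphi$ is a polynomial in $\varphi$, and since $\varphi\in\HH(b)\cap H^\infty=\mathfrak{M}(\HH(b))$ by \eqref{eq:multiplier}, each $\varphi^k$, hence $p\circ\varphi$, is a multiplier. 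For $\kappa\in K_I$, the description \eqref{descrip-model-space-FBP} lets me write $\kappa=r/O$ with $r\in\P_{M-1}$, so that $C_\varphi(a_1\kappa)=(a_1\circ\varphi)(r\circ\varphi)\,(O\circ\varphi)^{-1}$; here $a_1\circ\varphi$ and $r\circ\varphi$ are multipliers and $(O\circ\varphi)^{-1}=\prod_{j=p+1}^n(1-\overline{\varphi(\xi_j)}\varphi)^{-m_j}$ is a product of multipliers by Corollary~\ref{cor 0.4.0.19} (applicable since $|\varphi(\xi_j)|<1\le\|\varphi\|_\infty^{-1}$). Thus $C_\varphi(a_1\kappa)\in\mathfrak{M}(\HH(b))\subset\HH(b)$, and combining the summands gives $C_\varphi(\HH(b))\subset\HH(b)$.

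The main obstacle is the direction $(i)\Rightarrow(ii)$: for general $g\in H^2$ there is no reason for $W_{u,\varphi}(g)$ to lie in $H^2$, and a nonzero $\P_{N-1}$-component in \eqref{eq:key-identity} would make $a_1^{-1}C_\varphi(a_1Qg)$ develop a genuine boundary pole at some $\xi_i$. Testing first on polynomials $g$, where $g\circ\varphi\in H^\infty$ forces $W_{u,\varphi}(g)\in H^2$ and thereby kills that component, is exactly what removes the difficulty, after which density does the rest. The only other point requiring care is the factorization $QH^2=IH^2$, which is what permits Lemma~\ref{lemme 0.4.0.24} to be applied with a finite Blaschke product.
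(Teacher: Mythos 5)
Your proof is correct, and the converse direction $(ii)\Rightarrow(i)$ is essentially the paper's own argument: the same splitting of $\HH(b)$ using the finite Blaschke product with zeros $\varphi(\xi_{p+1}),\dots,\varphi(\xi_n)$, the same key identity, and the same multiplier facts \eqref{eq:multiplier} and Corollary~\ref{cor 0.4.0.19}; your appeal to Lemma~\ref{lemme 0.4.0.24} is harmless overkill, since only the algebraic decomposition $H^2=IH^2\oplus K_I$ is needed there. Where you genuinely diverge is in $(i)\Rightarrow(ii)$. The paper works with an arbitrary $f\in H^2$ at once: writing $(a_1Qf)\circ\varphi=a_1g+p$ with $g\in H^2$, $p\in\P_{N-1}$, it observes that $u(f\circ\varphi)=g+p/a_1$ lies in $H^1$ (using $u\in H^2$ from Lemma~\ref{lemme 0.4.0.17} and the Littlewood subordination principle), so that $p/a_1\in H^1$ forces $p=0$ because $\deg p<\deg a_1$ and the roots of $a_1$ lie on $\T$; this gives the containment $W_{u,\varphi}(H^2)\subset H^2$, and boundedness then follows from the closed graph theorem. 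You instead test only on polynomials $g$, for which $W_{u,\varphi}(g)=u\,(g\circ\varphi)\in H^2$ automatically (an $H^2$ function times an $H^\infty$ function), so the $\P_{N-1}$-component vanishes with no $H^1$ reasoning at all; uniqueness of the decomposition \eqref{eq:formula for H(b)} then converts boundedness of $C_\varphi$ into the explicit estimate $\|W_{u,\varphi}(g)\|_2\leq\|Q\|_\infty\,\|C_\varphi\|\,\|g\|_2$, and density of polynomials, together with the fact that $H^2$-convergence implies pointwise convergence on $\D$, finishes the argument. Your route buys a quantitative bound on $\|W_{u,\varphi}\|$ and avoids both the $H^1$/degree-counting step and the closed graph theorem in this direction; the paper's route is shorter in that it treats every $f\in H^2$ in one stroke and needs no approximation step.
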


\begin{proof}
Denote by $\lambda_j=\varphi(\xi_j)$ for $p+1\leq j\leq n$. 

$(i)\implies(ii)$: assume that $C_{ \varphi}$ is bounded on  $ \HH(b)$, and let $f \in H^2$. Consider the function $h$ defined by
\[
h(z):= a_1 (z) \prod_{j=p+1}^n (z- \lambda_j)^{m_j} f(z),\qquad z\in\D.
\]
Since $h \in a_1 H^2 \subset \HH(b)$, the function $h \circ \varphi \in \HH(b)$. According to \eqref{eq:formula for H(b)}, decompose $h\circ\varphi$ as $h \circ \varphi = a_1 g + p$,  with $g \in H^2$ and $p \in \P_{N-1}$. Rewrite this as

\begin{equation}\label{eq 4.12} 
\frac{h \circ \varphi}{a_1} = g + \frac{p}{a_1},
\end{equation}
and observe that
\[
\frac{h \circ \varphi}{a_1}= \frac{(a_1 \circ \varphi) \prod_{j=p+1}^n ( \varphi - \lambda_j)^{m_j}}{a_1} (f \circ \varphi)= u ( f \circ \varphi ) .
\]
Since $C_{ \varphi}$ is bounded on $ \HH(b)$, Lemma \ref{lemme 0.4.0.17} implies that $u \in H^2$ and by the Littlewood subordination principle, $f \circ \varphi \in H^2$. Thus $u. ( f \circ \varphi ) \in H^1$ and \eqref{eq 4.12} implies that $p/a_1\in H^1$. Using the fact that $\mbox{deg}(p)\leq N-1<\mbox{deg}(a_1)$, it is not difficult to see that necessarily $p=0$. Therefore $u.(f \circ \varphi ) = \frac{h \circ \varphi}{a_1} = g\in H^2$ and  $W_{u, \varphi} (H^2) \subset H^2$. It remains to apply the closed graph theorem to get that $W_{u, \varphi}$ is bounded on $H^2$.

$(ii) \implies (i)$: assume now that $W_{u, \varphi}$ is bounded on $H^2$. According to Lemma~\ref{lemme 4.2.1.1}, it is sufficient to prove that for every $f\in\HH(b)$, we have $f\circ\varphi\in\HH(b)$. Using \eqref{eq:formula for H(b)}, this is equivalent to prove that 
\begin{equation}\label{eq1:composition-general833}
p\circ\varphi\in\HH(b)\qquad \mbox{for every polynomial }p\in\P_{N-1},
\end{equation}
and 
\begin{equation}\label{eq2:composition-general833}
(a_1\circ\varphi)(g\circ\varphi)\in\HH(b)\qquad \mbox{for every function }g\in H^2.
\end{equation}

For the property \eqref{eq1:composition-general833}, observe that according to \eqref{eq:multiplier}, $\varphi\in H^\infty\cap\HH(b)=\mathfrak M(\HH(b))$, and since $1\in\HH(b)$, it follows that $\varphi^n\in\HH(b)$ for every $n\geq 0$. Hence for every polynomial $p\in\P_{N-1}$, the function $p\circ\varphi$ belongs to $\HH(b)$.

For the property \eqref{eq2:composition-general833}, let us consider the finite Blaschke product associated to the sequence $ \lambda_{p+1}, \ldots , \lambda_n$ (with multiplicities $m_{p+1}, \ldots,m_n$). In other words, 
\[
B(z) = \prod_{j=p+1}^n \left(\frac{z - \lambda_j}{1- \bar{\lambda}_j z}  \right)^{m_j}, \qquad z \in \mathbb{D}.
\]
Recall that we have an explicit description of $K_B=(BH^2)^\perp$ given by \eqref{descrip-model-space-FBP}. In order to prove \eqref{eq2:composition-general833}, using that $H^2=BH^2\oplus K_B$, we will also decompose the proof into two steps: 
\begin{equation}\label{eq3:composition-general833}
(a_1 \circ \varphi ) ((Bh) \circ \varphi) \in \HH(b),\qquad \mbox{for every function } h\in H^2,
\end{equation}
and 
\begin{equation}\label{eq4:composition-general833}
( a_1 \circ \varphi) \left( \frac{p \circ \varphi}{\prod_{j=p+1}^n (1 - \bar{ \lambda}_j \varphi)^{m_j}}   \right) \in \HH(b),\qquad \mbox{for every polynomial } p\in \mathcal{P}_{N_1-1},
\end{equation}
where $N_1=\sum_{j=p+1}^n m_j$. For \eqref{eq3:composition-general833}, observe that 
\begin{align*}
(a_1 \circ \varphi) \left( (Bh)\circ \varphi \right) &= (a_1 \circ \varphi ) (B \circ \varphi)(h \circ \varphi ) \\
&= a_1 u \frac{1}{\prod_{j=p+1}^n (1 - \bar{\lambda}_j \varphi )^{m_j}} (h \circ \varphi)\\
&= a_1 W_{u, \varphi} (h_1),
\end{align*}
where $h_1 = h/\prod_{j=p+1}^n (1 - \bar{ \lambda}_j z)^{m_j}$. Since $h_1\in H^2$ and $W_{u, \varphi}$ is bounded on $H^2$, we deduce that the function
$(a_1 \circ \varphi) \left((Bh) \circ \varphi \right)$ belongs to $a_1 H^2 \subset \HH(b)$, which proves \eqref{eq3:composition-general833}. 

For \eqref{eq4:composition-general833}, observe that according to Corollary~\ref{cor 0.4.0.19}, for $p+1\leq j\leq n$, the function $(1-\overline{\lambda_j}\varphi)^{-1}\in \mathfrak M(\HH(b))$. Moreover, since $\varphi$ is also a multiplier of $\HH(b)$ and the set $\mathfrak M(\HH(b))$ is an algebra, it follows that 
 \[
\frac{(a_1 \circ \varphi).(p \circ \varphi)}{\prod_{j=p+1}^n (1- \bar{\lambda}_j \varphi)^{m_j}} \in  \mathfrak M(\HH(b))\subset \HH(b),
\]
which proves \eqref{eq4:composition-general833}. Therefore \eqref{eq2:composition-general833} is also satisfied and then $C_{ \varphi}$ is bounded on $ \HH(b)$. 
\end{proof}

It turns out that the boundedness of weighted composition operators on $H^2$ has already been characterized by M. Contreras and A. Hern\'{a}ndez-D\'{\i}az in \cite{MR1864316}. Let us introduce the Borel measure $\mu_{u,\varphi}$ on $\overline{\D}$ defined by
\begin{equation}\label{eq:mesure-image-pondere}
\mu_{u,\varphi}(E)=\int_{\varphi^{-1}(E)\cap\T}|u|^2\,dm
\end{equation}
for every measurable subset $E$ of the closed unit disk $\overline\D$. Recall that a finite Borel measure $\mu$ on  $\overline{\D}$ is called a {\emph{Carleson measure}} if there exists a constant $M>0$ such that 
\[
\mu(S(\xi,r))\leq M r,
\]
for any Carleson window $S(\xi,r)=\{z\in\overline{\D}:|z-\zeta|\leq r\}$, where $\xi\in\T$ and $0<r<1$. It is proved in \cite{MR1864316} that $W_{u,\varphi}$ is bounded on $H^2$ if and only if $\mu_{u,\varphi}$ is a Carleson measure. Moreover, recall that the Carleson property for a measure $\mu$ is equivalent to the embedding $H^2\subset L^2(\mu)$ (as was proved by L. Carleson \cite{MR141789}) and this embedding satisfies {\emph{the reproducing kernel thesis}}, meaning that $H^2\subset L^2(\mu)$ is equivalent to 
\[
\sup_{w\in\D}\int_{\overline{\D}}\frac{1-|w|^2}{|1-\overline{w}\xi|^2}\,d\mu(\xi)=\sup_{w\in\D}\frac{\|k_w\|^2_{L^2(\mu)}}{\|k_w\|_2^2}<\infty.
\]
See \cite[Lecture V11]{MR827223} or \cite[Theorem 5.15]{MR3497010}. See also \cite{MR2672342} for a discussion on the boundedness of weighted composition operators on $H^2$.
 
Using Theorem~\ref{thm 0.4.0.20}, we therefore immediately get from this the following characterization for the boundedness of $C_\varphi$ on $\HH(b)$, when $b$ is a rational function (but not a finite Blaschke product) in $\operatorname{ball}(H^{\infty})$.
\begin{Corollary}\label{Cor 0.4.0.20}
Let $ \varphi\in\HH(b)\cap \operatorname{ball}(H^{\infty})$ and assume that $\varphi$ satisfies \eqref{eqa} and \eqref{eqb}. Let $u$ be the function defined by \eqref{defn-u} and let $\mu_{u,\varphi}$ be the measure defined by \eqref{eq:mesure-image-pondere}. Then the following assertions are equivalent: 
\begin{enumerate}
\item[$(i)$] $C_{\varphi}: \HH(b) \rightarrow \HH(b)$ is bounded;
\item[$(ii)$] $\mu_{u, \varphi}$ is a Carleson measure;
\item[$(iii)$] $\sup_{w\in\D}\displaystyle{  \int_\mathbb{T}} (1 - |w|^2 ) \frac{|u( \xi)|^2}{|1 - \overline{w} \varphi ( \xi)|^2}\,dm(\xi) < + \infty$.
\end{enumerate}
\end{Corollary}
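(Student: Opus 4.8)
The plan is to chain together the two equivalences already available in the paper and then carry out a single change-of-variables computation. First I would apply Theorem~\ref{thm 0.4.0.20}: under the standing hypotheses \eqref{eqa}--\eqref{eqb} on $\varphi$, assertion $(i)$ is equivalent to the boundedness of the weighted composition operator $W_{u,\varphi}$ on $H^2$, where $u$ is the function given by \eqref{defn-u}. Next I would invoke the characterization of Contreras and Hern\'andez-D\'iaz \cite{MR1864316}, which asserts that $W_{u,\varphi}$ is bounded on $H^2$ if and only if the image measure $\mu_{u,\varphi}$ defined in \eqref{eq:mesure-image-pondere} is a Carleson measure. Composing these two equivalences yields $(i)\Leftrightarrow(ii)$ with no additional argument.

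It then remains to establish $(ii)\Leftrightarrow(iii)$. By Carleson's embedding theorem \cite{MR141789} together with the reproducing kernel thesis recalled just above the statement of the corollary, $\mu_{u,\varphi}$ is a Carleson measure if and only if
\[
\sup_{w\in\D}\int_{\overline{\D}}\frac{1-|w|^2}{|1-\overline{w}\xi|^2}\,d\mu_{u,\varphi}(\xi)<\infty.
\]
So the task reduces to recognizing that this $\overline{\D}$-integral against $\mu_{u,\varphi}$ equals the $\T$-integral appearing in $(iii)$.

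The one computational point is the change of variables. Since $\varphi\in H^\infty$, it has non-tangential boundary values $m$-a.e.\ on $\T$, and by its very definition \eqref{eq:mesure-image-pondere} the measure $\mu_{u,\varphi}$ is the pushforward, under this boundary map $\varphi\colon\T\to\overline{\D}$, of the measure $|u|^2\,dm$ on $\T$. Hence for any nonnegative Borel function $F$ on $\overline{\D}$ one has
\[
\int_{\overline{\D}}F\,d\mu_{u,\varphi}=\int_{\T}F(\varphi(\xi))\,|u(\xi)|^2\,dm(\xi).
\]
Applying this with $F(\xi)=(1-|w|^2)\,|1-\overline{w}\xi|^{-2}$ transforms the displayed supremand into exactly
\[
\int_{\T}(1-|w|^2)\frac{|u(\xi)|^2}{|1-\overline{w}\varphi(\xi)|^2}\,dm(\xi),
\]
which is the quantity in $(iii)$. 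This establishes $(ii)\Leftrightarrow(iii)$ and closes the chain of equivalences.

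I do not expect a genuine obstacle: the corollary is a formal consequence of Theorem~\ref{thm 0.4.0.20} and the cited results, and the only step requiring care is identifying $\mu_{u,\varphi}$ as the image of $|u|^2\,dm$ under the a.e.-defined boundary map $\varphi$, which is precisely what justifies the change of variables above. Note that $u\in H^2$, which guarantees that $\mu_{u,\varphi}$ is a finite Borel measure, is automatic under $(i)$ by Lemma~\ref{lemme 0.4.0.17}; when the equivalent conditions fail, $\mu_{u,\varphi}$ simply is not Carleson and the supremum in $(iii)$ is infinite, so the stated equivalence remains correct in all cases.
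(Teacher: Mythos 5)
Your proposal is correct and follows essentially the same route as the paper: Theorem~\ref{thm 0.4.0.20} combined with the Contreras--Hern\'andez-D\'iaz characterization gives $(i)\Leftrightarrow(ii)$, and the reproducing kernel thesis plus the pushforward change of variables gives $(ii)\Leftrightarrow(iii)$, exactly as in the paper's (more terse) argument. Your closing remark on the degenerate case $u\notin L^2(\T)$ (via Lemma~\ref{lemme 0.4.0.17}) is a careful touch the paper leaves implicit, but it is not a departure in method.
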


\begin{proof}
The equivalence $(i)\Longleftrightarrow (ii)$ follows from Theorem~\ref{thm 0.4.0.20} and the result of M. Contreras and A. Hern\'{a}ndez-D\'{\i}az mentioned above. 
The equivalence $(ii)\Longleftrightarrow (iii)$ has been already observed in \cite{MR2672342}: it follows from the fact that $\mu_{u, \varphi}$ is a Carleson measure if and only if 
\[
\sup_{w\in\D}\int_{\overline{\D}}\frac{1-|w|^2}{|1-\overline{w}\xi|^2}\,d\mu_{u,\varphi}(\xi)<\infty,
\]
which, by a change of variable, is equivalent to $(iii)$. 
\end{proof}

\begin{Example}\label{example-bounded-or-not-8}
Let $a(z)=c(z-1)(z+1)^2$, $z\in\D$, where $c$ is some suitable constant such that $a\in\operatorname{ball}(H^{\infty})$ and let $b$ be its pythagorean mate (remind that $b$ can be constructed using the Fej\'er--Riesz Theorem). In the following examples, we consider symbols $\varphi\in\operatorname{ball}(H^{\infty})$ which are analytic in a neighborhood of $\overline{\D}$. Thus, they belong to $\HH(b)$ (because $b$ is a non-extreme point of $\operatorname{ball}(H^{\infty})$).

\begin{enumerate}
\item Let $\varphi(z)=(z+1)/2$, $z\in\D$. Then $\varphi(1)=1$, $\varphi(-1)=0$, and it can be checked that $u(z)=(z+3)^2/32$. Hence $u\in H^\infty$ and by the Littlewood subordination principle, the operator $W_{u,\varphi}$ is bounded on $H^2$, which implies by Theorem~\ref{thm 0.4.0.20} that the operator $C_\varphi$ is bounded on $\HH(b)$. 
\item Let $\varphi(z)=(z-r)/(1-rz)$, $r\in (0,1)$. Then $\varphi(1)=1$, $\varphi(-1)=-1$, and it can be checked that $u(z)=(1-r^2)(1-r)/(1-rz)^3$. Hence $u\in H^\infty$ and by the Littlewood subordination principle, the operator $W_{u,\varphi}$ is bounded on $H^2$. Therefore, according to Theorem~\ref{thm 0.4.0.20}, the operator  $C_\varphi$ is bounded on $\HH(b)$. 
\item Let $\varphi(z)=z^2$, $z\in\D$. Then $\varphi(1)=\varphi(-1)=1$ and it can be checked that $u(z)=(z^2+1)^2/(z+1)$. Hence $u\notin H^2$, and it follows from Lemma~\ref{lemme 0.4.0.17} that the operator $C_\varphi$ is not bounded on $\HH(b)$. 
\end{enumerate}
\end{Example}

In Examples \ref{example-bounded-or-not-8} (a) and (b), the associated function $u\in H^\infty$ and so the boundedness of $W_{u,\varphi}$ on $H^2$ is a simple consequence of the Littlewood subordination principle. We will exhibit, in Example~\ref{example-blaschke-u-non-borne} below, a rational function $b$ and a symbol $\varphi$ for which the associated function $u\in H^2\setminus H^\infty$ but yet generate a bounded operator $C_\varphi$ on $\HH(b)$, and thus a bounded operator $W_{u,\varphi}$ on $H^2$.

Using results from \cite{MR2672342}, we can also obtain an interesting sufficient condition for the boundedness.
\begin{Corollary}\label{Corollary-Gallardo-Partington}
Let $ \varphi\in\HH(b)\cap \operatorname{ball}(H^{\infty})$ and assume that $\varphi$ satisfies \eqref{eqa} and \eqref{eqb}. Let $u$ be the function defined by \eqref{defn-u} and assume that $u\in H^2$. Assume also that for some $\delta>0$, we have
\begin{equation}\label{eq:equation-c-delta}
\supess_{z\in A_\delta}|u(z)|<\infty,
\end{equation}
where $A_\delta:=\{\zeta\in\mathbb T:|\varphi(z)|\geq 1-\delta\}$. Then the operator $C_\varphi$ is bounded on $\HH(b)$. 
\end{Corollary}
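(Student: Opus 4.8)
The plan is to invoke the Carleson-measure characterization from Corollary~\ref{Cor 0.4.0.20}: it suffices to prove that the measure $\mu_{u,\varphi}$ defined in \eqref{eq:mesure-image-pondere} is a Carleson measure on $\overline{\D}$. To this end I would estimate $\mu_{u,\varphi}(S(\xi,r))$ for an arbitrary Carleson window $S(\xi,r)=\{z\in\overline{\D}:|z-\xi|\leq r\}$, splitting the argument into two regimes according to whether $r\geq\delta$ or $r\leq\delta$.

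First, for a large window with $r\geq\delta$, I would simply bound the window mass by the total mass. Since $u\in H^2$, we have $\mu_{u,\varphi}(\overline{\D})=\int_{\T}|u|^2\,dm=\|u\|_{H^2}^2<\infty$, and hence $\mu_{u,\varphi}(S(\xi,r))\leq \|u\|_{H^2}^2\leq(\|u\|_{H^2}^2/\delta)\,r$ because $r/\delta\geq 1$. The heart of the argument is the small-window regime $r\leq\delta$. Here I would use the elementary geometric fact that every $z\in S(\xi,r)$ satisfies $|z|\geq|\xi|-|z-\xi|\geq 1-r\geq 1-\delta$. Consequently, if a boundary point $\zeta\in\T$ has $\varphi(\zeta)\in S(\xi,r)$, then $|\varphi(\zeta)|\geq 1-\delta$, i.e. $\zeta\in A_\delta$; in other words $\varphi^{-1}(S(\xi,r))\cap\T\subset A_\delta$. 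Writing $C_\delta:=\supess_{z\in A_\delta}|u(z)|$, which is finite by \eqref{eq:equation-c-delta}, this yields
\[
\mu_{u,\varphi}(S(\xi,r))=\int_{\varphi^{-1}(S(\xi,r))\cap\T}|u|^2\,dm\leq C_\delta^2\, m\big(\varphi^{-1}(S(\xi,r))\cap\T\big).
\]

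It then remains to control the last quantity, which is precisely the value on $S(\xi,r)$ of the unweighted measure $\nu:=\mu_{1,\varphi}$ obtained by taking $u\equiv 1$ in \eqref{eq:mesure-image-pondere}. Since $C_\varphi=W_{1,\varphi}$ is bounded on $H^2$ for every analytic self-map $\varphi$ by the Littlewood subordination principle, the characterization recalled before Corollary~\ref{Cor 0.4.0.20} shows that $\nu$ is a Carleson measure, so $\nu(S(\xi,r))\leq C\,r$ for a constant $C$ independent of $\xi$ and $r$. Combining the two regimes with $M:=\max\{C_\delta^2 C,\ \|u\|_{H^2}^2/\delta\}$ gives $\mu_{u,\varphi}(S(\xi,r))\leq M\,r$ for every window, so $\mu_{u,\varphi}$ is a Carleson measure and Corollary~\ref{Cor 0.4.0.20} yields the boundedness of $C_\varphi$ on $\HH(b)$.

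I expect the only genuinely substantive point to be the small-window estimate, and in particular the use of the unweighted Carleson property of $\nu$ coming from the boundedness of $C_\varphi$ on $H^2$; the large-window bound and the inclusion $\varphi^{-1}(S(\xi,r))\cap\T\subset A_\delta$ are elementary. The one thing to watch is that the constants $C_\delta$ and $C$ be uniform in the window, which they are by \eqref{eq:equation-c-delta} and by the Carleson property of $\nu$, respectively.
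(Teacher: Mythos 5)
Your proof is correct, but it takes a genuinely different (and more self-contained) route than the paper. The paper disposes of this corollary in one line: by Theorem~\ref{thm 0.4.0.20}, boundedness of $C_\varphi$ on $\HH(b)$ is equivalent to boundedness of $W_{u,\varphi}$ on $H^2$, and the latter is exactly the conclusion of \cite[Theorem 2.9]{MR2672342} under the hypotheses $u\in H^2$ and \eqref{eq:equation-c-delta}, quoted as a black box. You instead pass through the equivalent Carleson-measure formulation of Corollary~\ref{Cor 0.4.0.20} and verify the Carleson bound for $\mu_{u,\varphi}$ by hand: the total-mass bound $\mu_{u,\varphi}(\overline{\D})=\|u\|_{H^2}^2$ handles windows with $r\geq\delta$, while for $r\leq\delta$ the geometric inclusion $\varphi^{-1}(S(\xi,r))\cap\T\subset A_\delta$ lets you dominate the window mass by $C_\delta^2\,\mu_{1,\varphi}(S(\xi,r))$, and $\mu_{1,\varphi}$ is Carleson because $C_\varphi=W_{1,\varphi}$ is bounded on $H^2$ by the Littlewood subordination principle together with the Contreras--Hern\'{a}ndez-D\'{\i}az characterization recalled before Corollary~\ref{Cor 0.4.0.20}. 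In effect you have reproved the special case of \cite[Theorem 2.9]{MR2672342} that the paper invokes; what your route buys is independence from that external reference and a transparent accounting of where each hypothesis enters ($u\in H^2$ for large windows, \eqref{eq:equation-c-delta} for small ones), whereas the paper's route buys brevity and places the corollary within the known theory of weighted composition operators. One cosmetic caveat: the inclusion $\varphi^{-1}(S(\xi,r))\cap\T\subset A_\delta$ holds only up to an $m$-null set (where the radial limits of $\varphi$ fail to exist), which is harmless for the integrals but worth saying.
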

\begin{proof}
According to Theorem~\ref{thm 0.4.0.20}, the operator $C_\varphi$ is bounded on $\HH(b)$ if and only if the operator $W_{u,\varphi}$ is bounded on $H^2$. Now the conclusion follows applying \cite[Theorem 2.9]{MR2672342}.
\end{proof}

In the rest of this section, we give some additional necessary and/or sufficient conditions more tractable for the boundedness of the composition operator $C_\varphi$ on $\HH(b)$. 

\subsection{Angular derivatives and boundedness}
We now explain how the existence of angular derivative in the sense of Carath\'eodory for the symbol $\varphi$ is involved in the boundedness of $C_\varphi$. In particular, we extend a result of D. Sarason and J.N. Silva in our context. See \cite[Theorem 2]{MR1945292}. Recall first that a function $\varphi\in\operatorname{ball}(H^{\infty})$ has an {\emph{angular derivative in the sense of Carath\'eodory} (briefly an ADC) at a point $\xi\in\T$ if $\varphi$ and $\varphi'$ both have a non-tangential limit at $\xi$ and $|\varphi(\xi)|=1$. A well-known characterization of Carath\'eodory says that $\varphi$ has an ADC at $\xi\in\T$ if and only if 
\[
c:=\liminf_{z\to\xi}\frac{1-|\varphi(z)|}{1-|z|}<\infty.
\]
Moreover in this case, $c=|\varphi'(\xi)|>0$. See for instance \cite[Theorem 2.44]{MR1397026}. D. Sarason also observed that if $\varphi$ has an ADC at $\xi$, then
\begin{equation}\label{Sarason-ADC343535}
z\longmapsto \frac{\varphi(z)-\varphi(\xi)}{z-\xi} \mbox{ is in }\HH(\varphi).
\end{equation}
 See \cite{MR946094} or \cite[Theorem 21.1]{MR3617311}.

\begin{Corollary}\label{cor 0.4.0.21}
Let $ \varphi\in\HH(b)\cap \operatorname{ball}(H^{\infty})$ and assume that $\varphi$ satisfies  \eqref{eqa} and \eqref{eqb}. For $1\leq k\leq p$, let $1\leq \ell_k\leq n$ such that $\varphi(\xi_k)=\xi_{\ell_k}$. Assume that the operator $C_\varphi$ is bounded on $\HH(b)$. Then for every $1\leq k\leq p$, we have $m_k\leq m_{\ell_k}$. Furthermore, if for some $1\leq k\leq p$, we have $m_k=m_{\ell_k}$, then the function $\varphi$ has an ADC at $\xi_k$. 
\end{Corollary}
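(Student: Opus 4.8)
The plan is to transfer the boundedness of $C_\varphi$ to the weighted composition operator $W_{u,\varphi}$ on $H^2$ via Theorem~\ref{thm 0.4.0.20}, and then to test the adjoint of $W_{u,\varphi}$ against the Cauchy kernels of $H^2$ so as to read off the radial behaviour of $\varphi$ at $\xi_k$. First, since $C_\varphi$ is bounded on $\HH(b)$, Theorem~\ref{thm 0.4.0.20} gives that $W_{u,\varphi}$ is bounded on $H^2$, where $u$ is the function in \eqref{defn-u}. Writing $k_w(z)=(1-\overline{w}z)^{-1}$ for the reproducing kernel of $H^2$, so that $\|k_w\|_{H^2}^2=(1-|w|^2)^{-1}$, one has for every $f\in H^2$ the identity $\langle W_{u,\varphi}f,k_w\rangle_2=u(w)f(\varphi(w))=\langle f,\overline{u(w)}k_{\varphi(w)}\rangle_2$, whence $W_{u,\varphi}^*k_w=\overline{u(w)}k_{\varphi(w)}$. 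Comparing $\|W_{u,\varphi}^*k_w\|_{H^2}=|u(w)|(1-|\varphi(w)|^2)^{-1/2}$ with $\|W_{u,\varphi}^*\|\,\|k_w\|_{H^2}$ and using $\|W_{u,\varphi}^*\|=\|W_{u,\varphi}\|$, I obtain the pointwise estimate
\[
|u(w)|^2\le \|W_{u,\varphi}\|^2\,\frac{1-|\varphi(w)|^2}{1-|w|^2},\qquad w\in\D.
\]

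Next I would fix $1\le k\le p$ (so $\varphi(\xi_k)=\xi_{\ell_k}\in\T$) and factor $u$ near $\xi_k$. Using $a_1\circ\varphi=\prod_{i=1}^n(\varphi-\xi_i)^{m_i}$ and $a_1(z)=\prod_{i=1}^n(z-\xi_i)^{m_i}$, I write
\[
u(z)=\frac{(\varphi(z)-\xi_{\ell_k})^{m_{\ell_k}}}{(z-\xi_k)^{m_k}}\,g(z),
\]
where $g$ gathers the factors $(\varphi-\xi_i)^{m_i}$ for $i\ne\ell_k$, the factors $(\varphi-\lambda_j)^{m_j}$ (with $\lambda_j:=\varphi(\xi_j)\in\D$) for $p+1\le j\le n$, and the factor $\prod_{i\ne k}(z-\xi_i)^{-m_i}$. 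Since $\varphi(\xi_k)=\xi_{\ell_k}$ is distinct from every $\xi_i$ with $i\ne\ell_k$ and from every interior point $\lambda_j$, the function $g$ is analytic on $\D$ and has a finite \emph{non-zero} non-tangential limit at $\xi_k$. Evaluating the estimate above along the radius $w=r\xi_k$, putting $t_r:=\dfrac{1-|\varphi(r\xi_k)|}{1-r}$, and combining $|w-\xi_k|=1-r$, the reverse triangle inequality $|\varphi(r\xi_k)-\xi_{\ell_k}|\ge 1-|\varphi(r\xi_k)|$, the bound $|g(r\xi_k)|\ge c_0>0$ for $r$ near $1$, together with $1-|\varphi|^2\le 2(1-|\varphi|)$ and $1-r^2\ge 1-r$, a short simplification yields
\[
t_r^{\,2m_{\ell_k}-1}\,(1-r)^{2(m_{\ell_k}-m_k)}\le C,\qquad r\text{ close to }1,
\]
for a constant $C$ independent of $r$.

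The conclusion then comes by squeezing $t_r$ from below. By the Schwarz--Pick lemma, $1-|\varphi(r\xi_k)|\ge\frac{1-|\varphi(0)|}{1+|\varphi(0)|}(1-r)$, so $t_r$ is bounded below by a positive constant. If $m_k>m_{\ell_k}$, then $2(m_{\ell_k}-m_k)\le -2$, so the displayed inequality forces $t_r^{\,2m_{\ell_k}-1}\to0$, hence $t_r\to0$, contradicting the Schwarz--Pick lower bound; therefore $m_k\le m_{\ell_k}$. If instead $m_k=m_{\ell_k}$, the inequality becomes $t_r^{\,2m_{\ell_k}-1}\le C$, so $(t_r)_r$ is bounded, whence $\liminf_{z\to\xi_k}\frac{1-|\varphi(z)|}{1-|z|}\le\limsup_{r\to1}t_r<\infty$; by Carath\'eodory's characterization recalled above (and since $|\varphi(\xi_k)|=1$), $\varphi$ admits an ADC at $\xi_k$.

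The conceptual engine is thus a single inequality, whose two regimes are selected by the sign of $m_{\ell_k}-m_k$, and the main obstacle is the bookkeeping in the second step: one must check that every auxiliary factor collected in $g$ really has a finite non-zero non-tangential limit at $\xi_k$ (this is exactly where the distinctness of the $\xi_i$ and the hypotheses \eqref{eqa}--\eqref{eqb} are used) and track the two powers of $1-r$ so that they combine into the single exponent $2(m_{\ell_k}-m_k)$. I note that when $m_k\ge2$ the ADC assertion is already immediate, since then $\varphi'$ has a non-tangential limit at $\xi_k$ by \eqref{q} and $|\varphi(\xi_k)|=1$; the argument above handles all cases uniformly and in particular covers the genuinely new case $m_k=m_{\ell_k}=1$.
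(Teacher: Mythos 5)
Your proposal is correct and follows essentially the same route as the paper's own proof: transfer boundedness to $W_{u,\varphi}$ via Theorem~\ref{thm 0.4.0.20}, test $W_{u,\varphi}^*$ on the Cauchy kernels along the radius $r\xi_k$, factor $u$ near $\xi_k$ (your function $g$ plays exactly the role of the paper's quotient $c_{k,r}/\kappa_{\ell,r}$ and the factors $|\varphi(r\xi_k)-\lambda_j|^{m_j}$), and arrive at the same inequality $t_r^{\,2m_{\ell_k}-1}(1-r)^{2(m_{\ell_k}-m_k)}\le C$ before concluding with Carath\'eodory's criterion. The only (harmless) variation is at the final contradiction: where the paper rules out $\liminf_{r\to1}t_r=0$ by appealing to Carath\'eodory's theorem (a finite liminf must equal $|\varphi'(\xi_k)|>0$), you instead use the Schwarz--Pick bound $t_r\ge(1-|\varphi(0)|)/(1+|\varphi(0)|)>0$, a slightly more elementary justification of the same point.
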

\begin{proof}
It follows from our assumption and Theorem~\ref{thm 0.4.0.20} that the operator $W_{u,\varphi}$ is bounded on $H^2$, where $u$ is defined by \eqref{defn-u}. Hence, there is a constant $C>0$ such that, for every $g\in H^2$, we have
\[
\|W_{u , \varphi}^* g\|_2 \leq C \|g\|_2 . 
\]
Apply this inequality with $g=k_\lambda$, $\lambda\in\D$, where $k_\lambda(z)=(1-\overline{\lambda}z)^{-1}$, and use the well-known fact that $W_{u,\varphi}^*k_\lambda=\overline{u(\lambda})k_{\varphi(\lambda)}$ to get 
\begin{equation}\label{eq1:cor-04-0-21}
 |u ( \lambda)| \| k_{\varphi (  \lambda)}\|_2 \leq C \|k_{ \lambda}\|_2. 
 \end{equation}
Let's fix $1\leq k\leq p$, and for simplicity, denote by $\ell=\ell_k$, so that $\varphi(\xi_k)=\xi_\ell$. Applying \eqref{eq1:cor-04-0-21} with $\lambda=r\xi_k$, for $0<r<1$, gives
\[
 \frac{1 - r^2}{1 - | \varphi ( r \xi_k )|^2}|u( r \xi_k )|^2  \leq C^2 .
\]
With the definition of $u$, we obtain
\[
\frac{1 - r^2}{1 - | \varphi ( r \xi_k)|^2} \prod_{i=1}^n \left| \frac{ \varphi(r \xi_k ) - \xi_i }{r \xi_k - \xi_i} \right|^{2 m_i} \prod_{j = p+1}^n | \varphi (r \xi_k) - \lambda_j|^{2 m_j}  \leq C^2,
\]
where $ \lambda_j = \varphi ( \xi_j) \in \mathbb{D}$, for $p+1\leq j\leq n$. The last inequality can be rewritten as 
\[
\frac{1-r^2}{1 - | \varphi ( r \xi_k)|^2} \frac{ | \varphi(r \xi_k) - \xi_l |^{2 m_l}}{|r \xi_k - \xi_k|^{2 m_k}} \frac{\kappa_{\ell,r}}{c_{k,r}}\prod_{j=p+1}^n |\varphi(r\xi_k)-\lambda_j|^{2m_j}\leq C^2,
\]
where 
\[
\kappa_{\ell,r}=\prod_{i=1,i\neq\ell}^n|\varphi(r\xi_k)-\xi_i|^{2m_i}\quad\mbox{and}\quad c_{k,r}=\prod_{i=1,i\neq k}^n|r\xi_k-\xi_i|^{2m_i}.
\]
But, using that $ |\varphi (r \xi_k) - \xi_{ l}| \geq 1 - | \varphi ( r \xi_k)|$, we have 
\begin{align*}
\frac{1-r^2}{1 - | \varphi ( r \xi_k)|^2} . \frac{| \varphi (r \xi_k) - \xi_l|^{2 m_l}}{|r \xi_k - \xi_k|^{2 m_k}}
 &\geq \frac{(1-r)(1-|\varphi(r \xi_k )|)^{2 m_l}}{(1 - |\varphi ( r \xi_k)|) (1+| \varphi (r \xi_k)|)(1 -r)^{2m_k}}\\
 & \geq \frac{1}{2} \frac{(1 - |\varphi(r \xi_k)|)^{2 m_l - 1}}{(1-r)^{2m_k - 1}},
\end{align*} 
from which it follows that
 \[ 
 \frac{ ( 1 - | \varphi(r \xi_k)  |)^{2 m_l - 1}}{(1  - r)^{2 m_k - 1}} \leq 2  C^2  \frac{c_{k,r}}{\kappa_{\ell,r} \prod_{j = p+1}^n | \varphi (r \xi_k) - \lambda_j|^{2 m_j}}.
 \]
This can be rewritten as 
\begin{equation}\label{eq:ADC-composition-borne-48343}
\left( \frac{  1 - | \varphi(r \xi_k)  |}{1  - r} \right)^{2 m_l - 1}  \leq 2  C^2  \frac{c_{k,r}(1-r)^{2m_k-2m_{\ell}}}{\kappa_{\ell,r} \prod_{j = p+1}^n | \varphi (r \xi_k) - \lambda_j|^{2 m_j}}.
 \end{equation}
 Observe now that 
\[
\frac{c_{k,r}}{\kappa_{\ell,r}\prod_{j=p+1}^n|\varphi(r\xi_k)-\lambda_j|^{2m_j}}\to C_{k,\ell},\quad\mbox{as }r\to 1,
\]
where
\[
C_{k,\ell}:=\frac{ \prod_{i=1, i \neq k}^n |\xi_k - \xi_i |^{2 m_i} }{ \prod_{i = 1, i \neq l}^n|   \xi_l - \xi_i |^{2 m_i}} \frac{1}{ \prod_{j = p+1}^n | \xi_l - \lambda_j|^{2 m_j}}<\infty.
\]
If $m_k>m_\ell$, then it follows from \eqref{eq:ADC-composition-borne-48343} that
\[
\liminf_{r \rightarrow 1} \left( \frac{  1 - | \varphi(r \xi_k)  |}{1  - r} \right)^{2 m_l - 1}=0,
\]
which is not possible by Carath\'eodory's theorem. Therefore, $m_k\leq m_\ell$. Moreover, if $m_k=m_\ell$, then it also follows from \eqref{eq:ADC-composition-borne-48343} that 
\[
\liminf_{r \rightarrow 1}  \frac{  1 - | \varphi(r \xi_k)  |}{1  - r} < \infty,
\]
and by Carath\'eodory's theorem, the function $ \varphi$ has an ADC at point $ \xi_k$. 
 \end{proof}

\begin{Remark}
Let's come back to Examples~\ref{example-bounded-or-not-8}. In these examples, $a$ has two zeros $\zeta_1=1$ and $\zeta_2=-1$, with multiplicities $m_1=1$ and $m_2=2$. If $\varphi\in\HH(b)\cap \operatorname{ball}(H^{\infty})$ with $\varphi(-1)=1$, then, according to Corollary~\ref{cor 0.4.0.21}, the operator $C_\varphi$ cannot be bounded on $\HH(b)$. In other words, if $C_\varphi$ is bounded on $\HH(b)$, then either $\varphi(-1)\in\D$ or $\varphi(-1)=-1$. In particular, we recover the conclusion of Example~\ref{example-bounded-or-not-8} (c).
\end{Remark}

The following result is a partial converse of Lemma~\ref{lemme 0.4.0.17}.
\begin{Corollary}\label{cor-u-dans-Hvarphi-Cvarphi-bdd}
Let $ \varphi\in\HH(b)\cap \operatorname{ball}(H^{\infty})$ which satisfies \eqref{eqa} and \eqref{eqb}. Let $u$ be the function defined by \eqref{defn-u}. Assume that $u\in\HH(\varphi)$. Then the operator $C_\varphi$ is bounded on $\HH(b)$.
\end{Corollary}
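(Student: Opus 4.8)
The plan is to reduce everything, via Theorem~\ref{thm 0.4.0.20}, to proving that the weighted composition operator $W_{u,\varphi}$ is bounded on $H^2$, where $u$ is the function in \eqref{defn-u}. So the whole task is to extract boundedness of $W_{u,\varphi}$ from the single hypothesis $u\in\HH(\varphi)$, and I would do this by a reproducing-kernel (Grammian) estimate tested on the Szegő kernels $k_\lambda(z)=(1-\overline\lambda z)^{-1}$, whose linear span is dense in $H^2$. The guiding computation is the identity $W_{u,\varphi}^*k_\lambda=\overline{u(\lambda)}\,k_{\varphi(\lambda)}$ already used in the proof of Corollary~\ref{cor 0.4.0.21}. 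To be rigorous before boundedness is known, I would define a map $L$ on the span of the $k_\lambda$ by $L\big(\sum_i c_i k_{\lambda_i}\big)=\sum_i c_i\overline{u(\lambda_i)}\,k_{\varphi(\lambda_i)}$; the estimate below shows simultaneously that $L$ is well defined and bounded, and since $(W_{u,\varphi}f)(\lambda)=\langle f,Lk_\lambda\rangle$ for every $f\in H^2$, one gets $W_{u,\varphi}=L^{*}$, hence $W_{u,\varphi}$ is bounded.

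The key reduction is a kernel domination. For $h=\sum_i c_i k_{\lambda_i}$ one computes directly
\[
\|Lh\|_2^2=\sum_{i,j}c_i\overline{c_j}\,\frac{\overline{u(\lambda_i)}\,u(\lambda_j)}{1-\overline{\varphi(\lambda_i)}\varphi(\lambda_j)},
\qquad
\|h\|_2^2=\sum_{i,j}c_i\overline{c_j}\,\frac{1}{1-\overline{\lambda_i}\lambda_j},
\]
so it suffices to establish, in the sense of positive semidefinite matrices,
\[
\left[\frac{\overline{u(\lambda_i)}\,u(\lambda_j)}{1-\overline{\varphi(\lambda_i)}\varphi(\lambda_j)}\right]_{i,j}
\ \preceq\ \|u\|_{\HH(\varphi)}^{2}\left[\frac{1}{1-\overline{\lambda_i}\lambda_j}\right]_{i,j}.
\]

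I would prove this domination from two ingredients. First, membership $u\in\HH(\varphi)$, applied through the Cauchy--Schwarz inequality in $\HH(\varphi)$ to $w=\sum_i c_i k^\varphi_{\lambda_i}$ (where $k^\varphi_\lambda$ is the kernel \eqref{eq:original kernel H(b)} with $b=\varphi$), forces
\[
\big[\,\|u\|_{\HH(\varphi)}^{2}\,k^\varphi_{\lambda_i}(\lambda_j)-\overline{u(\lambda_i)}\,u(\lambda_j)\,\big]_{i,j}\ \succeq\ 0 .
\]
Second, I would use the elementary factorization $\dfrac{1}{1-\overline{\lambda_i}\lambda_j}=\dfrac{k^\varphi_{\lambda_i}(\lambda_j)}{1-\overline{\varphi(\lambda_i)}\varphi(\lambda_j)}$, which allows the difference of the two kernels above to be written as the Schur (Hadamard) product of the positive semidefinite matrix just displayed with the Grammian $\big[(1-\overline{\varphi(\lambda_i)}\varphi(\lambda_j))^{-1}\big]_{i,j}$ of the Szegő kernels $k_{\varphi(\lambda_i)}$, which is itself positive semidefinite. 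By the Schur product theorem the difference is positive semidefinite, which is exactly the desired domination.

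The main obstacle is precisely the placement of the factor $1-\overline{\varphi(\lambda_i)}\varphi(\lambda_j)$: the operator Grammian carries it in the denominator, whereas membership in $\HH(\varphi)$ naturally produces that quantity in the numerator of $k^\varphi$. The whole argument hinges on the factorization identity together with the Schur product theorem to convert one into the other; everything else is routine. Once the domination holds, it yields $\|Lh\|_2\le\|u\|_{\HH(\varphi)}\|h\|_2$ on the dense span, hence on all of $H^2$, and the identification $W_{u,\varphi}=L^{*}$ gives the boundedness of $W_{u,\varphi}$, and thus of $C_\varphi$ on $\HH(b)$ by Theorem~\ref{thm 0.4.0.20}.
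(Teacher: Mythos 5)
Your proof is correct, and at the decisive step it follows a genuinely different route from the paper's. Both arguments begin with the same reduction: by Theorem~\ref{thm 0.4.0.20}, boundedness of $C_\varphi$ on $\HH(b)$ is equivalent to boundedness of $W_{u,\varphi}$ on $H^2$. The paper then deduces the latter from $u\in\HH(\varphi)$ by invoking a theorem of M.~Jury \cite{MR2336583}: the operator $A=C_\varphi^*T_u^*$, with $T_u^*$ densely defined on Szeg\H{o} kernels by $T_u^*k_\lambda=\overline{u(\lambda)}\,k_\lambda$, is bounded on $H^2$, and a one-line reproducing-kernel computation identifies $A^*=W_{u,\varphi}$. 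You instead prove the boundedness of $W_{u,\varphi}$ from scratch: the RKHS membership criterion (equivalently, your Cauchy--Schwarz argument applied to $\sum_i c_i k^\varphi_{\lambda_i}$) converts $u\in\HH(\varphi)$ into positive semidefiniteness of the matrices $\bigl[\|u\|_{\HH(\varphi)}^2\,k^\varphi_{\lambda_i}(\lambda_j)-\overline{u(\lambda_i)}\,u(\lambda_j)\bigr]_{i,j}$; the identity $1/(1-\overline{\lambda_i}\lambda_j)=k^\varphi_{\lambda_i}(\lambda_j)/(1-\overline{\varphi(\lambda_i)}\varphi(\lambda_j))$ together with the Schur product theorem (Hadamard product with the Grammian of the kernels $k_{\varphi(\lambda_i)}$, which is positive semidefinite) then gives the domination $\|Lh\|_2\le\|u\|_{\HH(\varphi)}\|h\|_2$ on the dense span of kernels, and the identification $W_{u,\varphi}=L^*$ goes through exactly as you describe. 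In effect you have re-proved, in a self-contained way, precisely the special case of Jury's theorem that the paper uses as a black box (your kernel-domination argument is essentially the known proof of that result). What the paper's route buys is brevity and reliance on the literature; what yours buys is a self-contained elementary argument (Cauchy--Schwarz plus Schur products), the explicit norm bound $\|W_{u,\varphi}\|\le\|u\|_{\HH(\varphi)}$, and no need to handle the densely defined unbounded operator $T_u^*$ and its adjoint. One cosmetic remark: your displays mix the two sesquilinearity conventions ($c_i\overline{c_j}$ in the Grammian computations versus $\overline{c_i}c_j$ coming out of Cauchy--Schwarz); since all matrices involved are Hermitian and the inequalities are quantified over all coefficient vectors this is harmless, but fix one convention when writing the argument up.
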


\begin{proof}
Since $u\in\HH(\varphi)$, it follows from a result of M. Jury \cite{MR2336583} that the operator $A= C_{ \varphi}^* T_u^*$ is bounded from $H^2$ into itself, where $T_u^*$ is viewed here as an unbounded operator, densely defined on reproducing kernels of $H^2$ by the formula 
\[
 T_u^* k_{ \lambda} = \overline{u( \lambda)} k_{\lambda} , \hspace{0.2 cm} \lambda \in \mathbb{D}.
 \]
Hence the operator $A$ has an adjoint $A^* : H^2 \rightarrow H^2 $ which is bounded and satisfies for every $f\in H^2$ and every $\lambda\in\D$ 
\begin{align*}
(A^*f)( \lambda) &= < A^* f, k_{ \lambda}>_2 = <f, A k_{ \lambda}>_2 \\
&= <f, C_{ \varphi}^* T_u^* k_{ \lambda}>_2\\
&= <C_{ \varphi} f, \overline{u( \lambda)} k_{ \lambda}>_2\\
&= u( \lambda) f ( \varphi ( \lambda )).
\end{align*}
For every $f \in H^2$ and $ \lambda \in \mathbb{D}$, we then have 
\[
(W_{u, \varphi} f) ( \lambda) = (A^* f)( \lambda).
\]
Finally, $W_{u, \varphi}=A^*$ is bounded on $H^2$, which implies, according to Theorem~\ref{thm 0.4.0.20}, that $C_{ \varphi}$ is bounded on $ \HH(b)$. 
\end{proof}

\begin{Remark}
It should be noted that the condition $u\in\HH(\varphi)$ is not, in general, a necessary condition for the boundedness of the operator $C_\varphi$ on $\HH(b)$. Indeed, if we come back to Example~\ref{example-bounded-or-not-8} (b), the symbol $\varphi$ is a Blaschke factor associated to point $r\in (0,1)$, and then $\HH(\varphi)=K_\varphi=\mathbb C k_r$, where $k_r(z)=(1-rz)^{-1}$. Moreover, the operator $C_\varphi$ is bounded on $\HH(b)$, whereas the associated function $u$ is not a constant multiple of $k_r$, and thus $u\notin \HH(\varphi)$. 
\end{Remark}
Nevertheless, the next result gives a particular situation where the converse of Corollary~\ref{cor-u-dans-Hvarphi-Cvarphi-bdd}, as well as the converse of Corollary~\ref{cor 0.4.0.21} on angular derivatives, hold true.

\begin{Corollary}\label{cor 004}
Let $b(z)=(1+z)/2$, $z\in\D$, and let $ \varphi \in \HH(b)\cap \operatorname{ball}(H^{\infty})$. Assume that $ \varphi(1)=1$ and let $u$ be the function defined by \eqref{defn-u}. Then the following assertions are equivalent:
\begin{enumerate}
\item[$(i)$] the operator $C_{ \varphi}: \HH(b) \rightarrow \HH(b)$ is bounded;
\item[$(ii)$] the function $\varphi$ has an ADC at point $1$;
\item[$(iii)$] the function $u$ belongs to $\HH(\varphi)$. 
\end{enumerate}
\end{Corollary}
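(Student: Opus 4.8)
The plan is to prove the three assertions equivalent by establishing the cycle $(i)\Rightarrow(ii)\Rightarrow(iii)\Rightarrow(i)$, each arrow being a direct appeal to a result already obtained above, once the objects $a_1$ and $u$ are made explicit in this special case. So the first step is purely bookkeeping: since $b(z)=(1+z)/2$, the pythagorean mate is $a(z)=(1-z)/2$, whose unique zero on $\T$ is $\xi_1=1$ with multiplicity $m_1=1$. Hence $n=1$, $N=1$ and $a_1(z)=z-1$. The hypothesis $\varphi(1)=1$ says exactly that $\varphi(\xi_1)=\xi_1\in Z_\T(a)$, so in the notation of \eqref{eqa}--\eqref{eqb} we are in the extreme case $p=n=1$. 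Consequently the product $\prod_{j=p+1}^n(\varphi-\varphi(\xi_j))^{m_j}$ appearing in \eqref{defn-u} is empty, and the weight $u$ collapses to the difference quotient
\[
u(z)=\frac{(a_1\circ\varphi)(z)}{a_1(z)}=\frac{\varphi(z)-1}{z-1}=\frac{\varphi(z)-\varphi(1)}{z-1},\qquad z\in\D.
\]
This identification of $u$ with Sarason's difference quotient is the observation that ties conditions $(ii)$ and $(iii)$ together.

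With this in hand I would argue as follows. For $(i)\Rightarrow(ii)$ I invoke Corollary~\ref{cor 0.4.0.21}: here the only relevant index is $k=1$, and since $\varphi(\xi_1)=\xi_1$ we have $\ell_1=1$, so that $m_k=m_{\ell_k}=m_1=1$. Thus the equality case of that corollary applies and yields that $\varphi$ has an ADC at $\xi_1=1$. For $(ii)\Rightarrow(iii)$ I use Sarason's observation \eqref{Sarason-ADC343535}: if $\varphi$ has an ADC at $1$, then the map $z\mapsto(\varphi(z)-\varphi(1))/(z-1)$ belongs to $\HH(\varphi)$, and by the computation above this map is precisely $u$. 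Finally, $(iii)\Rightarrow(i)$ is nothing but Corollary~\ref{cor-u-dans-Hvarphi-Cvarphi-bdd}, which asserts that $u\in\HH(\varphi)$ forces $C_\varphi$ to be bounded on $\HH(b)$.

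I do not expect a genuine analytic obstacle in this argument; the content is the structural remark that in this rank-one situation the weight $u$ of \eqref{defn-u} is literally the difference quotient whose membership in $\HH(\varphi)$ characterizes the ADC at $1$. The only point requiring care is verifying that the multiplicity condition of Corollary~\ref{cor 0.4.0.21} holds with \emph{equality}, so that the stronger ADC conclusion (and not merely the inequality $m_k\le m_{\ell_k}$) is available; this is automatic here because $m_1=1$ is the only multiplicity in play and $\varphi$ maps $\xi_1$ to itself. The result should therefore be read as the statement that, for $b(z)=(1+z)/2$ and a symbol fixing the boundary point $1$, the three obstructions to boundedness coming from the general theory all coincide.
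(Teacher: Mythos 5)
Your proposal is correct and follows exactly the paper's own proof: identify $a_1(z)=z-1$ and $u(z)=(\varphi(z)-1)/(z-1)$, then close the cycle $(i)\Rightarrow(ii)$ via Corollary~\ref{cor 0.4.0.21} (equality of multiplicities, here trivially $m_1=m_1=1$), $(ii)\Rightarrow(iii)$ via Sarason's observation \eqref{Sarason-ADC343535}, and $(iii)\Rightarrow(i)$ via Corollary~\ref{cor-u-dans-Hvarphi-Cvarphi-bdd}. The only difference is that you spell out the bookkeeping ($p=n=1$, empty product in \eqref{defn-u}) in more detail than the paper does.
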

\begin{proof}
Observe that when $b(z)=(1+z)/2$, then $a_1(z)=z-1$ and since $\varphi(1)=1$, we have $u(z)=(\varphi(z)-1)/(z-1)$, $z\in\D$. The implication $(i)\implies (ii)$ follows directly from Corollary~\ref{cor 0.4.0.21}. The implication $(ii)\implies (iii)$ follows from \eqref{Sarason-ADC343535}, whereas the implication $(iii)\implies (i)$ follows from Corollary~\ref{cor-u-dans-Hvarphi-Cvarphi-bdd}.
\end{proof}

\begin{Remark}
As we already mentioned, in the case when $b(z)=(1+z)/2$, then $\HH(b)$ coincides with the local Dirichlet space $\mathcal D(\delta_1)$ associated to the Dirac measure $\delta_1$ at point $1$. In particular, we may translate Corollary~\ref{cor 004} in the context of $\mathcal D(\delta_1)$ and recover a result of D. Sarason and J.N. Silva \cite[Theorem 2]{MR1945292}. However, our proof is different from Sarason and Silva's proof. More precisely, they use a characterization of boundedness which is based on some counting function adapted to the situation of $\mathcal D(\delta_1)$ space, whereas we use a characterization based on the boundedness of some related weighted composition operator on $H^2$.
\end{Remark}

\begin{Example}\label{example-blaschke-u-non-borne}
Let $b(z)=(z+1)/2$ and let $(\lambda_n)_{n \geq 1}$ be a sequence in $\mathbb{D}$ tending to $1$ and satisfying
\begin{equation}\label{eq 3.15.1}
\sum_{n \geq 1} \frac{1-|\lambda_n|}{|\lambda_n - 1|^2} < \infty.
\end{equation}
(For example, we can take $\lambda_n = (1-2^{-n})\exp(\frac{i}{n})$, $n\geq 1$). Let us now consider the Blaschke product $B$ associated to $(\lambda_n)_n$. A result of  O. Frostman \cite{MR12127} says that, under the condition \eqref{eq 3.15.1}, the function $B$ has an ADC at point $1$. Multiplying $B$ by a unimodular constant if necessary, we can assume that $B(1)=1$. The property \eqref{Sarason-ADC343535} now implies that the function
 $$u(z) = \frac{B(z) - 1}{z-1},\qquad z\in\D,$$ 
belongs to $K_B$ and then, in particular to $H^2$. Since $B(z) = (z-1)u(z) +1$ and $u \in H^2$, then $B \in \HH(b)$. According to Corollary~\ref{cor 004},  the operator $C_B$ is bounded on $ \HH(b)$. Observe that in that case $u\notin H^\infty$  because
\[
|u(\lambda_n)| = \left|  \frac{-1}{\lambda_n - 1}\right| \longrightarrow \infty , \hspace{0.2 cm} n \rightarrow \infty.
\]
\end{Example}

\subsection{The case when $\varphi(\xi_j)\in\D$, $1\leq j\leq n$.} We have given a necessary condition on the behavior of $\varphi$ at a point $\xi_k\in Z_\T(a)$ when $\varphi(\xi_k)\in\T$. In the other direction, when $\varphi(\xi_j)\in\D$, $1\leq j\leq n$, we now give a sufficient condition. 

\begin{Corollary}\label{cor 0.4.0.22}
Let  $\varphi\in\HH(b)\cap \operatorname{ball}(H^{\infty})$ and assume that for every $1\leq j\leq n$, we have 
\begin{equation}\label{eq:limsupuniformement-loin-de-un-343434}
\limsup_{z\to \xi_j}|\varphi(z)|<1.
\end{equation}
Then the operator $C_{ \varphi}$ is bounded on $ \HH(b)$. 
\end{Corollary}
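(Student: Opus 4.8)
The plan is to deduce boundedness from the sufficient condition recorded in Corollary~\ref{Corollary-Gallardo-Partington}, which only demands control of the weight $u$ on the set where $\varphi$ approaches the boundary, rather than global boundedness of $u$. First I would observe that the hypothesis \eqref{eq:limsupuniformement-loin-de-un-343434} forces $|\varphi(\xi_j)|<1$ for every $j$ (the non-tangential limit $\varphi(\xi_j)$ exists since $\varphi\in\HH(b)$, and is dominated by the $\limsup$), so condition \eqref{eqa} is void and \eqref{eqb} holds for all indices; thus we are in the regime $p=0$ and $u$ in \eqref{defn-u} is the product over all $1\le j\le n$. The next remark is that $u$ factors as $u=(a_1\circ\varphi)\,\psi$, where $\psi$ is the function from Lemma~\ref{lemme 3.9}. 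Since $a_1\circ\varphi\in H^\infty$ (a polynomial composed with a bounded map) and $\psi\in H^2$ by that lemma, it follows immediately that $u\in H^2$, which is the first hypothesis of Corollary~\ref{Corollary-Gallardo-Partington}.

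It then remains to check \eqref{eq:equation-c-delta}, i.e.\ that $u$ is essentially bounded on $A_\delta=\{\zeta\in\T:|\varphi(\zeta)|\ge 1-\delta\}$ for some $\delta>0$. The geometric heart of the argument is that \eqref{eq:limsupuniformement-loin-de-un-343434} supplies, for each $j$, a radius $\rho_j>0$ and $\eta_j>0$ with $|\varphi|\le 1-\eta_j$ on $D(\xi_j,\rho_j)\cap\D$; passing to non-tangential limits, the same bound holds a.e.\ for the boundary values on the arc $\T\cap D(\xi_j,\rho_j/2)$. Hence, choosing $0<\delta<\min_j\eta_j$ makes $A_\delta$ disjoint (up to a null set) from each $D(\xi_j,\rho_j/2)$, so $A_\delta$ lies at a positive distance $d$ from $Z_\T(a)$. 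On such a set the denominator of \eqref{defn-u} obeys $|a_1(\zeta)|=\prod_j|\zeta-\xi_j|^{m_j}\ge d^{N}$, while the numerator $(a_1\circ\varphi)\prod_j(\varphi-\varphi(\xi_j))^{m_j}$ is bounded by a constant depending only on $N$ (using $\|\varphi\|_\infty\le 1$). Therefore $\supess_{\zeta\in A_\delta}|u(\zeta)|\le C\,d^{-N}<\infty$, and Corollary~\ref{Corollary-Gallardo-Partington} yields that $C_\varphi$ is bounded on $\HH(b)$.

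The one point that requires care is the passage from the interior estimate in \eqref{eq:limsupuniformement-loin-de-un-343434} to an a.e.\ bound for the boundary values on a fixed arc around $\xi_j$; there I would justify that the non-tangential approach at boundary points sufficiently close to $\xi_j$ stays inside the neighborhood where $|\varphi|\le 1-\eta_j$. I do not expect to be able to prove $u\in H^\infty$ globally, since $\varphi$ need not be smooth at the points $\xi_j$ and the difference quotient $(\varphi(z)-\varphi(\xi_j))/(z-\xi_j)$ may be unbounded; this is exactly why the localized criterion of Corollary~\ref{Corollary-Gallardo-Partington}, asking only for boundedness of $u$ on $A_\delta$, is the appropriate tool rather than a direct Littlewood subordination argument.
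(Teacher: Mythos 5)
Your proof is correct, but it follows a genuinely different route from the paper's. The paper proves the corollary by verifying, by hand, the reproducing-kernel-thesis condition (iii) of Corollary~\ref{Cor 0.4.0.20}: it splits $\T$ into a union $V$ of arcs around the points $\xi_j$ (where, by the hypothesis, $|\varphi|\le L<1$ a.e., so the kernel denominator satisfies $|1-\overline{w}\varphi|\ge 1-L$ and the bound follows from $u\in H^2$) and the complement $\T\setminus V$ (where $a_1$ is bounded below, hence $u$ is bounded, and the integral is controlled by $(1-|w|^2)\|C_\varphi k_w\|_2^2\le \|C_\varphi\|_{\mathcal L(H^2)}^2$ via Littlewood subordination). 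You instead delegate all integral estimates to the localized criterion of Corollary~\ref{Corollary-Gallardo-Partington} (i.e.\ to \cite[Theorem 2.9]{MR2672342}), and only verify its two hypotheses: $u\in H^2$, obtained exactly as in the paper through the factorization $u=(a_1\circ\varphi)\psi$ and Lemma~\ref{lemme 3.9}, and the essential boundedness of $u$ on $A_\delta$, which you derive from the geometric observation that for small $\delta$ the set $A_\delta$ stays at positive distance from $Z_\T(a)$, so that $|a_1|$ is bounded below there while the numerator of \eqref{defn-u} is trivially bounded. The two arguments exploit the same dichotomy (near the $\xi_j$'s the symbol is far from $\T$; where the symbol is near $\T$ one is far from the zeros of $a_1$), but packaged dually: your version is shorter and conceptually cleaner, at the price of resting on the cited external theorem, whereas the paper's version is self-contained modulo Corollary~\ref{Cor 0.4.0.20} and makes the Carleson-type estimates explicit. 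Your handling of the one delicate point --- transferring the interior bound $|\varphi|\le 1-\eta_j$ on $D(\xi_j,\rho_j)\cap\D$ to an a.e.\ bound on the boundary values on $\T\cap D(\xi_j,\rho_j/2)$ via non-tangential (or radial) limits --- is sound, since an essential-supremum statement is insensitive to the null set where boundary values fail to exist.
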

\begin{proof}
In order to show that $C_\varphi$ is bounded on $\HH(b)$, we will use Corollary~\ref{Cor 0.4.0.20} and prove that
\begin{equation}\label{eq 3.15}
\sup_{w \in \mathbb{D}} \int_{\mathbb{T}} (1 - |w|^2) \frac{|u ( \xi)|^2}{|1 - \overline{w} \varphi (\xi)|^2} dm (\xi) < \infty ,
\end{equation}
where $u$ is defined by
\[
u =\frac{ (a_1 \circ \varphi) \prod_{j=1}^n ( \varphi - \lambda_j)^{m_j} }{a_1} = \frac{v}{a_1} 
\]
with $v =  (a_1 \circ \varphi) \prod_{j=1}^n ( \varphi - \lambda_j)^{m_j}$. (According to \eqref{eq:limsupuniformement-loin-de-un-343434}, observe that $\lambda_j=\varphi(\xi_j)\in\D$ for every $1\leq j\leq n$, whence $p=0$ in the notation of \eqref{eqa}). It follows from our assumption that there exists $0<L<1$ and $\delta>0$ such that for every $1\leq j\leq n$ and for every $z\in\D$, we have 
\[
|z-\xi_j|<\delta\implies |\varphi(z)|\leq L.
\]
Let $V_{\xi_j}=\{\xi\in\T:|\xi-\xi_j|<\delta\}$ and $V=\bigcup_{j=1}^n V_{ \xi_j}$. Then, for almost all $\xi\in V$, we have $|\varphi(\xi)|\leq L$. 

On one hand, observe now that on $\T\setminus V$, the function $a_1$ is bounded below, and then $u=v/a_1$ is bounded by some constant $C$. Thus, for every $w \in \mathbb{D}$, we get
\begin{align*} 
\int_{ \mathbb{T} \setminus V} (1 - |w|^2 ) \frac{|u( \xi)|^2}{|1 - \overline{w} \varphi ( \xi)|^2} dm( \xi) &\leq C^2 (1 - |w|^2 ) \int_{ \mathbb{T}} \frac{1}{|1 - \overline{w} \varphi ( \xi)|^2} dm( \xi)\\
&= C^2 (1 - |w|^2 ) \| C_{ \varphi} k_w \|_2^2 
\end{align*}
Now, by the Littlewood subordination principle, the operator $C_{ \varphi}$ is bounded on $H^2$, whence 
\[
\|C_{\varphi} k_w\|_2^2 \leq \|C_{ \varphi}\|^2 \|k_w\|_2^2 = \|C_{ \varphi}\|^2 (1 - |w|^2 )^{-1} .
\]
Finally, for every $w \in \mathbb{D}$, we obtain
$$ \int_{ \mathbb{T}  \setminus V} (1 - |w|^2) \frac{|u( \xi)|^2}{|1 - \overline{w} \varphi ( \xi)|^2} dm( \xi) \leq C^2  \|C_{ \varphi}\|^2 .$$
On the other hand, for $w \in \mathbb{D}$ and for almost all $ \xi \in V$, we have 
\[
|1 - \overline{w} \varphi( \xi)| \geq 1 - |w| \hspace{0.1 cm } | \varphi( \xi)| \geq 1 - L |w| \geq 1 - L > 0.
\]
Moreover, $a_1 \circ \varphi$ is in $H^{ \infty}$ and by Lemma~\ref{lemme 3.9}, the function $\prod_{j=1}^n (\varphi(z)-\lambda_j)^{m_j}/(z-\xi_j)^{m_j}$ is in $H^2$. Hence $u \in H^2$, which gives
\[ 
\int_V \frac{(1 - |w|^2)|u(\xi)|^2}{|1 - \overline{w} \varphi (\xi)|^2} dm( \xi) \leq \frac{\|u\|_2^2}{(1- L)^2}.
\]
Finally \eqref{eq 3.15} is satisfied, from which it follows that the operator $C_{\varphi}$ is bounded on $\HH(b)$.
 \end{proof}
 
\begin{Example}\label{exmp 7} 
Let $b\in\operatorname{ball}(H^{\infty})$ be a rational function (but not a finite Blaschke product), and let $a_1(z)=\prod_{j=1}^n(z-\xi_j)^{m_j}$ be the associated polynomial as in \eqref{eq:definition of a}. Let $\psi\in H^\infty$ and let 
\[
\varphi(z)=c\prod_{j=1}^n(z-\xi_j)^{\alpha_j}\psi(z),\qquad z\in\D,
\] 
where for $1\leq j\leq n$, $\alpha_j$ is a real number satisfying $\alpha_j>m_j-1/2$, and $c$ is a constant such that $\|\varphi\|_\infty=1$. Observe that the function $\varphi:\D\longmapsto \D$ is analytic and, since $\alpha_j>1/2$, for every $1\leq j\leq n$, we have 
\[
\limsup_{z\to\xi_j}|\varphi(z)|=0<1.
\]
Moreover, since 
\[
\frac{\varphi(z)}{a_1(z)}=c\frac{\psi(z)}{\prod_{j=1}^n (z-\xi_j)^{m_j-\alpha_j}},\qquad z\in\D,
\]
and $2(m_j-\alpha_j)<1$, it is easy to check that $\varphi/a_1$ belongs to $H^2$. In particular, $\varphi\in a_1 H^2\subset\HH(b)$. Thus, we can apply Corollary~\ref{cor 0.4.0.22} which implies that $C_\varphi$ is bounded on $\HH(b)$. 
\end{Example} 

\section{Some characterization of compactness and the Hilbert--Schmidt property}\label{sec5}
As for the boundedness, we establish a link between the compactness of the operator $C_\varphi$ on $\HH(b)$ and the compactness of the operator $W_{u,\varphi}$ on $H^2$, where $u$ is defined as in \eqref{defn-u}. In this section, we still assume that $b\in\operatorname{ball}(H^{\infty})$ is a rational function (but not a finite Blaschke product), and $a$ is its pythagorean mate to which we associate the polynomial $a_1(z)=\prod_{j=1}^n(z-\xi_j)^{m_j}$ as in \eqref{eq:definition of a}.

\subsection{The link with weighted composition operator for compactness}

The following general fact will be of use to us. It is probably folklore and we leave the proof to the reader. Here $\mathcal L(\HH,\mathcal K)$ denotes the space of bounded linear operator from an Hilbert space $\HH$ into another Hilbert space $\mathcal K$.
\begin{Lemma}\label{lem-op-compact}
Let $T_1\in\mathcal L(\HH_1,\HH_2)$, $T_2\in\mathcal L(\HH_3,\HH_4)$, $V_1\in\mathcal L(\HH_3,\HH_1)$ and $V_2\in\mathcal L(\HH_4,\HH_2)$. Assume that $T_1V_1=V_2T_2$ and assume that $V_1$  is an isomorphism and $V_2$  is an isometry. Then $T_1$ is compact (respectively Hilbert--Schmidt) if and only if $T_2$ is compact (respectively Hilbert--Schmidt). 
\end{Lemma}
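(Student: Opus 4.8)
The plan is to exploit the intertwining relation $T_1 V_1 = V_2 T_2$ together with the elementary fact that the classes of compact operators and of Hilbert--Schmidt operators are stable under composition with bounded operators on either side: if $S$ is compact (respectively Hilbert--Schmidt) and $A$, $B$ are bounded operators for which the product $ASB$ is well defined, then $ASB$ is again compact (respectively Hilbert--Schmidt), with $\|ASB\|_{HS}\leq\|A\|\,\|S\|_{HS}\,\|B\|$ in the Hilbert--Schmidt case. Thus it suffices to write each of $T_1$, $T_2$ as a product of the other sandwiched between bounded operators.

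First I would record the two consequences of the hypotheses that make this work. Since $V_1$ is an isomorphism, it is invertible and its inverse $V_1^{-1}\in\mathcal L(\HH_1,\HH_3)$ is bounded. Since $V_2$ is an isometry, we have $V_2^*V_2=I_{\HH_4}$, the identity on $\HH_4$, and $V_2^*\in\mathcal L(\HH_2,\HH_4)$ is bounded.

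Next, from $T_1 V_1 = V_2 T_2$ I would derive the two factorizations. Multiplying on the right by $V_1^{-1}$ gives
\[
T_1 = V_2\, T_2\, V_1^{-1},
\]
so that if $T_2$ is compact (respectively Hilbert--Schmidt), then $T_1$ is compact (respectively Hilbert--Schmidt) by the stability property above. Conversely, multiplying the relation on the left by $V_2^*$ and using $V_2^*V_2=I_{\HH_4}$ yields
\[
T_2 = V_2^*V_2\,T_2 = V_2^*\, T_1\, V_1,
\]
so that if $T_1$ is compact (respectively Hilbert--Schmidt), then so is $T_2$. Combining the two implications gives the desired equivalence in both the compact and the Hilbert--Schmidt case.

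There is essentially no obstacle here; the only point requiring a little care is that $V_2$ is \emph{merely} an isometry and need not be invertible, so one cannot simply cancel it on the left. The identity $V_2^*V_2=I_{\HH_4}$ is precisely what lets us recover $T_2$ from $T_1$, and this is the single place where the isometry hypothesis on $V_2$ (rather than mere boundedness) is used.
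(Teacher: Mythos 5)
Your proof is correct and complete. Note that the paper itself gives no proof of this lemma --- the authors declare it ``probably folklore'' and leave it to the reader --- so there is nothing to compare against; your argument (the factorizations $T_1 = V_2 T_2 V_1^{-1}$ and $T_2 = V_2^* T_1 V_1$, combined with the two-sided ideal property of the compact and Hilbert--Schmidt classes) is exactly the standard argument the authors presumably had in mind, and you correctly identify the one subtle point, namely that $V_2$ need not be invertible and the identity $V_2^* V_2 = I$ is what substitutes for cancellation on the left.
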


The following result will be the key in our characterization of compactness for the composition operators on $\HH(b)$.
\begin{Theorem}\label{thm 0.4.0.27}
Let $ \varphi\in\HH(b)\cap \operatorname{ball}(H^{\infty})$ and assume that $\varphi$ satisfies \eqref{eqa} and \eqref{eqb}. Let $u$ be the function defined by \eqref{defn-u}.  Then the following assertions are equivalent:
\begin{enumerate}
\item[(i)] $C_{\varphi}: \HH(b) \rightarrow \HH(b)$ is compact (respectively Hilbert--Schmidt).
\item[(ii)] $W_{u, \varphi}: H^2 \rightarrow H^2$ is compact (respectively Hilbert--Schmidt). 
\end{enumerate}
\end{Theorem}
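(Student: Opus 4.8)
The plan is to deduce the result from the transfer Lemma~\ref{lem-op-compact} together with the finite-codimension description of $\HH(b)$ furnished by Lemma~\ref{lemme 0.4.0.24}. The guiding observation is the intertwining identity already implicit in the proof of Theorem~\ref{thm 0.4.0.20}: composing with $\varphi$ on a suitable copy of $H^2$ sitting inside $\HH(b)$ reproduces exactly the weighted composition operator $W_{u,\varphi}$. Before anything else I note that, since a compact operator (in particular a Hilbert--Schmidt one) is bounded, whichever of (i) or (ii) we assume already forces \emph{both} $C_\varphi$ and $W_{u,\varphi}$ to be bounded, by Theorem~\ref{thm 0.4.0.20}. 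Thus throughout I may assume both operators are bounded, which is precisely what is needed to place them in the $\mathcal{L}(\cdot,\cdot)$ hypotheses of Lemma~\ref{lem-op-compact}.

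First I would set up the intertwining. Write $\lambda_j=\varphi(\xi_j)\in\D$ for $p+1\le j\le n$ (as in \eqref{eqb}) and put $q(z)=\prod_{j=p+1}^n (z-\lambda_j)^{m_j}$. Define $V_1\colon H^2\to\HH(b)$ and $V_2\colon H^2\to\HH(b)$ by $V_1 h=a_1 q h$ and $V_2 h = a_1 h$. Using the very definition \eqref{defn-u} of $u$ one has $(a_1\circ\varphi)(q\circ\varphi)=a_1 u$, so that for every $h\in H^2$,
\[
C_\varphi(V_1 h)=(a_1\circ\varphi)(q\circ\varphi)(h\circ\varphi)=a_1\,u\,(h\circ\varphi)=a_1\,W_{u,\varphi}h=V_2(W_{u,\varphi}h),
\]
that is, $C_\varphi V_1=V_2 W_{u,\varphi}$. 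This is the whole point of introducing the weight $u$.

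Next I would check the structural hypotheses of Lemma~\ref{lem-op-compact}. Factor $q=Bd$, where $B(z)=\prod_{j=p+1}^n\bigl(\frac{z-\lambda_j}{1-\bar\lambda_j z}\bigr)^{m_j}$ is the finite Blaschke product appearing in the proof of Theorem~\ref{thm 0.4.0.20} and $d(z)=\prod_{j=p+1}^n(1-\bar\lambda_j z)^{m_j}$ satisfies $d,1/d\in H^\infty$. By \eqref{eq:norm in h(b)}, the $\HH(b)$-norm of $V_1 h=a_1 q h$ equals $\|qh\|_{H^2}=\|dh\|_{H^2}$ (as $B$ is inner), which is comparable to $\|h\|_{H^2}$ since $d,1/d\in H^\infty$; moreover the range of $V_1$ is $a_1 q H^2=a_1 B H^2$ because $dH^2=H^2$. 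Hence $V_1$ is an isomorphism of $H^2$ onto $a_1 B H^2$, while \eqref{eq:norm in h(b)} shows $V_2$ is an isometry. Taking $T_1=C_\varphi|_{a_1BH^2}$ (viewed as an operator into $\HH(b)$) and $T_2=W_{u,\varphi}$, Lemma~\ref{lem-op-compact} applies and yields that $C_\varphi|_{a_1BH^2}$ is compact (resp. Hilbert--Schmidt) if and only if $W_{u,\varphi}$ is.

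Finally I would remove the restriction. By Lemma~\ref{lemme 0.4.0.24}, $a_1 B H^2$ is a \emph{closed subspace of finite codimension} in $\HH(b)$, with $\HH(b)=a_1BH^2\oplus_b^\perp\bigl(a_1K_B\oplus_b^\perp\P_{N-1}\bigr)$. Writing $P$ for the orthogonal projection of $\HH(b)$ onto $a_1BH^2$, one has $C_\varphi=(C_\varphi|_{a_1BH^2})P+C_\varphi(I-P)$, where $C_\varphi(I-P)$ has finite rank. Since the compact (resp. Hilbert--Schmidt) operators form an ideal stable under finite-rank perturbations, $C_\varphi$ is compact (resp. Hilbert--Schmidt) on $\HH(b)$ if and only if $C_\varphi|_{a_1BH^2}$ is, and combining this with the previous paragraph gives (i)$\Leftrightarrow$(ii). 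The main subtlety to handle carefully is exactly this last reduction: $C_\varphi$ does \emph{not} leave $a_1BH^2$ invariant, so $C_\varphi|_{a_1BH^2}$ must be viewed as $C_\varphi\circ\iota$ from $a_1BH^2$ into all of $\HH(b)$; the forward implication is then ideal-stability (compose a compact/Hilbert--Schmidt operator with the bounded inclusion), while the converse rests on $I-P$ being of finite rank.
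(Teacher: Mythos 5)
Your proof is correct and takes essentially the same approach as the paper: it too restricts $C_\varphi$ to the finite-codimension subspace $a_1BH^2$ (Lemma~\ref{lemme 0.4.0.24}), intertwines that restriction with a weighted composition operator through multiplication operators, and concludes via Lemma~\ref{lem-op-compact}. The only (harmless) difference is bookkeeping: the paper first trades $W_{u,\varphi}$ for $W_{\psi,\varphi}$ with $\psi=u/\prod_{j=p+1}^n(1-\overline{\lambda_j}\varphi)^{m_j}$, so that its $V_1f=a_1Bf$ is an onto isometry, whereas you absorb this invertible factor into $V_1h=a_1qh$, which is then merely an isomorphism onto $a_1BH^2$.
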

\begin{proof}
According to Theorem~\ref{thm 0.4.0.20}, we can assume that both operators $C_\varphi$ and $W_{u,\varphi}$ are bounded respectively on $\HH(b)$ and $H^2$.  For $p+1\leq j\leq n$, denote by $\lambda_j := \varphi (\xi_j) \in \mathbb{D}$ and let
\[
\psi = \frac{u}{\prod_{j=p+1}^n (1 - \bar{\lambda}_j \varphi)^{m_j}} .
\]
On one hand, since $ \prod_{j=p+1}^n ( 1 - \bar{\lambda}_j \varphi )^{m_j}$ is in $H^{ \infty}$ and invertible in $H^{ \infty}$ (because $|1-\overline{\lambda_j}\varphi|\geq 1-|\lambda_j|>0$), the operator  $W_{u, \varphi}$ is bounded (respectively compact or Hilbert--Schmidt) on $H^2$ if and only if the operator $W_{ \psi , \varphi}$ is bounded (respectively compact or Hilbert--Schmidt) on $H^2$. 

On the other hand, let us consider the finite Blaschke product $B$ associated to the sequence $\lambda_{p+1}, \ldots, \lambda_{n}$, with multiplicities $m_{p+1},\ldots,m_{n}$,  and denote by $R_1$ the restriction of $C_\varphi$ to the closed subspace $a_1BH^2$ of $\HH(b)$, 
$$ \begin{array}{ccccl}
R_1&:& a_1BH^2& \longrightarrow &\HH(b) \\
 & & f & \longmapsto & R_1f  = C_{ \varphi} f = f \circ \varphi.
\end{array}
$$
According to Lemma~\ref{lemme 0.4.0.24}, the subspace $a_1BH^2$ is of finite codimension in $\HH(b)$ from which it follows that the operator $C_\varphi$ is compact (respectively Hilbert--Schmidt) on $\HH(b)$ if and only if the operator $R_1$ is compact (respectively Hilbert--Schmidt) from $a_1BH^2$ into $\HH(b)$. 

Therefore it is sufficient to prove that $R_1:a_1BH^2\rightarrow \HH(b)$ is compact (respectively Hilbert--Schmidt) if and only if $W_{ \psi , \varphi}:H^2\rightarrow H^2$ is compact (respectively Hilbert--Schmidt). To make the link between these two operators, let us introduce now
$$ \begin{array}{ccccc}
V_1&:&H^2 & \to &a_1 B H^2 \\
 & & f & \mapsto & V_1f  =a_1 B f,
\end{array}
$$
and
$$ \begin{array}{ccccc}
V_2&:&H^2 & \to & \HH(b) \\
 & & g & \mapsto & V_2g  =a_1 g.
\end{array}
$$
The subspace $a_1BH^2$ is viewed here as a closed subspace of $\HH(b)$ and in particular is equipped with the $\HH(b)$ norm $\vvvert\cdot\vvvert_b$. Thus, according to \eqref{eq:norm in h(b)}, the operators $V_1$ and $V_2$ are isometries.  Moreover, the operator $V_1$ is onto. Let us check that
\begin{equation}\label{eq 3.19}
 R_1 V_1 = V_2 W_{\psi , \varphi}.
\end{equation}  
To this purpose, take $f \in H^2$. We have 
\begin{align*}
(R_1 V_1 )(f) = R_1 (a_1 B f ) &= (a_1 \circ \varphi ) (B \circ \varphi ) (f \circ \varphi ) \\
&= a_1 \frac{(a_1 \circ \varphi )}{a_1} (B \circ \varphi ) (f \circ \varphi).
\end{align*}
But observe that
\begin{align*}
\frac{(a_1 \circ \varphi)}{a_1} (B \circ \varphi) &= \frac{(a_1 \circ \varphi )}{a_1} \prod_{j=p+1}^n \frac{( \varphi - \lambda_j)^{m_j}}{( 1 - \bar{\lambda}_j \varphi)^{m_j}}\\
&= \frac{u}{ \prod_{j = p+1}^n (1 - \bar{ \lambda}_j \varphi)^{m_j}} \\
&= \psi.
\end{align*}
Thus, for every $f \in H^2$, we have  
\begin{align*}
(R_1 V_1)(f) &= a_1 \psi (f \circ \varphi) \\
&= a_1 W_{\psi , \varphi} f \\
&= (V_2 W_{ \psi , \varphi}) (f) ,
\end{align*}
which proves \eqref{eq 3.19}. It remains now to apply Lemma~\ref{lem-op-compact} to get that the operator $R_1: a_1 B H^2 \rightarrow \HH(b)$ is compact (respectively Hilbert--Schmidt) if and only if the operator $W_{ \psi , \varphi} : H^2 \to H^2$ is compact (respectively Hilbert--Schmidt). This concludes the proof.
\end{proof}

It turns out that the compactness of weighted composition operators on $H^2$ has also been characterized by M. Contreras and A. Hern\'{a}ndez-D\'{\i}az in \cite{MR1864316}.  Recall that a finite Borel measure $\mu$ on  $\overline{\D}$ is called a {\emph{vanishing Carleson measure}} if 
\[
\lim_{r\to 0}\sup_{\xi\in\T}\frac{\mu(S(\xi,r))}{r}=0.
\]
Then, it is proved in \cite{MR1864316} that the operator $W_{u,\varphi}$ is compact on $H^2$ if and only if $\mu_{u,\varphi}$ is a vanishing Carleson measure, where $\mu_{u,\varphi}$ is defined by \eqref{eq:mesure-image-pondere}. Moreover, the vanishing Carleson property for a measure $\mu$ is equivalent to the compactness of the embedding $H^2\subset L^2(\mu)$, which satisfies the reproducing kernel thesis, meaning that $H^2$ embeds compactly into $L^2(\mu)$ if and only if 
\[
\int_{\overline{\D}}\frac{1-|w|^2}{|1-\overline{w}\xi|^2}\,d\mu(\xi)\to 0,\quad \mbox{as }|w|\to 1.
\]
See \cite{Power} for a discussion on the vanishing Carleson measures and \cite{MR2672342} for a discussion on the compactness of the weighted composition operators on $H^2$.\\

Using Theorem~\ref{thm 0.4.0.27}, we therefore immediately get from the previous discussion the following characterization for the compactness of the composition operators on $\HH(b)$.  
\begin{Corollary}\label{cor 0.4.0.27}
Let $ \varphi\in\HH(b)\cap \operatorname{ball}(H^{\infty})$  and assume that $\varphi$ satisfies \eqref{eqa} and \eqref{eqb}. Let $u$ be the function defined by \eqref{defn-u} and let $\mu_{u,\varphi}$ be the measure defined by \eqref{eq:mesure-image-pondere}. Then the following assertions are equivalent:
\begin{enumerate}
\item[(i)] $C_{\varphi}: \HH(b) \rightarrow \HH(b)$ is compact.
\item[(ii)] $ \mu_{u,  \varphi}$ is a vanishing Carleson measure.
\item[$(iii)$] $ \displaystyle{ \int_{  \mathbb{T}}} \dfrac{(1 - |w|^2) |u ( \xi )|^2}{|1 - \overline{w} \varphi ( \xi)|^2} dm( \xi) \longrightarrow 0 $ as $|w| \rightarrow 1 .$
\end{enumerate}
\end{Corollary}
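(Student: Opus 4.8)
The plan is to deduce this characterization directly from the machinery already assembled, in exact parallel with the proof of the boundedness version in Corollary~\ref{Cor 0.4.0.20}. The key engine is Theorem~\ref{thm 0.4.0.27}, which transfers the compactness problem from the de Branges--Rovnyak space to the Hardy space: $C_\varphi$ is compact on $\HH(b)$ if and only if the weighted composition operator $W_{u,\varphi}$ is compact on $H^2$.

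First I would establish $(i)\Longleftrightarrow(ii)$. By Theorem~\ref{thm 0.4.0.27}, compactness of $C_\varphi$ on $\HH(b)$ is equivalent to compactness of $W_{u,\varphi}$ on $H^2$. The criterion of M.~Contreras and A.~Hern\'andez-D\'iaz \cite{MR1864316} recalled above then identifies compactness of $W_{u,\varphi}$ on $H^2$ with the assertion that the pushforward (image) measure $\mu_{u,\varphi}$ defined in \eqref{eq:mesure-image-pondere} is a vanishing Carleson measure. Concatenating these two equivalences yields $(i)\Longleftrightarrow(ii)$ immediately.

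Next I would treat $(ii)\Longleftrightarrow(iii)$ by invoking the reproducing kernel thesis for vanishing Carleson measures: a finite Borel measure $\mu$ on $\overline{\D}$ is a vanishing Carleson measure if and only if $\int_{\overline{\D}}\frac{1-|w|^2}{|1-\overline{w}\xi|^2}\,d\mu(\xi)\to 0$ as $|w|\to 1$. Applying this with $\mu=\mu_{u,\varphi}$ and carrying out the change of variable encoded in the definition \eqref{eq:mesure-image-pondere}---which rewrites an integral over $\overline{\D}$ against $\mu_{u,\varphi}$ as an integral over $\T$ against $|u|^2\,dm$ precomposed with $\varphi$---converts the defining integral into precisely the quantity appearing in $(iii)$. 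This is the same substitution already used for the boundedness analogue in Corollary~\ref{Cor 0.4.0.20}.

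Once Theorem~\ref{thm 0.4.0.27} is in hand there is no genuine obstacle: the whole argument is simply the chaining of that transfer theorem, the Contreras--Hern\'andez-D\'iaz compactness criterion, and the reproducing kernel thesis. The only points demanding a little care are verifying that the reproducing kernel thesis holds in the vanishing (rather than merely bounded) regime and that the change of variable is legitimate; both are available in the literature (see \cite{Power,MR2672342}), so the corollary follows at once.
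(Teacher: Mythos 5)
Your proposal is correct and follows exactly the paper's own route: the paper derives this corollary ``immediately'' from Theorem~\ref{thm 0.4.0.27} combined with the Contreras--Hern\'andez-D\'iaz criterion (compactness of $W_{u,\varphi}$ on $H^2$ iff $\mu_{u,\varphi}$ is a vanishing Carleson measure) and the reproducing kernel thesis for vanishing Carleson measures, with the same change of variable turning the integral against $\mu_{u,\varphi}$ into the integral in $(iii)$. Nothing is missing; your attention to the vanishing-regime version of the reproducing kernel thesis is exactly the point the paper delegates to \cite{Power,MR2672342}.
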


We also have a characterization for the Hilbert-Schmidt property. 
\begin{Corollary}\label{thm 4.0.21.0}
Let $ \varphi\in\HH(b)\cap \operatorname{ball}(H^{\infty})$ and assume that $\varphi$ satisfies \eqref{eqa} and \eqref{eqb}.  Let $u$ be the function defined by \eqref{defn-u}. Then the following assertions are equivalent:
\begin{enumerate}
\item[(i)] $C_{\varphi}: \HH(b) \rightarrow \HH(b)$ is Hilbert--Schmidt.
\item[(ii)] $ \displaystyle{  \int_{ \mathbb{T}}} \dfrac{|u( \xi)|^2}{1- | \varphi( \xi )|^2} dm (\xi )< \infty .$ 
\end{enumerate}
\end{Corollary}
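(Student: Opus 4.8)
The plan is to read off the statement from the reduction already established in Theorem~\ref{thm 0.4.0.27}, which asserts that $C_\varphi$ is Hilbert--Schmidt on $\HH(b)$ if and only if the weighted composition operator $W_{u,\varphi}$ is Hilbert--Schmidt on $H^2$. Consequently the entire content of the corollary is an explicit computation of the Hilbert--Schmidt norm of $W_{u,\varphi}$, carried out purely inside $H^2$, and its identification with the integral in $(ii)$. No further use of the $\HH(b)$ structure is needed.

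First I would take the standard orthonormal basis $(e_n)_{n\geq 0}$ of $H^2$ given by $e_n(z)=z^n$ and use that $\|T\|_{\mathrm{HS}}^2=\sum_{n\geq 0}\|Te_n\|_2^2$ for any operator $T$ on $H^2$. Since $W_{u,\varphi}(e_n)=u\,(e_n\circ\varphi)=u\,\varphi^n$ and each $\varphi^n\in H^\infty$, passing to non-tangential boundary values gives
\[
\|W_{u,\varphi}\|_{\mathrm{HS}}^2=\sum_{n\geq 0}\|u\,\varphi^n\|_2^2=\sum_{n\geq 0}\int_\T |u(\xi)|^2\,|\varphi(\xi)|^{2n}\,dm(\xi).
\]
All the terms being nonnegative, Tonelli's theorem lets me interchange summation and integration with no integrability hypothesis, and summing the geometric series $\sum_{n\geq 0}|\varphi(\xi)|^{2n}=(1-|\varphi(\xi)|^2)^{-1}$ pointwise a.e.\ on $\T$ (with value $+\infty$ on the set, of possibly positive measure, where $|\varphi(\xi)|=1$) yields
\[
\|W_{u,\varphi}\|_{\mathrm{HS}}^2=\int_\T \frac{|u(\xi)|^2}{1-|\varphi(\xi)|^2}\,dm(\xi).
\]
Thus $W_{u,\varphi}$ is Hilbert--Schmidt precisely when this integral is finite, and combining with Theorem~\ref{thm 0.4.0.27} gives $(i)\Leftrightarrow(ii)$.

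The only point needing care is the identity $\|u\,\varphi^n\|_2^2=\int_\T|u|^2|\varphi|^{2n}\,dm$, i.e.\ the fact that the $H^2$ norm of $u\,\varphi^n$ is computed by the $L^2(\T)$ norm of its boundary function. When $u\in H^2$ this is the usual description of the $H^2$ norm, and $u\in H^2$ is automatic in case $(i)$: a Hilbert--Schmidt operator is bounded, so Lemma~\ref{lemme 0.4.0.17} forces $u\in H^2$ as soon as $C_\varphi$ is bounded. For the converse, if the integral in $(ii)$ is finite then, since $1-|\varphi|^2\leq 1$ gives $|u|^2\leq |u|^2/(1-|\varphi|^2)$ on $\T$, the boundary function of $u$ lies in $L^2(\T)$; as $u$ is the quotient $h/a_1$ of the $H^\infty$ function $h=(a_1\circ\varphi)\prod_{j=p+1}^n(\varphi-\varphi(\xi_j))^{m_j}$ by the polynomial $a_1$, it belongs to the Smirnov class, and Smirnov's maximum principle upgrades this to $u\in H^2$, so that the displayed computation applies verbatim. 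I therefore expect this bookkeeping of boundary values, rather than any genuine analytic difficulty, to be the only delicate point, the substance of the argument having been absorbed into Theorem~\ref{thm 0.4.0.27}.
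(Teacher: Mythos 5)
Your proof is correct and is essentially the paper's own argument: reduce via Theorem~\ref{thm 0.4.0.27} to the Hilbert--Schmidt property of $W_{u,\varphi}$ on $H^2$, expand $\sum_{n\geq 0}\|u\,\varphi^n\|_2^2$ over the orthonormal basis $(z^n)_{n\geq 0}$, and interchange sum and integral by monotone convergence (Tonelli) to obtain $\int_\T |u|^2(1-|\varphi|^2)^{-1}\,dm$. Your additional bookkeeping that $u\in H^2$ (via Lemma~\ref{lemme 0.4.0.17} in one direction, and via the Smirnov class together with Smirnov's maximum principle in the other) fills a point the paper passes over silently, but it refines rather than changes the argument.
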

\begin{proof}
Since $(z^n)_{n \geq 0}$ is an orthonormal basis of $H^2$, the operator $W_{u , \varphi}: H^2 \to H^2$ is Hilbert--Schmidt if and only if 
\[
\sum_{n=0}^{ \infty} \|W_{u , \varphi} z^n\|_2^2 < \infty .
\]
But observe that
\[
\sum_{n = 0}^{ \infty} \|W_{u , \varphi} z^n\|_2^2 = \sum_{n = 0}^{ \infty} \|u \varphi^n\|_2^2 = \sum_{n = 0}^{ \infty} \int_{ \mathbb{T}} |u ( \xi)|^2 | \varphi ( \xi)|^{2 n} dm ( \xi) .
\]
By the monotone convergence theorem, we get
\begin{align*}
\sum_{n=0}^{ \infty} \|W_{u, \varphi} z^n \|_2^2 &= \int_{ \mathbb{T}} |u( \xi)|^2 \sum_{n=0}^{ \infty} | \varphi ( \xi)|^{2n} dm ( \xi)\\
&= \int_{ \mathbb{T}} \frac{|u( \xi)|^2}{1 - | \varphi ( \xi)|^2} dm ( \xi).
\end{align*}
The conclusion follows now directly from Theorem~\ref{thm 0.4.0.27}. 
\end{proof}

\subsection{Some necessary/sufficient conditions for compactness}
We have an analogue of Corollary~\ref{Corollary-Gallardo-Partington} for the compactness.
\begin{Corollary}
Let $ \varphi\in\HH(b)\cap \operatorname{ball}(H^{\infty})$ and assume that $\varphi$ satisfies \eqref{eqa} and \eqref{eqb}. Let $u$ be the function defined by \eqref{defn-u} and assume that $u\in H^2$. Assume also that 
\begin{equation}\label{eq:equation-c-delta1}
\supess_{z\in A_\delta}|u(z)|\to 0, \quad \mbox{as }\delta\to 0,
\end{equation}
where $A_\delta:=\{\zeta\in\mathbb T:|\varphi(z)|\geq 1-\delta\}$. Then the operator $C_\varphi$ is compact on $\HH(b)$. 
\end{Corollary}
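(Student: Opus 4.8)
The plan is to follow the same route as the proof of Corollary~\ref{Corollary-Gallardo-Partington}, but now aiming at the \emph{vanishing} Carleson condition. By Theorem~\ref{thm 0.4.0.27}, $C_\varphi$ is compact on $\HH(b)$ if and only if $W_{u,\varphi}$ is compact on $H^2$, and by Corollary~\ref{cor 0.4.0.27} this is in turn equivalent to
$$I(w):=\int_\T \frac{(1-|w|^2)\,|u(\xi)|^2}{|1-\overline{w}\,\varphi(\xi)|^2}\,dm(\xi)\longrightarrow 0,\qquad\text{as } |w|\to 1.$$
So it suffices to verify this limit, which I would do by splitting the integral over $A_\delta$ and its complement, exactly in the spirit of the proof of Corollary~\ref{cor 0.4.0.22}.

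On $\T\setminus A_\delta$ one has $|\varphi(\xi)|<1-\delta$, hence $|1-\overline{w}\varphi(\xi)|\ge 1-|w|(1-\delta)>\delta$ for every $w\in\D$; since $u\in H^2$, the contribution of this part is at most $\delta^{-2}(1-|w|^2)\|u\|_2^2$, which tends to $0$ as $|w|\to 1$ with $\delta$ held fixed. On $A_\delta$ I would bound $|u(\xi)|$ by $M_\delta:=\supess_{z\in A_\delta}|u(z)|$ and then use the uniform estimate
$$\int_\T \frac{1-|w|^2}{|1-\overline{w}\varphi(\xi)|^2}\,dm(\xi)=(1-|w|^2)\,\|C_\varphi k_w\|_2^2\le\|C_\varphi\|^2,$$
which holds because $C_\varphi$ is bounded on $H^2$ by the Littlewood subordination principle and $\|k_w\|_2^2=(1-|w|^2)^{-1}$; this bounds the contribution of $A_\delta$ by $M_\delta^2\|C_\varphi\|^2$, independently of $w$.

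Combining the two parts gives $I(w)\le \delta^{-2}(1-|w|^2)\|u\|_2^2+M_\delta^2\|C_\varphi\|^2$. The conclusion then follows by the usual two-step choice: given $\varepsilon>0$, hypothesis \eqref{eq:equation-c-delta1} allows me to first fix $\delta$ so small that $M_\delta^2\|C_\varphi\|^2<\varepsilon/2$, after which I let $|w|\to 1$ to make the first term $<\varepsilon/2$. The only point requiring care is precisely this order of quantifiers, namely that $\delta$ must be selected before $w$, which is exactly what the hypothesis $M_\delta\to 0$ as $\delta\to 0$ is designed to supply. Alternatively, one could bypass the explicit estimate and simply invoke the compactness counterpart of \cite[Theorem~2.9]{MR2672342} for $W_{u,\varphi}$, mirroring what was done for boundedness in Corollary~\ref{Corollary-Gallardo-Partington}.
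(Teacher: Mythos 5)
Your proof is correct, but its main line of argument is genuinely more self-contained than the paper's. The paper's own proof occupies two sentences: it reduces compactness of $C_\varphi$ on $\HH(b)$ to compactness of $W_{u,\varphi}$ on $H^2$ via Theorem~\ref{thm 0.4.0.27}, and then simply cites the compactness part of \cite[Theorem 2.9]{MR2672342} --- exactly the alternative you sketch in your closing sentence. Your primary argument instead verifies condition (iii) of Corollary~\ref{cor 0.4.0.27} by hand, and every step checks out: off $A_\delta$ you have $|1-\overline{w}\varphi(\xi)|\geq 1-|w|(1-\delta)>\delta$, so that piece is at most $\delta^{-2}(1-|w|^2)\|u\|_2^2$ (this is where $u\in H^2$ enters); on $A_\delta$ the bound $M_\delta^2(1-|w|^2)\|C_\varphi k_w\|_2^2\leq M_\delta^2\|C_\varphi\|_{\mathcal L(H^2)}^2$, with $M_\delta:=\supess_{A_\delta}|u|$, follows from Littlewood subordination and $\|k_w\|_2^2=(1-|w|^2)^{-1}$; and your order of quantifiers (fix $\delta$ via the hypothesis $M_\delta\to 0$, then let $|w|\to 1$) is the right one. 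This splitting of $\T$ according to where $|\varphi|$ is close to $1$ mirrors the technique the paper itself uses in the proofs of Corollary~\ref{cor 0.4.0.22} and Theorem~\ref{thm 3.39}, rather than the external citation. What each approach buys: the paper's version is shorter and delegates the analysis to the cited reference; yours makes the corollary independent of that reference --- in effect reproving the special case of \cite[Theorem 2.9]{MR2672342} that is needed --- and makes transparent exactly where the two hypotheses $u\in H^2$ and \eqref{eq:equation-c-delta1} are used.
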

\begin{proof}
According to Theorem~\ref{thm 0.4.0.27}, the operator $C_\varphi$ is compact on $\HH(b)$ if and only if the operator $W_{u,\varphi}$ is compact on $H^2$. Now the conclusion follows applying \cite[Theorem 2.9]{MR2672342}.
\end{proof}

When we have a compact operator, we can slightly improve the conclusion of Corollary~\ref{cor 0.4.0.21}. 
\begin{Corollary}\label{cor 3.32}
Let $ \varphi\in\HH(b)\cap \operatorname{ball}(H^{\infty})$ satisfying \eqref{eqa} and \eqref{eqb}, and for $1\leq k\leq p$, let $1\leq \ell_k\leq n$ such that $\varphi(\xi_k)=\xi_{\ell_k}$. Assume that the operator $C_\varphi$ is compact on $\HH(b)$. Then for every $1\leq k\leq p$, we have $m_k<m_{\ell_k}$. In particular, the $\xi_k$'s cannot be fixed points for $\varphi$. 
\end{Corollary}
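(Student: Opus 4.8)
The plan is to reuse the quantitative inequality \eqref{eq:ADC-composition-borne-48343} established in the proof of Corollary~\ref{cor 0.4.0.21}, but this time exploit the \emph{vanishing} Carleson condition that compactness provides, rather than mere boundedness. Recall that the boundedness argument started from the operator inequality $\|W_{u,\varphi}^*g\|_2\le C\|g\|_2$ applied to reproducing kernels $g=k_\lambda$, which gave \eqref{eq1:cor-04-0-21}. When $C_\varphi$ is compact, Theorem~\ref{thm 0.4.0.27} tells us $W_{u,\varphi}$ is compact on $H^2$, and hence so is $W_{u,\varphi}^*$. The key upgrade is that a compact operator sends the weakly-null normalized sequence $k_\lambda/\|k_\lambda\|_2$ to a norm-null sequence as $|\lambda|\to 1$. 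Concretely, since $k_\lambda/\|k_\lambda\|_2\to 0$ weakly as $|\lambda|\to1$, compactness of $W_{u,\varphi}^*$ yields
\[
\frac{\|W_{u,\varphi}^*k_\lambda\|_2}{\|k_\lambda\|_2}=\frac{|u(\lambda)|\,\|k_{\varphi(\lambda)}\|_2}{\|k_\lambda\|_2}\longrightarrow 0,\qquad |\lambda|\to1.
\]

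First I would record this vanishing statement precisely and specialize it to $\lambda=r\xi_k$ with $r\to1^-$, for a fixed $1\le k\le p$. This replaces the bounded right-hand side $C^2$ in \eqref{eq1:cor-04-0-21} by a quantity tending to $0$; equivalently, writing $\ell=\ell_k$ so that $\varphi(\xi_k)=\xi_\ell$, the inequality \eqref{eq:ADC-composition-borne-48343} becomes, after the same algebraic manipulations,
\[
\left(\frac{1-|\varphi(r\xi_k)|}{1-r}\right)^{2m_\ell-1}\le \varepsilon(r)\,\frac{c_{k,r}(1-r)^{2m_k-2m_\ell}}{\kappa_{\ell,r}\prod_{j=p+1}^n|\varphi(r\xi_k)-\lambda_j|^{2m_j}},
\]
where $\varepsilon(r)\to0$ as $r\to1$. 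As in the proof of Corollary~\ref{cor 0.4.0.21}, the fraction multiplying $\varepsilon(r)(1-r)^{2m_k-2m_\ell}$ converges to the finite nonzero constant $C_{k,\ell}$.

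Next I would split into the two cases $m_k\le m_\ell$ exactly as before. Corollary~\ref{cor 0.4.0.21} already guarantees $m_k\le m_\ell$, so it remains only to rule out equality. Suppose toward a contradiction that $m_k=m_\ell$. Then the factor $(1-r)^{2m_k-2m_\ell}=1$, and the displayed inequality reads
\[
\left(\frac{1-|\varphi(r\xi_k)|}{1-r}\right)^{2m_\ell-1}\le \varepsilon(r)\cdot\frac{c_{k,r}}{\kappa_{\ell,r}\prod_{j=p+1}^n|\varphi(r\xi_k)-\lambda_j|^{2m_j}},
\]
whose right-hand side tends to $0\cdot C_{k,\ell}=0$ as $r\to1$. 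Since $2m_\ell-1\ge1>0$, this forces
\[
\liminf_{r\to1}\frac{1-|\varphi(r\xi_k)|}{1-r}=0,
\]
which contradicts Carath\'eodory's theorem (the liminf equals $|\varphi'(\xi_k)|>0$ whenever $\varphi$ has an ADC at $\xi_k$, and $|\varphi(\xi_k)|=1$ here because $\varphi(\xi_k)=\xi_\ell\in\T$; in any case the quotient is bounded below by a positive constant). Hence $m_k<m_\ell$, and in particular $m_k\ne m_{\ell_k}$ rules out $\ell_k=k$, so $\xi_k$ cannot be a fixed point of $\varphi$.

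I expect the only delicate point to be the clean justification that compactness of $W_{u,\varphi}^*$ produces the vanishing of $|u(\lambda)|\,\|k_{\varphi(\lambda)}\|_2/\|k_\lambda\|_2$ as $|\lambda|\to1$: one must verify that $k_\lambda/\|k_\lambda\|_2\to0$ weakly, which follows since $\langle k_\lambda/\|k_\lambda\|_2,h\rangle_2=(1-|\lambda|^2)^{1/2}h(\lambda)$ tends to $0$ for each $h\in H^2$. Everything after that reuses the computation already carried out for Corollary~\ref{cor 0.4.0.21} verbatim, with the constant $C^2$ replaced by the null sequence $\varepsilon(r)$.
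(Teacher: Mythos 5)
Your proposal is correct and follows essentially the same route as the paper's proof: both transfer compactness to $W_{u,\varphi}$ via Theorem~\ref{thm 0.4.0.27}, apply $W_{u,\varphi}^*$ to the weakly-null normalized kernels $k_{r\xi_k}/\|k_{r\xi_k}\|_2$ to upgrade the bound $C^2$ in \eqref{eq1:cor-04-0-21} to a quantity tending to $0$, reuse the algebraic manipulations behind \eqref{eq:ADC-composition-borne-48343} in the case $m_k=m_{\ell_k}$, and derive a contradiction with Carath\'eodory's theorem. The only difference is organizational (the paper assumes $m_{\ell_k}\le m_k$ and invokes Corollary~\ref{cor 0.4.0.21} to reduce to equality, while you start from $m_k\le m_{\ell_k}$ and rule out equality), which is immaterial.
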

\begin{proof}
Argue by absurd and assume that $C_{ \varphi}$ is compact on $ \HH(b)$ but for some $1\leq k\leq p$, we have $m_{\ell_k}\leq m_k$. For simplicity, we denote $\ell=\ell_k$. On one hand, according to Corollary~\ref{cor 0.4.0.21}, we should have $m_\ell=m_k$ (for this fixed $k$). On the other hand, it follows from Theorem~\ref{thm 0.4.0.27} that the operator $W_{u, \varphi}$ is compact on $H^2$. Hence its adjoint $W_{u, \varphi}^*$ is also compact on $H^2$. Recall now that for every $\lambda\in\D$, we have
\[
W_{u, \varphi}^* k_{ \lambda} = \overline{ u ( \lambda)} k_{ \varphi ( \lambda)},
\]
where $k_\lambda$ is the reproducing kernel of $H^2$ at point $\lambda$, and it is well--known that
\[ 
\frac{k_{z}}{\|k_{z}\|_2} \to 0, \mbox{ weakly in $H^2$,}\qquad \mbox{as }|z|\to 1.
\]
Thus, by compactness, we get
\[
\frac{\|W_{u, \varphi}^* k_{r \xi_k}\|_2}{\|k_{r \xi_k}\|_2} \to 0,\qquad \mbox{as }r\to 1,
\]
from which it follows that
\begin{equation}\label{eq:ddfdsfsdsds422}
|u( r \xi_k)|^2  \frac{1 - r^2}{1 - | \varphi (r \xi_k)|^2} \to 0, \qquad \mbox{as }r\to 1.
\end{equation}
Using the same arguments as in the proof of Corollary \ref{cor 0.4.0.21}, it is easy to check that
\begin{align*}
\left( \frac{  1 - | \varphi(r \xi_k)  |}{1  - r} \right)^{2m_l -1} &\leq 2 C_r  |u( r \xi_k)|^2 \frac{ 1 - r^2}{1  - | \varphi (r \xi_k)|^2} (1-r)^{2m_k-2m_\ell}\\
&=2 C_r  |u( r \xi_k)|^2 \frac{ 1 - r^2}{1  - | \varphi (r \xi_k)|^2},
\end{align*}
where 
\[
C_r =  \frac{ \prod_{i=1, i \neq k}^n |r  \xi_k - \xi_i |^{2 m_i} }{ \prod_{i = 1, i \neq l}^n| \varphi(r  \xi_k) - \xi_i |^{2 m_i}} \frac{1}{ \prod_{j = p+1}^n | \varphi (r \xi_k) - \lambda_j|^{2 m_j}}.
\]
We deduce from \eqref{eq:ddfdsfsdsds422} that 
\[
 \lim_{ r \to 1}\frac{  1 - | \varphi(r \xi_k)  |}{1  - r} = 0 .
\]
But this is in contradiction with  Carath\'eodory's theorem. 
\end{proof}

\begin{Corollary}\label{cor 3.33}
Let $b$ be a rational function in $\operatorname{ball}(H^{\infty})$ (but not a finite Blaschke product) and assume that the zeros $\xi_j$, $1\leq j\leq n$, on $\T$ of its pythagorean mate $a$ all have the same multiplicities. If $C_\varphi$ is compact on $\HH(b)$, then for all $1\leq j\leq n$, we have $\varphi(\xi_j)\in\D$. 
\end{Corollary}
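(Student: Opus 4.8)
The plan is to deduce this as a quick combinatorial consequence of Corollary~\ref{cor 3.32}, exploiting the fact that the inequality there is \emph{strict}.

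First I would record that since $C_\varphi$ is compact on $\HH(b)$, it is in particular bounded. By Lemma~\ref{lemme 4.2.1.1} this forces $\varphi\in\HH(b)\cap\operatorname{ball}(H^\infty)$, and by Theorem~\ref{lemme 4.0.4.0} we have $\varphi(\xi_j)\in\D\cup Z_\T(a)$ for every $1\leq j\leq n$. After relabeling the zeros, $\varphi$ then satisfies conditions \eqref{eqa} and \eqref{eqb} for some integer $0\leq p\leq n$. The desired conclusion, that $\varphi(\xi_j)\in\D$ for all $j$, is precisely the statement that $p=0$; so it suffices to rule out the case $p\geq 1$.

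Next I would argue by contradiction. Suppose $p\geq 1$ and fix any index $k$ with $1\leq k\leq p$. By \eqref{eqa}, $\varphi(\xi_k)=\xi_{\ell_k}\in Z_\T(a)$ for some $1\leq\ell_k\leq n$. Since $C_\varphi$ is compact, Corollary~\ref{cor 3.32} gives the strict inequality $m_k<m_{\ell_k}$. But the standing hypothesis is that all multiplicities $m_1,\dots,m_n$ are equal, so $m_k=m_{\ell_k}$, which contradicts $m_k<m_{\ell_k}$. Hence no index $k$ with $1\leq k\leq p$ can exist, that is $p=0$, and therefore $\varphi(\xi_j)\in\D$ for every $1\leq j\leq n$.

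I do not expect any genuine obstacle here: all the analytic work—the comparison of multiplicities via the angular-derivative estimate obtained through the adjoint of $W_{u,\varphi}$ acting on normalized reproducing kernels—has already been carried out in Corollary~\ref{cor 3.32}. The only point worth emphasizing is the role of strictness: under mere boundedness, Corollary~\ref{cor 0.4.0.21} yields only the non-strict bound $m_k\leq m_{\ell_k}$, which is compatible with equal multiplicities; it is the \emph{strict} improvement $m_k<m_{\ell_k}$ available under compactness that makes equal multiplicities incompatible with $p\geq 1$ and thereby forces every $\varphi(\xi_j)$ into the open disk.
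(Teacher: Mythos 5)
Your proof is correct and follows essentially the same route as the paper: argue by contradiction, use Theorem~\ref{lemme 4.0.4.0} to see that a boundary value $\varphi(\xi_k)\in\T$ must land in $Z_\T(a)$, and then invoke the strict inequality $m_k<m_{\ell_k}$ from Corollary~\ref{cor 3.32}, which is incompatible with all multiplicities being equal. Your additional remarks on the preliminary reductions (Lemma~\ref{lemme 4.2.1.1} and the relabeling in \eqref{eqa}--\eqref{eqb}) and on the role of strictness versus the non-strict bound of Corollary~\ref{cor 0.4.0.21} are accurate but not a different argument.
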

\begin{proof}
Argue by absurd and assume on one hand that the operator $C_{ \varphi}$ is compact on $ \HH(b)$ and on the other hand that there exists some $1 \leq k \leq p$, such that $|\varphi (\xi_k )| = 1$. Then, by Theorem~\ref{lemme 4.0.4.0}, there is some $1\leq \ell_k\leq n$ such that $ \varphi (\xi_k)=\xi_{\ell_k}$. But, by our assumption, we have $m_k = m_{\ell_k}$, which gives a contradiction with Corollary~\ref{cor 3.32}.
\end{proof}

\begin{Remark}
Let $b(z)=(1+z)/2$. According to Corollary~\ref{cor 3.33}, if $\varphi\in\HH(b)\cap \operatorname{ball}(H^{\infty})$ and $\varphi(1)=1$, then the operator $C_\varphi$ is not compact on $\HH(b)$. In particular, we recover a result of D. Sarason and J.N. Silva \cite[Theorem 3.5]{MR1945292} obtained in the context of local Dirichlet space $\mathcal D(\delta_1)$.
\end{Remark}

Corollary~\ref{cor 3.33} might suggest that if a symbol $\varphi$ touches the boundary at some boundary zeros of $a$, then $C_\varphi$ cannot be compact. This is not true as the following example shows.
\begin{Example}
Let $a(z)=c(z-1)(z+1)^2$, $z\in\D$, where $c$ is some suitable constant such that $a\in\operatorname{ball}(H^{\infty})$ and let $b$ be its pythagorean mate. Let us now consider the symbol $\varphi(z)=-(1+z)/2$, $z\in\D$. Of course we have $\varphi(1)=-1$ and $\varphi(-1)=0$. Since $b$ is non-extreme, $\varphi\in\HH(b)\cap \operatorname{ball}(H^\infty)$. Now straightforward computations show that for every $\xi\in\T$, we have 
\[
\frac{|u(\xi)|^2}{1-|\varphi(\xi)|^2}\asymp 1,
\]
from which it follows by Corollary~\ref{thm 4.0.21.0} that the operator $C_\varphi$ is Hilbert--Schmidt on $\HH(b)$. 
\end{Example}

Corollary~\ref{thm 4.0.21.0} enables us to obtain the following  improvement of Lemma~\ref{lem-fonction-analytique-closed-unit-disk}.
\begin{Corollary}\label{cor-fonction-analytique-closed-unit-disk}
Let $b\in \operatorname{ball}(H^{\infty})$ be a rational function (but not a finite Blaschke product) and let $\varphi\in \HH(b)\cap \operatorname{ball}(H^{\infty})$ such that $\|\varphi\|_\infty<1$. Then $C_\varphi$ is Hilbert-Schmidt on $\HH(b)$. 
\end{Corollary}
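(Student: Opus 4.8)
The plan is to reduce everything to the Hilbert--Schmidt criterion of Corollary~\ref{thm 4.0.21.0}, whose hypotheses are met once we locate the boundary values $\varphi(\xi_j)$. Since $\varphi\in\HH(b)$, each non-tangential limit $\varphi(\xi_j)$ exists by \eqref{q}, and the assumption $\|\varphi\|_\infty<1$ forces $|\varphi(\xi_j)|\leq\|\varphi\|_\infty<1$, so $\varphi(\xi_j)\in\D$ for every $1\leq j\leq n$. Hence we are in the case $p=0$: condition \eqref{eqa} is void and \eqref{eqb} holds for all $j$. Consequently the function $u$ of \eqref{defn-u} takes the simple form
\[
u=\frac{(a_1\circ\varphi)\prod_{j=1}^n(\varphi-\varphi(\xi_j))^{m_j}}{a_1}.
\]

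The key observation is then that $u$ factors as $u=(a_1\circ\varphi)\,\psi$, where $\psi$ is precisely the function
\[
\psi(z)=\prod_{j=1}^n\left(\frac{\varphi(z)-\varphi(\xi_j)}{z-\xi_j}\right)^{m_j}
\]
appearing in Lemma~\ref{lemme 3.9}. Since $\varphi\in\HH(b)\cap H^\infty$, that lemma gives $\psi\in H^2$; and because $a_1$ is a polynomial and $\varphi\in H^\infty$, the factor $a_1\circ\varphi$ lies in $H^\infty$. Therefore $u=(a_1\circ\varphi)\,\psi$ is the product of an $H^\infty$ function and an $H^2$ function, so $u\in H^2$.

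Finally I would invoke Corollary~\ref{thm 4.0.21.0}. Because $|\varphi(\xi)|\leq\|\varphi\|_\infty<1$ for a.e.\ $\xi\in\T$, the denominator in the Hilbert--Schmidt integral is uniformly bounded below, namely $1-|\varphi(\xi)|^2\geq 1-\|\varphi\|_\infty^2>0$. Combining this with $u\in H^2$ yields
\[
\int_\T\frac{|u(\xi)|^2}{1-|\varphi(\xi)|^2}\,dm(\xi)\leq\frac{1}{1-\|\varphi\|_\infty^2}\int_\T|u(\xi)|^2\,dm(\xi)=\frac{\|u\|_2^2}{1-\|\varphi\|_\infty^2}<\infty.
\]
By the equivalence $(i)\Longleftrightarrow(ii)$ of Corollary~\ref{thm 4.0.21.0}, this shows that $C_\varphi$ is Hilbert--Schmidt on $\HH(b)$.

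There is no serious obstacle here; the argument is a direct application of the machinery already assembled. The only points requiring a little care are verifying that $\|\varphi\|_\infty<1$ forces $p=0$, so that the clean form of $u$ above is valid, and recognizing the factorization $u=(a_1\circ\varphi)\,\psi$ that lets Lemma~\ref{lemme 3.9} deliver $u\in H^2$ at no cost. Once $u\in H^2$ is in hand, the uniformly positive lower bound on $1-|\varphi|^2$ makes the Hilbert--Schmidt integral trivially finite.
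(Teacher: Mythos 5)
Your proof is correct and follows essentially the same route as the paper: identify that $\|\varphi\|_\infty<1$ forces $p=0$, factor $u=(a_1\circ\varphi)\,\psi$ with $\psi\in H^2$ by Lemma~\ref{lemme 3.9}, and bound the Hilbert--Schmidt integral of Corollary~\ref{thm 4.0.21.0} using $1-|\varphi(\xi)|^2\geq 1-\|\varphi\|_\infty^2>0$. No gaps to report.
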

\begin{proof}
By our assumption,  for every $1\leq j\leq n$, we have $\varphi(\xi_j)\in\D$ and the corresponding function $u$ is defined by
\[
u(z)=\prod_{j=1}^n (\varphi(z)-\xi_j)^{m_j}\prod_{j=1}^n \left(\frac{\varphi(z)-\varphi(\xi_j)}{z-\xi_j}\right)^{m_j}.
\]
Hence, according to Lemma~\ref{lemme 3.9}, this function $u$ belongs to $H^2$. It remains to observe that
\[
\int_\T \frac{|u(z)|^2}{1-|\varphi(\zeta)|^2}\,dm(\zeta)\leq \frac{\|u\|_2^2}{1-\|\varphi\|_\infty^2}<\infty,
\]
and Corollary~\ref{thm 4.0.21.0} implies that $C_\varphi$ is Hilbert--Schmidt on $\HH(b)$. 
\end{proof}

\begin{Remark}
It is not difficult to see that the proof of Theorem~\ref{thm 0.4.0.27} also shows that $C_\varphi$ is of trace--class on $\HH(b)$ if and only if $W_{u,\varphi}$ is of trace--class on $H^2$. Using now \cite[Theorem 2.8]{MR2672342}, we can deduce that when $\|\varphi\|_\infty<1$, the operator $C_\varphi$ is indeed trace--class on $\HH(b)$. 
\end{Remark}

A reinforcement of the condition of Corollary~\ref{cor 0.4.0.22} forces $C_\varphi$ to be compact on $\HH(b)$. 
\begin{Theorem}\label{thm 3.39} 
Let $\varphi\in \HH(b)\cap\operatorname{ball}(H^{\infty})$. Assume that for every $1\leq j\leq n$, we have
 \begin{equation}\label{eq 3.22}
 \underset{z \to \xi_j}{\limsup} | \varphi (z)| < 1,
 \end{equation}
and $ \overline{\varphi ( \mathbb{D})} \cap \mathbb{T} \subset \{\xi_j:1\leq j\leq n\}$. Then $C_{ \varphi}$ is compact on $ \HH(b)$.
\end{Theorem}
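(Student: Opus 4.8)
The plan is to derive the compactness of $C_\varphi$ on $\HH(b)$ from the sufficient condition for compactness of weighted composition operators on $H^2$, that is, from the compactness analogue of Corollary~\ref{Corollary-Gallardo-Partington} (equivalently, from \cite[Theorem 2.9]{MR2672342} combined with Theorem~\ref{thm 0.4.0.27}). Since hypothesis \eqref{eq 3.22} forces $|\varphi(\xi_j)|<1$ for every $j$, we are in the case $p=0$ of \eqref{eqa}--\eqref{eqb}, and the function $u$ of \eqref{defn-u} factors as $u=(a_1\circ\varphi)\,\psi$, where $\psi(z)=\prod_{j=1}^n\big((\varphi(z)-\varphi(\xi_j))/(z-\xi_j)\big)^{m_j}$. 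By Lemma~\ref{lemme 3.9} one has $\psi\in H^2$, and since $a_1\circ\varphi\in H^\infty$ it follows that $u\in H^2$. Thus the whole matter reduces to proving that
\[
\supess_{\zeta\in A_\delta}|u(\zeta)|\longrightarrow 0,\qquad\text{as }\delta\to 0,
\]
where $A_\delta=\{\zeta\in\T:|\varphi(\zeta)|\geq 1-\delta\}$.

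First I would extract two consequences of \eqref{eq 3.22}. For each $j$ there are $\rho>0$ and $L<1$ (uniform after taking a minimum and a maximum over $j$) with $|\varphi|\leq L$ on $V:=\bigcup_{j=1}^n\{\zeta\in\T:|\zeta-\xi_j|<\rho\}$, exactly as in the proof of Corollary~\ref{cor 0.4.0.22}. Consequently $A_\delta\cap V=\emptyset$ as soon as $\delta<1-L$. On the complementary set $\T\setminus V$ the denominators $|\zeta-\xi_j|$ are bounded below by $\rho$, so $\psi$ is bounded there, say $|\psi|\leq C_\psi$ on $\T\setminus V$; in particular $|\psi|\leq C_\psi$ on $A_\delta$ for all small $\delta$.

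Next I would invoke the range condition $\overline{\varphi(\D)}\cap\T\subset\{\xi_j:1\leq j\leq n\}$ to make $a_1\circ\varphi$ small on $A_\delta$. Fix $\epsilon>0$. The set $\overline{\varphi(\D)}\cap\{z\in\overline{\D}:\dist(z,\{\xi_j:1\leq j\leq n\})\geq\epsilon\}$ is compact and, by the range condition, disjoint from $\T$; hence it lies in $\{|z|\leq 1-\delta_0\}$ for some $\delta_0>0$. Since $\varphi(\zeta)\in\overline{\varphi(\D)}$ for a.e. $\zeta\in\T$, this gives, for $\delta<\delta_0$, the inclusion $A_\delta\subset\{\zeta:\dist(\varphi(\zeta),\{\xi_j:1\leq j\leq n\})<\epsilon\}$. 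As $a_1$ is a polynomial vanishing at each $\xi_j$, the modulus of continuity $\omega(\epsilon):=\sup\{|a_1(z)|:z\in\overline{\D},\ \dist(z,\{\xi_j:1\leq j\leq n\})\leq\epsilon\}$ tends to $0$ as $\epsilon\to 0$. Combining this with the previous paragraph, for $\delta<\min(\delta_0,1-L)$ one has, a.e. on $A_\delta$,
\[
|u|=|a_1\circ\varphi|\,|\psi|\leq \omega(\epsilon)\,C_\psi,
\]
so that $\limsup_{\delta\to 0}\supess_{A_\delta}|u|\leq C_\psi\,\omega(\epsilon)$ for every $\epsilon>0$, whence $\supess_{A_\delta}|u|\to 0$. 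Feeding this and $u\in H^2$ into the cited criterion yields that $W_{u,\varphi}$, and therefore $C_\varphi$, is compact.

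The main obstacle is precisely the last estimate on $A_\delta$, which hinges on a delicate cooperation between the two hypotheses: the range condition guarantees that $\varphi$ can approach $\T$ only near the points $\xi_j$, where the factor $a_1\circ\varphi$ becomes small, while \eqref{eq 3.22} is exactly what keeps $A_\delta$ away from the domain points $\xi_j$ at which the factor $\psi$ could be unbounded. Ensuring that both controls — smallness of $a_1\circ\varphi$ and boundedness of $\psi$ — hold on the \emph{same} set $A_\delta$ is the heart of the argument; the remaining steps are routine.
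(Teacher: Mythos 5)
Your proof is correct, but it takes a genuinely different route from the paper's. The paper verifies the integral criterion of Corollary~\ref{cor 0.4.0.27}(iii), showing $\int_{\T}(1-|w|^2)|u(\xi)|^2|1-\overline{w}\varphi(\xi)|^{-2}\,dm(\xi)\to 0$ as $|w|\to 1$ by a two-stage splitting: first $\T=V\cup(\T\setminus V)$ exactly as you do, and then, on $\T\setminus V$, a further splitting of the boundary into the set $W$ where $\varphi(\xi)$ is $\varepsilon$-close to some $\xi_j$ and its complement $E\setminus W$, on which it proves $L_\varepsilon:=\sup_{E\setminus W}|\varphi|<1$ by a sequential contradiction argument resting on the range condition. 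You instead verify the hypotheses of the paper's (unnamed) compactness analogue of Corollary~\ref{Corollary-Gallardo-Partington}, i.e.\ the Gallardo-Guti\'errez--Partington criterion: $u\in H^2$ (via the factorization $u=(a_1\circ\varphi)\psi$ and Lemma~\ref{lemme 3.9}, which the paper also uses) together with $\supess_{A_\delta}|u|\to 0$. The two geometric ingredients are the same in both arguments — hypothesis \eqref{eq 3.22} controls the region near the $\xi_j$'s in the domain, the range condition controls where $|\varphi|$ can approach $1$ — but your packaging is cleaner: your compact-set argument ($K_\epsilon$ disjoint from $\T$, hence inside $\{|z|\leq 1-\delta_0\}$) is essentially the contrapositive of the paper's $L_\varepsilon<1$ claim, obtained without extracting sequences, and you avoid all integral estimates, including the $\|C_\varphi\|_{\mathcal L(H^2)}$ bounds the paper needs on $W$. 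What the paper's route buys in exchange is independence from the essential-sup criterion of \cite[Theorem 2.9]{MR2672342}: it only needs the Carleson-measure characterization behind Corollary~\ref{cor 0.4.0.27}, whereas your argument delegates the hard analysis to that external theorem (which, to be fair, the paper itself states and uses elsewhere). Two minor points of hygiene, neither a gap: the inclusion $A_\delta\cap V=\emptyset$ and the inclusion $A_\delta\subset\{\zeta:\dist(\varphi(\zeta),\{\xi_j\})<\epsilon\}$ hold only up to sets of measure zero, which suffices since $\supess$ ignores null sets; and one should record that boundary values satisfy $\varphi(\zeta)\in\overline{\varphi(\D)}$ a.e.\ because radial limits of points of $\varphi(\D)$ lie in its closure — you do say this, and it is exactly the fact the paper also uses through its set $E$.
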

\begin{proof}
The assumption~\eqref{eq 3.22} implies that, for every $1\leq j\leq n$, we have $\lambda_j=\varphi(\xi_j)\in \mathbb{D}$. Hence, according to Corollary~\ref{cor 0.4.0.27}, the operator $C_\varphi$ is compact on $\HH(b)$ if and only if 
 \begin{equation}\label{eq111:compact-cas-disque}
 \lim_{|w| \rightarrow 1} \left(\int_{\mathbb{T}} \frac{(1 - |w|^2) |u ( \xi)|^2}{|1 - \overline{w} \varphi ( \xi)|}\,dm ( \xi)  \right) = 0,
 \end{equation}
where $u$ is defined by  $u(z) = \frac{a_1 ( \varphi (z))}{a_1 ( z)} \prod_{i= 1}^n ( \varphi (z) - \varphi ( \xi_i))^{m_i} .$

In order to prove \eqref{eq111:compact-cas-disque}, argue as in the proof of Corollary~\ref{cor 0.4.0.22} to see that there exists $0<L<1$ and $\delta>0$ such that if $V_{\xi_j}=\{\xi\in\T:|\xi-\xi_j|<\delta\}$, $1\leq j\leq n$ and $V=\bigcup_{j=1}^n V_{\xi_j}$, then for almost all $\xi\in V$, we have $|\varphi(\xi)|\leq L$. Thus it follows that, for every  $w \in \mathbb{D}$ and for almost all $\xi \in V$, we have  
\[
|1 - \overline{w} \varphi ( \xi)| \geq 1 - |w| | \varphi ( \xi)| \geq 1 - L > 0.
\]
Hence, for almost all $\xi\in V$, we obtain that
\[
\frac{|u( \xi)|^2}{|1 - \overline{w} \varphi ( \xi)|^2} \leq \frac{\|a_1 \circ \varphi\|_{ \infty}}{(1 -L)^2} |\psi( \xi )|^2 ,
\]
where 
\[
\psi(\xi) = \prod_{j=1}^n \left( \frac{ \varphi (\xi) - \varphi ( \xi_j)}{\xi - \xi_j} \right)^{m_j} , \xi \in \mathbb{T} .
\]
According to Lemma~\ref{lemme 3.9}, since $ \varphi \in \HH(b)$, we know that $\psi \in H^2$ and we deduce that
\[
\int_{V} \frac{(1 - |w|^2) |u( \xi)|^2 }{|1 - \overline{w} \varphi(\xi)|^2} dm ( \xi) \leq (1 - |w|^2) \frac{\|a_1 \circ \varphi\|_{ \infty}}{(1 -L)^2}\|\psi\|_2^2,
\]
from which it immediately follows that
\[
\lim_{|w| \to 1 } \left(\int_V \frac{(1 - |w|^2) |u( \xi)|^2}{|1 - \overline{w} \varphi ( \xi)|^2} dm ( \xi)  \right) = 0.
\]
In order to prove \eqref{eq111:compact-cas-disque}, it remains to check that
\begin{equation}\label{eq222:compact-cas-disque}
\lim_{|w| \to 1 } \left(\int_{ \mathbb{T} \setminus V} \frac{(1 - |w|^2) |u( \xi)|^2}{|1 - \overline{w} \varphi ( \xi)|^2} dm ( \xi)  \right) = 0. 
\end{equation}
To this purpose, observe that for $ \xi \in \mathbb{T} \setminus V$, we have for every $1\leq j\leq n$, $|\xi - \xi_j| \geq \delta$, whence 
\[
 |a_1 ( \xi)| = \prod_{j=1}^n |\xi - \xi_j|^{m_j} \geq { \delta}^N.
 \]
Moreover, for almost all $\xi \in \mathbb{T} \setminus V$, we have
\[
\prod_{j=1}^n | \varphi ( \xi) - \varphi ( \xi_j)|^{m_j} \leq 2^N,
\]
which gives that 
\[
|u( \xi)| \leq \left( \frac{2}{\delta} \right)^N |(a_1 \circ \varphi)( \xi)|.
\]
Hence, we obtain
\begin{align*}
\int_{\mathbb{T} \setminus V} (1 - |w|^2) \frac{| u (\xi)|^2}{|1 - \overline{w} \varphi ( \xi)|^2} dm( \xi) & \leq \left(\frac{2}{\delta} \right)^{2 N} \int_{ \mathbb{T} \setminus V }  \frac{ (1 - |w|^2)| a_1 ( \varphi ( \xi))|^2}{|1 - \overline{w} \varphi ( \xi)|^2}\,dm( \xi)\\
& \leq \left(\frac{2}{\delta} \right)^{2 N} \int_{ \mathbb{T} }  \frac{ (1 - |w|^2) \prod_{j=1}^n  | \varphi ( \xi) - \xi_j|^{2m_j}}{|1 - \overline{w} \varphi ( \xi)|^2}\,dm( \xi).
\end{align*}
In particular, in order to prove \eqref{eq222:compact-cas-disque}, it is sufficient to check that
\begin{equation}\label{eq 3.23}
\lim\limits_{|w| \to 1} \left( \int_{ \mathbb{T} }  \frac{ (1 - |w|^2) \prod_{j=1}^n  | \varphi ( \xi) - \xi_j|^{2m_j}}{|1 - \overline{w} \varphi ( \xi)|^2} dm( \xi) \right) = 0 . 
\end{equation}
Since $ \varphi \in \HH(b) \subset H^2$, the function $\varphi$ has radial limits at almost all points in $\mathbb{T}$.  Denote by 
\[
E= \left\{ \xi \in \mathbb{T} : \varphi ( \xi) = \lim\limits_{ r \to 1^-} \varphi( r \xi) \hspace{0.1 cm} \text{exists}  \right\}.
\]
We have $m( \mathbb{T} \setminus E) = 0 $ and then 
\[
\int_{ \mathbb{T}} \frac{ (1 - |w|^2) \prod_{j=1}^n  | \varphi ( \xi) - \xi_j|^{2m_j}}{|1 - \overline{w} \varphi ( \xi)|^2}\,dm( \xi) = \int_E \frac{ (1 - |w|^2) \prod_{j=1}^n  | \varphi ( \xi) - \xi_j|^{2m_j}}{|1 - \overline{w} \varphi ( \xi)|^2}\,dm( \xi).
\]
Fix now $0<\varepsilon<1$ and for $1 \leq j \leq n$, introduce the following subsets of $E$ defined by
 \[
W_j = \{ \xi \in E : | \varphi ( \xi) - \xi_j| \leq \varepsilon \} \quad\mbox{and}\quad W= \bigcup_{j=1}^n W_j.
\]
Note that for $ \xi \in W$, we have
$$ \prod_{j=1}^n | \varphi( \xi) - \xi_j|^{2 m_j} \leq \varepsilon^2 4^N ,$$
whence
 \begin{align*}
\int_W \frac{ (1 - |w|^2) \prod_{j=1}^n  | \varphi ( \xi) - \xi_j|^{2m_j}}{|1 - \overline{w} \varphi ( \xi)|^2} dm( \xi)  & \leq 4^N \varepsilon^2 \int_{ \mathbb{T}} \frac{(1 -|w|^2 )}{ |1 - \overline{w} \varphi ( \xi)|^2} dm( \xi)\\
&= 4^N \varepsilon^2 ( 1 - |w|^2) \|C_{ \varphi} k_w\|_2^2 \\
& \leq 4^N \varepsilon^2 \|C_{ \varphi}\|_{ \mathcal{L} (H^2)}^2.
\end{align*} 
For the integral on $E \setminus W$, observe that 
\[
L_{ \varepsilon} : = \sup_{  \xi \in E \setminus W} |\varphi ( \xi)| < 1.
\]
Indeed argue by absurd and assume that $L_{ \varepsilon} = 1$. Then there exists $(\zeta_k)_k \subset E \setminus W$ such that $  |\varphi ( \zeta_k)|\to 1$, as $k \to \infty$. By compactness, we may assume that $\varphi( \zeta_k)\to e^{i \theta}$, as $k\to\infty$, for some $\theta \in \mathbb{R}$. Hence, for every $\eta>0$, there exists $N\in\mathbb N$ such that 
 \[
k \geq N \implies | \varphi ( \zeta_k) - e^{i \theta}| < \frac{\eta}{2}.
\]
Since $ \zeta_k \in E$, there is  $0 < r_k < 1$ such that 
$$ | \varphi(r_k \zeta_k) - \varphi( \zeta_k)| < \frac{\eta}{2},$$
which gives that 
\[
k \geq N \implies |\varphi(r_k \zeta_k) - e^{i \theta}| \leq \eta.
\]
In particular, $e^{i \theta} \in \overline{\varphi ( \mathbb{D})} \cap \mathbb{T}$ and then our assumption implies that there exists $1 \leq j \leq n$ such that $e^{ i \theta} = \xi_j$.
Therefore, $\varphi ( \zeta_k ) \to \xi_j$, as $k\to\infty$, which contradicts the fact that $ \zeta_k \in E \setminus W$. Thus, $L_{ \varepsilon} < 1$ and for every $\xi \in E \setminus W$, we have 
\[
\frac{ (1 - |w|^2) \prod_{j=1}^n  | \varphi ( \xi) - \xi_j|^{2m_j}}{|1 - \overline{w} \varphi ( \xi)|^2}  \leq \frac{4^N}{(1- L_{ \varepsilon})^2} ( 1 - |w|^2 ).
\]
We then deduce that
\[
\int_{  E \setminus W}  \frac{ (1 - |w|^2) \prod_{j=1}^n  | \varphi ( \xi) - \xi_j|^{2m_j}}{|1 - \overline{w} \varphi ( \xi)|^2}\,dm( \xi)   \leq \frac{4^N}{(1- L_{ \varepsilon})^2} ( 1 - |w|^2 ).
\]
Now, one can choose $0 < r_0 < 1$ such that
\[ 
r_0 < |w| < 1 \implies  \frac{4^N}{(1- L_{ \varepsilon})^2} ( 1 - |w|^2 ) \leq \varepsilon^2,
\]
which gives  
\[
\int_{ \mathbb{T}}  \frac{ (1 - |w|^2) \prod_{j=1}^n  | \varphi ( \xi) - \xi_j|^{2m_j}}{|1 - \overline{w} \varphi ( \xi)|^2}\,dm( \xi)   \leq \left( 4^N \|C_{ \varphi}\|_{ \mathcal{L}(H^2)}^2 +1 \right) \varepsilon^2.
\]
Finally \eqref{eq 3.23} is proved, which concludes the proof. 
\end{proof}

\begin{Example}
Let $ b(z)=(1+z^2)/2$, $z \in \mathbb{D}$. Then, we easily check that its pythagorean mate is $a(z)=(1-z^2)/2$, $z \in \mathbb{D}$. For $\gamma > 1/2$, consider $\varphi(z)=\left(\frac{1 - z^2}{2} \right)^{ \gamma}$, $z \in \mathbb{D}$.
Observe that
\[ 
\frac{\varphi(z)}{a(z)} = \frac{1}{  \left( \frac{1-z^2}{2} \right)^{1 - \gamma}} , \qquad z \in \mathbb{D},
\]
and  since $\gamma>1/2$, we have $ \varphi \in a H^2 \subset \HH(b)$. Moreover, $ \underset{z \to \pm 1 }{\lim} \varphi (z) = 0 $. Finally, we see that 
\[
| \varphi(z)| = 1 \Leftrightarrow  \left|\frac{1- z^2}{2} \right| = 1 \Leftrightarrow z^2 = - 1 \Leftrightarrow z = \pm i.
\]
But $ \varphi ( \pm i) = 1$ and then $ \overline{ \varphi ( \mathbb{D})} \cap \mathbb{T} = \{1\}$. Thus, we can apply Theorem~\ref{thm 3.39} which implies that the operator $C_{ \varphi}$ is compact on $ \HH(b)$.

\end{Example}

\subsection{Hilbert--Schmidt property in some particular cases}
\begin{Corollary}\label{cor 3.35}
Let $b(z)=(1+z)/2$ and let $\varphi\in \HH(b)\cap \operatorname{ball}(H^{\infty})$. The following assertions are equivalent:
\begin{enumerate}
\item[(i)] $C_{ \varphi}$ is Hilbert-Schimdt on $ \HH(b)$.
\item[(ii)] $ \varphi(1) \in \mathbb{D}$ and
$$\int_{ \mathbb{T}} \frac{| \varphi ( \xi) - 1|^2 | \varphi (\xi) - \varphi (1)|^2}{| \xi - 1|^2 ( 1 - | \varphi ( \xi)|^2)}dm( \xi) < \infty .$$
\end{enumerate}
\end{Corollary}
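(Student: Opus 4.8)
The plan is to reduce everything to the Hilbert--Schmidt integral criterion of Corollary~\ref{thm 4.0.21.0} and to compute the associated weight $u$ explicitly. Since $b(z)=(1+z)/2$, its pythagorean mate is $a(z)=(1-z)/2$, so that $a$ has a single zero on $\T$, namely $\xi_1=1$, with multiplicity $m_1=1$; hence $n=1$, $N=1$ and $a_1(z)=z-1$. The whole argument then rests on distinguishing according to the position of the boundary value $\varphi(1)$, which by Theorem~\ref{lemme 4.0.4.0} can only lie in $\D\cup\{1\}$ once $C_\varphi$ is known to be bounded.

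For the implication $(i)\Rightarrow(ii)$, I would first note that if $C_\varphi$ is Hilbert--Schmidt then it is bounded, so Theorem~\ref{lemme 4.0.4.0} forces $\varphi(1)\in\D\cup\{1\}$. The case $\varphi(1)=1$ has to be excluded: here the (single) boundary zero of $a$ trivially shares ``the same multiplicity'', so Corollary~\ref{cor 3.33} applies and shows that $C_\varphi$ cannot even be compact, contradicting the Hilbert--Schmidt hypothesis. Thus $\varphi(1)\in\D$, which means $p=0$ in the notation of \eqref{eqa}--\eqref{eqb}, and \eqref{defn-u} reduces to
\[
u(z)=\frac{(a_1\circ\varphi)(z)\,(\varphi(z)-\varphi(1))}{a_1(z)}=\frac{(\varphi(z)-1)(\varphi(z)-\varphi(1))}{z-1}.
\]
Substituting this into the criterion of Corollary~\ref{thm 4.0.21.0} gives exactly the integral appearing in $(ii)$, so $(ii)$ holds.

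The converse $(ii)\Rightarrow(i)$ is the same computation read backwards: assuming $\varphi(1)\in\D$ we again have $p=0$ and the above explicit form of $u$, so that the finiteness of the integral in $(ii)$ is literally the statement that $\int_\T |u|^2/(1-|\varphi|^2)\,dm<\infty$. Corollary~\ref{thm 4.0.21.0} then yields that $C_\varphi$ is Hilbert--Schmidt. The only genuinely delicate point is the exclusion of $\varphi(1)=1$ in the first implication: this is where the hypothesis $b(z)=(1+z)/2$ (a single simple boundary zero) is used, via Corollary~\ref{cor 3.33}, to guarantee that a boundary-touching symbol cannot produce a compact---let alone Hilbert--Schmidt---operator. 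Everything else is routine bookkeeping of $u$.
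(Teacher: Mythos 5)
Your proof is correct and follows essentially the same route as the paper's: both reduce to the Hilbert--Schmidt criterion of Corollary~\ref{thm 4.0.21.0} with the explicit weight $u(z)=(\varphi(z)-1)(\varphi(z)-\varphi(1))/(z-1)$, and both exclude the case $\varphi(1)=1$ by invoking Corollary~\ref{cor 3.33} (the paper cites it directly, while you spell out the intermediate step through Theorem~\ref{lemme 4.0.4.0}, which is the same logical content).
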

\begin{proof}
On one hand, recall that when $b(z)=(1+z)/2$, then $a_1(z)=z-1$. On the other hand, observe that, according to Corollary~\ref{cor 3.33}, the condition $ \varphi(1) \in \mathbb{D}$ is necessary for the compactness of $C_{ \varphi}$ on $\HH(b)$. Now, it follows from Corollary~\ref{thm 4.0.21.0} that the operator $C_{ \varphi}$ is Hilbert--Schmidt  on $\HH(b)$ if and only if 
\[
\int_{ \mathbb{T}} \frac{|u( \xi)|^2}{1 - | \varphi ( \xi)|^2} dm ( \xi) < \infty,
\]
where  
\[
u(z) = \frac{a_1 ( \varphi (z))}{a_1 (z)} ( \varphi (z) - \varphi (1))=\frac{(\varphi(z)-1)(\varphi(z)-\varphi(1)}{z-1},
\]
which gives the result.
\end{proof}

\begin{Example}
Let $b(z)=(1+z)/2$, $z\in\D$. Then $a(z)=(1-z)/2$, $z\in\D$. It is known that the operator $C_a$ is not compact on $H^2$ (see \cite{MR1237406}), whereas, according to Corollary~\ref{cor 3.35}, the operator $C_a$ is Hilbert-Schmidt on $\HH(b)$. Indeed, on one hand, we have $a(1)=0$. On the other hand, easy computations show that 
\[
\frac{|a( \xi) - 1|^2 |a(\xi) - a(1)|^2}{| \xi - 1|^2 ( 1 - |a( \xi)|^2)}=\frac{1}{4},
\]
whence assertion (ii) of Corollary \ref{cor 3.35} is trivially satisfied. Thus the operator $C_a$ is Hilbert--Schmidt on $\HH(b)$. 
\end{Example}

\begin{Example}
Let $b(z)=(1-z^2)/2$, $z\in\D$. Then, it is easy to see that $a(z)=(1+z^2)/2$, $z\in\D$, and thus $a_1(z)=z^2+1=(z-i)(z+i)$, $z\in\D$. According to Corollary~\ref{cor 0.4.0.22}, observe that $C_a$ is bounded on $\HH(b)$, because 
\[
\limsup_{z\to\pm i}|a(z)|=|a(\pm i)|=0.
\]
Now straightforward computations show that 
\[
\frac{|u(\xi)|^2}{1-|a(\xi)|^2}=\frac{|u(\xi)|^2}{|b(\xi)|^2}\asymp \frac{|1+a(\xi)^2|^2|1+\xi^2|^2}{|1-\xi^2|^2}.
\]
In a neighborhood of $1$, we then have
\[
\frac{|u(\xi)|^2}{1-|a(\xi)|^2}\asymp \frac{1}{|1-\zeta|^2},
\]
from which it follows that 
\[
\int_{\T}\frac{|u(\xi)|^2}{1-|a(\xi)|^2}\,dm(\xi)=\infty.
\]
Hence, according to Corollary~\ref{thm 4.0.21.0} the operator $C_a$ is not Hilbert--Schmidt on $\HH(b)$. 
\end{Example}

The previous examples motivate the following question.
\begin{Question}\label{question2}
Let $b\in \operatorname{ball}(H^{\infty})$ be a rational function (but not a finite Blaschke product), and let $a$ be its pythagorean mate. Is $C_a$ compact on $\HH(b)$? 
\end{Question}
Observe that according to Corollary~\ref{cor 0.4.0.22}, $C_a$ is at least always bounded on $\HH(b)$ because, for all $1\leq j\leq n$, we have
\[
\limsup_{z\to\xi_j}|a(z)|=|a(\xi_j)|=0.
\]

\bibliographystyle{plain}

\bibliography{bibliographie}

\end{document}